\newtheorem{theorem}{Theorem}[section]
\newtheorem{proposition}[theorem]{Proposition}
\newtheorem{lemma}[theorem]{Lemma}
\newtheorem{corollary}[theorem]{Corollary}
\theoremstyle{definition}
\newtheorem{remark}[theorem]{Remark}
\newcounter{RomanNumber}
\newcommand{\testcap}{\mathrel{\vcenter{\offinterlineskip
\hbox{$\cap$}\vskip-1.8ex\hbox{$\kern0.2em | \kern0.15em$}}}}
\newcounter{bean}
\newcommand{\nameddright}[5]{\ensuremath{#1\stackrel{#2}
 {\larrow}#3\stackrel{#4}{\larrow}#5}}
\newcommand{\larrow}{\relbar\!\!\relbar\!\!\rightarrow}
\newcommand{\llarrow}{\relbar\!\!\relbar\!\!\larrow}
\newcommand{\lllarrow}{\relbar\!\!\relbar\!\!\llarrow}
\newcommand{\llnameddright}[5]{\ensuremath{#1\stackrel{#2}
 {\llarrow}#3\stackrel{#4}{\llarrow}#5}}
\newcommand{\qqed}{\hfill\Box}
\begin{document}


\title{Homotopy of Simply Connected Complexes with a Spherical Pair} 

\author{Ruizhi Huang} 
\address{State Key Laboratory of Mathematical Sciences \& Institute of Mathematics, Academy of Mathematics and Systems Science, 
   Chinese Academy of Sciences, Beijing 100190, China} 
\email{huangrz@amss.ac.cn} 
   \urladdr{https://sites.google.com/site/hrzsea/}

\subjclass[2010]{Primary 
55Q52, 
55P35, 
Secondary 
55P62; 
57N65; 
}
\keywords{homotopy groups, loop spaces, hyperbolic, inert maps}


\begin{abstract} 
We establish a loop space decomposition for certain $CW$-complexes with a single top cell in the presence of a spherical pair, thereby generalizing several known decompositions of Poincar\'{e} duality complexes in which a loop of a product of spheres appears as a direct summand.

This decomposition is further applied to derive results on local hyperbolicity, on inertness and non-inertness, on the gaps between rational inertness and local or integral inertness, and on the homotopy theory of smooth manifolds with transversally embedded spheres. 

In particular, in every dimension greater than three, there exist infinitely many finite $CW$-complexes, pairwise non-homotopy-equivalent, whose loop spaces retract off the loops of their lower skeletons rationally but not locally, and whose top cell attachments produce infinitely many new torsion homotopy groups with exponentially growing ranks.
\end{abstract}

\maketitle


\section{Introduction} 

The study of the homotopy theory of $CW$-complexes through their cohomological data is a classical problem in algebraic topology. In particular, significant recent progress has been made in understanding the homotopy theory of Poincar\'{e} duality complexes. For instance, loop space decompositions of $(n-1)$-connected $2n$-manifolds were investigated by Beben-Theriault \cite{BT14} and Sa. Basu-So. Basu \cite{BB18}, while those of $(n-1)$-connected $(2n+1)$-manifolds were studied by Beben-Wu \cite{BW15}, Basu \cite{Bas19}, Beben-Theriault \cite{BT22}, and Huang-Theriault \cite{HT22}. Moreover, particular $6$-manifolds were analyzed by Huang \cite{Hua23}, and a unified loop space decomposition for Poincar\'{e} duality complexes was established by Stanton-Theriault \cite{ST25} under reasonable conditions.

From the perspective of homotopy theory, many known cases of loop space decompositions of Poincar\'{e} duality complexes are derived from a good pair of cohomology classes that are Poincar\'{e} dual to one another. Such a pair can often be realized as a twisted or untwisted product of spheres, which retracts off the Poincar\'{e} duality complex after looping. From the perspective of geometric topology, Haibao Duan proposed studying the homotopy theory of smooth manifolds through the intersection of two embedded spheres based on a result of Huang \cite{Hua24}. 

In this paper, we develop these ideas in a broader setting and establish new loop space decompositions for more general $CW$-complexes. Specifically, we prove a loop space decomposition for the so-called capped complexes in the presence of a spherical pair, and apply it to obtain a loop space decomposition for manifolds with two transversally embedded spheres. Furthermore, these loop space decompositions are used to derive results on local hyperbolicity, inertness and non-inertness, as well as on the gaps between rational inertness and local or integral inertness.

To present our main results, we first introduce the necessary notions and conventions. 
A simply connected $n$-dimensional $CW$-complex $Y$ is called a {\it capped complex} if it has a single $n$-cell and satisfies $H_{n}(Y;\mathbb{Z})\cong \mathbb{Z}$, or equivalently, if there exists a homotopy cofibration
$
S^{n-1}\stackrel{g}{\larrow} \overline{Y}\stackrel{}{\larrow} Y,
$
where $\overline{Y}$ denotes the $(n-1)$-skeleton of $Y$, and the attaching map $g$ induces the trivial homomorphism in homology.
For a capped complex $Y$, a generator of $H_{n}(Y;\mathbb{Z})\cong \mathbb{Z}$ is called the {\it fundamental class} of $Y$ and is denoted by $[Y]$.
Note that if a simply connected finite-dimensional $CW$-complex has its top nontrivial homology group isomorphic to the infinite cyclic group, then it is homotopy equivalent to a capped complex. In particular, every simply connected Poincar\'{e} duality complex is homotopy equivalent to a capped complex.

For pointed spaces $A$ and $B$, 
the \emph{right half-smash} is defined as the quotient space 
$A\rtimes B=(A\times B)/(\{\ast\} \times B)$. 
The {\it Moore space $P^n(k)$} is defined as the homotopy cofibre of the degree $k$ self-map of the sphere $S^{n-1}$. 
Note that $P^n(k)$ is contractible when $k=\pm 1$, and $P^n(k)\simeq S^{n-1}\vee S^{n}$ when $k=0$.
Denote by $[i_1, i_2]$ the Whitehead product of the canonical inclusions $S^m\stackrel{i_1}{\larrow} S^m\vee S^{n-m}$ and $S^{n-m}\stackrel{i_2}{\larrow} S^m\vee S^{n-m}$. It is known that the Whitehead product $[i_1, i_2]$ detects a pair of cohomology classes of $S^m\times S^{n-m}$ that are Poincar\'{e} dual to one another.

\begin{theorem}\label{Zdecom-thm}
Let $X$ be an $n$-dimensional simply connected capped complex determined by a homotopy cofibration
\[
S^{n-1}\stackrel{k\mathfrak{g}}{\larrow} S^{m}\vee S^{n-m}\vee C\stackrel{}{\larrow} X
\]
for a space $C$, $k\in \mathbb{Z}$ and $2\leq m\leq n-2$. Suppose that the component of the map $\mathfrak{g}$ on $S^m\vee S^{n-m}$ is the Whitehead product $[i_1, i_2]$. 
Then there is a homotopy fibration
\[
(P^{n}(k)\vee C)\rtimes \Omega (S^m\times S^{n-m})\stackrel{}{\larrow} X\stackrel{}{\larrow} S^m\times S^{n-m},
\]
which splits after looping to give a homotopy equivalence 
   \[
   \Omega X\simeq\Omega (S^m\times S^{n-m})\times\Omega \big((P^{n}(k)\vee C)\rtimes \Omega (S^m\times S^{n-m})\big). 
 \] 
\end{theorem}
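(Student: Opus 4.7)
The plan is to construct a map $p\colon X \to S^m \times S^{n-m}$, exhibit an auxiliary cofibration $P^n(k) \vee C \to X \to S^m \times S^{n-m}$ via an octahedral argument, and then apply a Theriault-style inertness lemma to produce both the fibration and the splitting after looping. To construct $p$, consider the composite
\[
q\colon S^m \vee S^{n-m} \vee C \xrightarrow{\mathrm{pinch}} S^m \vee S^{n-m} \xrightarrow{j} S^m \times S^{n-m},
\]
where $\mathrm{pinch}$ collapses $C$ and $j$ is the canonical inclusion. Since $j \circ [i_1, i_2]$ is null, $q \circ k\mathfrak{g}$ vanishes in $\pi_{n-1}(S^m \times S^{n-m})$, and $q$ extends over the cofibre to yield the desired $p$.

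For the auxiliary cofibration, first observe that collapsing $C$ gives a homotopy cofibration $C \hookrightarrow X \to X/C$ with $X/C \simeq (S^m \vee S^{n-m}) \cup_{k[i_1,i_2]} D^n$, since the $S^m \vee S^{n-m}$-component of $\mathfrak{g}$ is $[i_1,i_2]$. Applying the octahedral axiom to the factorization $S^{n-1} \xrightarrow{\mathrm{deg}\,k} S^{n-1} \xrightarrow{[i_1,i_2]} S^m \vee S^{n-m}$, the cofibres of $\mathrm{deg}\,k$, $[i_1,i_2]$, and their composite are $P^n(k)$, $S^m \times S^{n-m}$, and $X/C$ respectively, producing a homotopy cofibration
\[
P^n(k) \to X/C \to S^m \times S^{n-m}.
\]
Combining with $C \hookrightarrow X \to X/C$ yields the desired
\[
P^n(k) \vee C \to X \to S^m \times S^{n-m},
\]
where the left map is $\tilde{\mathfrak{g}} \vee \iota_C$ and $\tilde{\mathfrak{g}}\colon P^n(k) \to X$ extends $\mathfrak{g}$ along $S^{n-1} \hookrightarrow P^n(k)$ using the null-homotopy of $k\mathfrak{g}$ supplied by the top cell of $X$.

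The loop map $\Omega p$ then admits a right homotopy inverse: compose the canonical section $\Omega(S^m \times S^{n-m}) \to \Omega(S^m \vee S^{n-m})$ of $\Omega j$ (coming from loops of the wedge inclusions) with $\Omega$ of $S^m \vee S^{n-m} \hookrightarrow S^m \vee S^{n-m} \vee C \hookrightarrow X$. A Theriault-style inertness lemma applied to the auxiliary cofibration now delivers both the homotopy fibration
\[
(P^n(k) \vee C) \rtimes \Omega(S^m \times S^{n-m}) \to X \xrightarrow{p} S^m \times S^{n-m}
\]
and its splitting after looping.

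The main obstacle will be in aligning the two pieces of the cofibration, specifically ensuring that the lift $\tilde{\mathfrak{g}}$ to $X$ exists and produces the correct cofibre. I would handle this by choosing the null-homotopy of $k\mathfrak{g}$ in $X$ to refine, modulo $C$, the canonical null-homotopy of $k[i_1,i_2]$ in $X/C$ given by the top cell, and then performing a three-by-three/octahedral diagram chase to confirm that the cofibre of $P^n(k) \vee C \to X$ is exactly $S^m \times S^{n-m}$.
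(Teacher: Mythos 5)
Your proposal follows essentially the same route as the paper: your $X/C$ is the paper's $Q$ from Diagram~\eqref{CXQdiag}, your octahedral cofibration $P^n(k)\to X/C\to S^m\times S^{n-m}$ is Lemma~\ref{Qlemma}, your Hilton--Milnor section of $\Omega p$ is Lemma~\ref{rlemma2}, and the ``Theriault-style inertness lemma'' you invoke is exactly Theorem~\ref{BT2} of Beben--Theriault. The one point where your method diverges is in establishing that the cofibre of $\kappa\colon P^n(k)\vee C\to X$ is $S^m\times S^{n-m}$: you propose the octahedral/three-by-three chase that you yourself flag as the main obstacle (and which does go through, e.g.\ by applying the octahedral axiom to $C\hookrightarrow P^n(k)\vee C\xrightarrow{\kappa} X$ and identifying the induced map $P^n(k)\to Q$ with $\gamma$ via a section of the pinch), whereas the paper's Lemma~\ref{cofib-lemma} sidesteps the coherence of null-homotopies entirely by computing $H_*$ of the cofibre, reading off a mapping degree, and applying the Whitehead theorem.
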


This loop space decomposition also satisfies a naturality property, which is discussed in detail in Remark~\ref{natremark}. Theorem \ref{Zdecom-thm} may be regarded as a generalization of several known decomposition results for Poincar\'{e} duality complexes admitting a good pair of cohomology classes that can be realized as a twisted or untwisted product of spheres \cite{BT14, BW15, BT22}, and it further applies to the study of the homotopy theory of manifolds through intersections of embedded spheres; see Theorem \ref{int-thm} below.

From a homotopy-theoretic perspective, a distinctive feature of Theorem \ref{Zdecom-thm} is the appearance of a summand $P^{n}(k)$ contributing to the homotopy of $X$, which does not arise directly from the cell structure of the lower skeleton $\overline{X}$. This additional piece leads to notable applications, including results on torsion in the homotopy groups of $X$ and the study of the non-inertness property discussed in the sequel.

For the asymptotic behavior of torsion in homotopy groups, Huang-Wu \cite{HW20} introduced the notion of local hyperbolicity as a local analogue of rational hyperbolicity in rational homotopy theory \cite{FHT09}. Given a prime $p$ and a positive integer $r$, a space $Y$ is said to be {\it $\mathbb{Z}/p^r$-hyperbolic} if the number of $\mathbb{Z}/p^r$-summands in $\pi_\ast(Y)$ grows exponentially. This notion has since been extensively studied by Huang-Wu \cite{HW20}, Zhu-Pan \cite{ZP21}, Boyde \cite{Boy22, Boy24}, Huang-Theriault \cite{HT24}, and Boyde-Huang \cite{BH24}. 
Here, Theorem \ref{Zdecom-thm} can be applied to prove the following result on local hyperbolicity. 

\begin{theorem}\label{exp-thm}
Let $X$ be the capped complex in Theorem \ref{Zdecom-thm}. The following hold: 
\begin{itemize}
\item if $k=0$, then $X$ is rationally hyperbolic, and is $\mathbb{Z}/p^r$-hyperbolic for all primes $p$ and all $r\geq 1$; 
\item if $k\neq 0$, then $X$ is $\mathbb{Z}/p^r$- and $\mathbb{Z}/p^{r+1}$-hyperbolic for all odd primes $p$ with $p^r~|~k$;
\item if $k\neq 0$ and $4~|~k$, then $X$ is $\mathbb{Z}/2^r$- and $\mathbb{Z}/2^{r+1}$-hyperbolic for all $2^r~|~k$;
\item if $k\neq 0$ and $2~|~k$ but $4~\nmid k$, then $X$ is $\mathbb{Z}/2$-, $\mathbb{Z}/4$- and $\mathbb{Z}/8$-hyperbolic.
\end{itemize}
\end{theorem}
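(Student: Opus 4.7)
The plan is to combine Theorem~\ref{Zdecom-thm} with known exponent theorems for Moore spaces and wedges of spheres. By Theorem~\ref{Zdecom-thm}, $\Omega X$ retracts off $\Omega\big((P^{n}(k)\vee C)\rtimes \Omega(S^{m}\times S^{n-m})\big)$, and projecting $P^{n}(k)\vee C$ onto $P^{n}(k)$ yields the further retract $\Omega\big(P^{n}(k)\rtimes \Omega(S^{m}\times S^{n-m})\big)$. I would then apply the suspension splitting
\[
\Sigma(A\rtimes B)\simeq \Sigma A\vee \Sigma(A\wedge B),
\]
together with James' decomposition of $\Sigma\Omega(S^{m}\times S^{n-m})$ into a wedge of spheres, to identify $\Sigma\big(P^{n}(k)\rtimes \Omega(S^{m}\times S^{n-m})\big)$ as a wedge of suspended Moore spaces $\Sigma P^{n+j}(k)$ indexed by an infinite family of dimensions $j\geq 0$.

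Next, the Hilton-Milnor theorem splits the loop space of such a wedge as a weak infinite product whose factors include $\Omega\Sigma(\bigwedge_{i} P^{n_{i}}(k))$ indexed by an exponentially growing family of basic Whitehead products. For an odd prime $p$ with $p^{s}\mid k$, each $p$-local factor retracts off loops on a Moore space of degree a power of $p$, and the exponent theorem of Cohen-Moore-Neisendorfer provides $\mathbb{Z}/p^{s}$- and $\mathbb{Z}/p^{s+1}$-summands in the homotopy of each such factor, hence $\mathbb{Z}/p^{r}$- and $\mathbb{Z}/p^{r+1}$-summands for every $r\leq s$. Summing over the exponentially many basic product factors gives the required exponential growth in $\pi_{*}(X)$, proving the odd-prime statement. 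The assertion for $4\mid k$ at $p=2$ is handled identically, using the $2$-primary exponent results for $P^{n+j}(2^{s})$ with $s\geq 2$ due to Cohen-Neisendorfer and Theriault.

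For $k=0$, we have $P^{n}(0)\simeq S^{n-1}\vee S^{n}$, so the half-smash suspends to a wedge of spheres, and Hilton-Milnor produces exponentially many loop-sphere factors of varying dimensions. Rational hyperbolicity is then immediate from the classical Friedlander-Halperin dichotomy, while $\mathbb{Z}/p^{r}$-hyperbolicity for every prime $p$ and every $r\geq 1$ follows from the wedge-of-spheres results of Huang-Wu~\cite{HW20}. The main obstacle is the residual case $2\mid k$ but $4\nmid k$: the mod-$2$ Moore space $P^{n+j}(2)$ has bounded homotopy exponent (at most $8$ in suitable ranges, cf.~Cohen-Neisendorfer), so only $\mathbb{Z}/2$-, $\mathbb{Z}/4$-, and $\mathbb{Z}/8$-summands are available, and one must verify that these low-order summands nonetheless persist in an exponentially growing family through the Hilton-Milnor decomposition; I would address this by identifying explicit $\mathbb{Z}/2$, $\mathbb{Z}/4$, and $\mathbb{Z}/8$ classes on each basic Whitehead product factor using the known torsion structure of $\pi_{*}(\Omega P^{n+j}(2))$.
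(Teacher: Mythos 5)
Your opening retraction steps match the paper: you use Theorem~\ref{Zdecom-thm} together with Lemma~\ref{semi-lemma} to peel $\Omega\big(P^{n}(k)\rtimes\Omega(S^m\times S^{n-m})\big)$ off $\Omega X$, and the James splitting to recognize the half-smash as a wedge of Moore spaces. From there, however, you diverge and attempt to \emph{re-derive} the local hyperbolicity of the Moore spaces $P^{\bullet}(p^r)$ from scratch via Hilton--Milnor and the Cohen--Moore--Neisendorfer exponent theorems, whereas the paper simply cites the already-established hyperbolicity results (Lemma~\ref{BHWlemma}, which packages Huang--Wu and Boyde). Since $P^n(p^r)$ is itself a wedge summand of $P^n(k)$, one loop of one Moore space retracting off $\Omega X$ already suffices, and there is no need to push through a Hilton--Milnor product decomposition.

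This re-derivation has a genuine gap. The Cohen--Moore--Neisendorfer exponent theorem is an \emph{upper} bound: it asserts $p^{r+1}\pi_*(P^n(p^r))=0$ for $p$ odd, and the analogous $2$-primary statements. It does not "provide $\mathbb{Z}/p^s$- and $\mathbb{Z}/p^{s+1}$-summands in each factor," which is what your argument needs. Producing lower bounds --- specifically, exhibiting exponentially many $\mathbb{Z}/p^s$-summands for each $s\le r+1$ --- is precisely the content of the hyperbolicity theorems of Huang--Wu and Boyde, and it required nontrivial work beyond CMN (careful analysis of which smash products of Moore spaces arise in the Hilton--Milnor factors, together with explicit constructions of classes of the right order). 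Your own sketch concedes this for the $2\mid k$, $4\nmid k$ case, where you write that you "would address this by identifying explicit $\mathbb{Z}/2$, $\mathbb{Z}/4$, and $\mathbb{Z}/8$ classes": that is exactly the hard part, and it is not done. For the $k=0$ case you also cite Huang--Wu for $\mathbb{Z}/p^r$-hyperbolicity of wedges of spheres for all $p$ and all $r$; the correct and strong enough reference is Boyde \cite{Boy22}, which the paper uses. The fix is simple: replace the re-derivation by a direct appeal to Lemma~\ref{BHWlemma}, applied to a single Moore space wedge summand of $P^n(k)$, and the statement follows.
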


In addition to the hyperbolicity result, stronger information about the homotopy groups of $X$ can be established. In Lemma \ref{Znoninert-lemma}, we show that, when $k\neq \pm 1$, for the homomorphism
$\pi_\ast(S^{m}\vee S^{n-m}\vee C)\larrow \pi_\ast(X)$
induced by the inclusion of the $(n-1)$-skeleton, the number of the torsion summands in the graded cokernel grows exponentially. In particular, the top cell attachment of $X$ produces infinitely many new torsion homotopy groups whose ranks grow exponentially, thereby leading to the subsequent discussion of the non-inertness property.

Theorem \ref{Zdecom-thm} can be applied to study the so-called inertness property. 
Let $A\stackrel{h}{\longrightarrow} X\stackrel{\varphi}{\longrightarrow} Y$ be a homotopy cofibration. 
The map $h$ is called {\it inert} if $\Omega \varphi$ has a right homotopy inverse. Rational inertness was introduced by F\'{e}lix-Halperin-Thomas \cite{FHT82} and was widely investigated in the context of rational homotopy theory. For instance, a remarkable result of Halperin and Lemaire \cite{HL87} shows that the attaching map for the top cell of any Poincar\'{e} duality complex is rationally inert unless its rational cohomology algebra is generated by a single element. 
In contrast, the local and integral inertness were studied much later in the context of unstable homotopy theory \cite{BT22, Hua24, The24a, The24b}.

However, comparatively little work has been devoted to non-inertness and to the gaps between rational inertness and local or integral inertness. Our next theorems provide new insight in these directions.

 \begin{theorem}\label{Zinert-thm}
 Let $X$ be the capped complex in Theorem \ref{Zdecom-thm}. Then the attaching map for the top cell of $X$ is inert if and only if $k=\pm 1$. 
 \end{theorem}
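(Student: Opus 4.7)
The necessity follows directly from Lemma~\ref{Znoninert-lemma}. If the attaching map is inert then $\Omega\varphi$ admits a right homotopy inverse, so $\pi_\ast(\overline{X})\to\pi_\ast(X)$ is split surjective in positive degrees. However, Lemma~\ref{Znoninert-lemma} (proved below) asserts that for $k\neq\pm 1$ the graded cokernel of this homomorphism contains infinitely many nonzero torsion summands with exponentially growing ranks. This contradicts surjectivity, forcing $k=\pm 1$.

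For the sufficiency, assume $k=\pm 1$. Then $P^n(\pm 1)\simeq\ast$, and Theorem~\ref{Zdecom-thm} yields a homotopy fibration $E\to X\xrightarrow{f}S^m\times S^{n-m}$ with $E=C\rtimes\Omega(S^m\times S^{n-m})$, together with a splitting $\Omega X\simeq \Omega(S^m\times S^{n-m})\times \Omega E$. The plan is to build a parallel splitting of $\Omega\overline{X}$ and compare them through $\varphi$. Since $\overline{X}=(S^m\vee S^{n-m})\vee C$, the wedge pinch $p:\overline{X}\to S^m\vee S^{n-m}$ is a retraction whose section is the natural wedge inclusion; a direct computation with the contractible path space of $S^m\vee S^{n-m}$ identifies the homotopy fibre of $p$ with $F:=C\rtimes\Omega(S^m\vee S^{n-m})$, producing a splitting $\Omega\overline{X}\simeq\Omega(S^m\vee S^{n-m})\times\Omega F$. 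Writing $j:S^m\vee S^{n-m}\hookrightarrow S^m\times S^{n-m}$ for the natural inclusion, the identity $f\circ\varphi=j\circ p$ holds because $k[i_1,i_2]$ vanishes in $\pi_{n-1}(S^m\times S^{n-m})$, so $j\circ p$ extends over the top cell of $X$. This yields a map of homotopy fibrations whose induced map on fibres is $1\rtimes\Omega j:F\to E$. The classical inertness of the Whitehead product $[i_1,i_2]$ furnishes a pointed right homotopy inverse $r:\Omega(S^m\times S^{n-m})\to\Omega(S^m\vee S^{n-m})$ to $\Omega j$, and then $1\rtimes r:E\to F$ satisfies $(1\rtimes\Omega j)\circ(1\rtimes r)\simeq 1\rtimes\mathrm{id}=\mathrm{id}$, providing a right homotopy inverse to $1\rtimes\Omega j$. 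Hence $r\times\Omega(1\rtimes r)$ assembles into a right homotopy inverse of $\Omega\varphi$ under the two product splittings.

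The main obstacle is confirming that the loop space splitting of $\Omega X$ provided by Theorem~\ref{Zdecom-thm} is genuinely compatible, through $\Omega\varphi$, with the section-induced splitting of $\Omega\overline{X}$, so that the factor-wise right inverses above combine into one for $\Omega\varphi$. This compatibility is exactly the naturality asserted in Remark~\ref{natremark}, applied to the map of homotopy fibrations just constructed.
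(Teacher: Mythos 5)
Your proof of the "only if" direction matches the paper: you invoke Lemma~\ref{Znoninert-lemma}, exactly as the paper does. The "if" direction, however, takes a genuinely different route from the paper's Lemma~\ref{Zinert-lemma} (which applies the inertness composition theorem \cite[Theorem 5.4]{Hua24} to Diagram~\eqref{CXQdiag} and never needs an explicit product splitting of $\Omega X$). Unfortunately, your route as written has a gap.

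The gap is in the final step, where you assert that under the two splittings $\Omega\overline{X}\simeq\Omega(S^m\vee S^{n-m})\times\Omega F$ and $\Omega X\simeq\Omega(S^m\times S^{n-m})\times\Omega E$ the map $\Omega\varphi$ becomes the product $\Omega j\times\Omega(1\rtimes\Omega j)$, so that $r\times\Omega(1\rtimes r)$ is a right inverse. For the naturality of Theorem~\ref{BT2} to deliver that product decomposition you need the two sections of the looped projections to be compatible: you need $\Omega\varphi\circ\sigma_1\simeq\sigma_2\circ\Omega j$, where $\sigma_1$ is the looped wedge inclusion $\Omega(S^m\vee S^{n-m})\to\Omega\overline{X}$ (your section of $\Omega p$) and $\sigma_2=\Omega s\circ\phi$ is the section of $\Omega f$ from Lemma~\ref{rlemma2}. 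But $\Omega\varphi\circ\sigma_1=\Omega s$, whereas $\sigma_2\circ\Omega j=\Omega s\circ\phi\circ\Omega j$; these are not homotopic, since $\phi\circ\Omega j$ is not homotopic to the identity on $\Omega(S^m\vee S^{n-m})$ (in Pontryagin homology it is the composite $T(u,v)\to\mathbb{Z}[u,v]\to T(u,v)$, which is not the identity, and $\Omega s$ does not kill the discrepancy). Moreover, Remark~\ref{natremark} as stated is a naturality result for two capped complexes over the \emph{same} base $S^m\times S^{n-m}$; your map of fibrations has bases $S^m\vee S^{n-m}$ and $S^m\times S^{n-m}$, so the remark does not apply in the form you cite. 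Without compatible sections, $\Omega\varphi$ need not respect the product structure, and your factor-wise right inverses do not assemble.

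The argument can be repaired by choosing a different cofibration for $\overline{X}$ so that both rows sit over the same base: take
\[
S^{n-1}\vee C\larrow \overline{X}\stackrel{j\circ p}{\larrow} S^m\times S^{n-m},
\]
which is a homotopy cofibration because the cofibre of $[i_1,i_2]\vee 1_C$ is $S^m\times S^{n-m}$. Now the base map to the second row (the cofibration for $X$ from Lemma~\ref{cofib-lemma}) is the identity, the same section of $\Omega(S^m\times S^{n-m})$ works on both sides, and Remark~\ref{natremark} (or equivalently the cited naturality of Theorem~\ref{BT2}) genuinely applies. The induced map on fibres is $(\jmath\vee 1_C)\rtimes 1$ with $\jmath\colon S^{n-1}\to P^{n}(\pm 1)\simeq\ast$, so the fibre map is the half-smash of the pinch $S^{n-1}\vee C\to C$, which has an obvious right homotopy inverse; combined with the identity on the base this yields a right homotopy inverse of $\Omega\varphi$. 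This is essentially the same computation carried out in the paper's proof of Lemma~\ref{Znoninert-lemma}, specialised to $k=\pm 1$. The paper's Lemma~\ref{Zinert-lemma} sidesteps the whole issue by citing the general inertness composition theorem, which is cleaner if that black box is available; your corrected approach is more explicit but requires the extra care with the base and section that was missing.
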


The inertness part of Theorem \ref{Zinert-thm} can be applied to calculate the loop space algebra of $X$. 
Let $R$ be a commutative ring with unit. Denote by $T(- ; R)$ the free tensor algebra functor over $R$. For a graded $R$-module $V$, denote by $\Sigma^{-1}V$ the graded $R$-module obtained by shifting every element of $V$ down by one degree.

\begin{theorem}\label{loopH-thm}
Let $X$ be the capped complex in Theorem \ref{Zdecom-thm} with $k=\pm 1$ and $C$ being a co-$H$-space. 
Let $S^{n-2}\stackrel{\widetilde{\mathfrak{g}}}{\larrow} \Omega(S^m\vee S^{n-m}\vee C)$ be the adjoint of $\mathfrak{g}$. Then for any commutative ring $R$ with unit such that $H_\ast(C; R)$ is a free $R$-module, there is an algebra isomorphism
\[
H_\ast(\Omega X; R)\cong T(\Sigma^{-1} \widetilde{H}_\ast(S^m\vee S^{n-m}\vee C); R)/ (\text{Im}~(\widetilde{\mathfrak{g}}_\ast)),
\]
where $(\text{Im}~(\widetilde{\mathfrak{g}}_\ast))$ is the two sided ideal generated by $\text{Im}~(\widetilde{\mathfrak{g}}_\ast)$. Moreover, if $C$ is the suspension of a co-$H$-space, then this is an isomorphism of Hopf algebras. 
\end{theorem}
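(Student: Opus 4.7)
My plan is to combine the inertness statement of Theorem~\ref{Zinert-thm} with the Bott--Samelson theorem and the explicit decomposition of Theorem~\ref{Zdecom-thm}. First I would observe that, since $S^m$, $S^{n-m}$ and $C$ are co-$H$-spaces, so is the wedge $\overline{X}=S^m\vee S^{n-m}\vee C$; being simply connected, it is homotopy equivalent to a suspension $\Sigma Z$ by a classical theorem. As $H_\ast(C;R)$ is a free $R$-module, so is $H_\ast(Z;R)$, and the Bott--Samelson theorem yields a natural algebra isomorphism
\[
H_\ast(\Omega \overline{X};R)\cong T\bigl(\Sigma^{-1}\widetilde{H}_\ast(\overline{X};R);R\bigr).
\]
Because $k=\pm 1$, Theorem~\ref{Zinert-thm} ensures the attaching map $k\mathfrak{g}$ is inert, so the inclusion $j\colon \overline{X}\hookrightarrow X$ induces a loop map $\Omega j\colon \Omega \overline{X}\to \Omega X$ admitting a right homotopy inverse; in particular $H_\ast(\Omega j;R)$ is surjective. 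The composition $S^{n-2}\xrightarrow{\widetilde{\mathfrak{g}}}\Omega \overline{X}\xrightarrow{\Omega j}\Omega X$ is null-homotopic as it is adjoint to $j\circ k\mathfrak{g}\simeq \ast$, so $\mathrm{Im}(\widetilde{\mathfrak{g}}_\ast)$ lies in the kernel of $H_\ast(\Omega j;R)$. Since that kernel is a two-sided ideal in the Pontryagin algebra, one obtains an induced surjective algebra homomorphism
\[
\Phi\colon T\bigl(\Sigma^{-1}\widetilde{H}_\ast(\overline{X};R);R\bigr)\bigl/\bigl(\mathrm{Im}(\widetilde{\mathfrak{g}}_\ast)\bigr)\twoheadrightarrow H_\ast(\Omega X;R).
\]

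To prove $\Phi$ is injective I would match Poincar\'e series using Theorem~\ref{Zdecom-thm}: since $P^n(\pm 1)\simeq \ast$, that theorem specializes to $\Omega X\simeq \Omega(S^m\times S^{n-m})\times \Omega\bigl(C\rtimes \Omega(S^m\times S^{n-m})\bigr)$, and the co-$H$-structure on $C$ splits the half-smash as $C\rtimes \Omega(S^m\times S^{n-m})\simeq C\vee \bigl(C\wedge \Omega(S^m\times S^{n-m})\bigr)$, after which K\"unneth and Bott--Samelson produce an explicit Poincar\'e series for $H_\ast(\Omega X;R)$. On the algebraic side, since the $S^m\vee S^{n-m}$-component of $\mathfrak{g}$ is the Whitehead product $[i_1,i_2]$, the class $\widetilde{\mathfrak{g}}_\ast(\iota)$ takes the form $[x,y]+\omega$, where $x,y$ are the generators of $\Sigma^{-1}\widetilde{H}_\ast(S^m\vee S^{n-m};R)$ and $\omega$ lies in the ideal generated by $\Sigma^{-1}\widetilde{H}_\ast(C;R)$. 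An Anick-style dimension count for the quotient of $T(x,y,\Sigma^{-1}\widetilde{H}_\ast(C;R))$ by this single relation then matches the Poincar\'e series above, forcing $\Phi$ to be an isomorphism.

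Finally, when $C=\Sigma C'$ the space $\overline{X}$ is itself a suspension, so the Bott--Samelson isomorphism is already one of Hopf algebras, and $\widetilde{\mathfrak{g}}_\ast(\iota)$ is primitive because $S^{n-2}$ is a suspension. As the two-sided ideal generated by a primitive in a primitively generated tensor Hopf algebra is automatically a Hopf ideal, $\Phi$ upgrades to an isomorphism of Hopf algebras. The main obstacle I anticipate is the Poincar\'e series comparison at the injectivity step: one must verify that the single relation $\widetilde{\mathfrak{g}}_\ast(\iota)=[x,y]+\omega$ truncates the free tensor algebra down to precisely the size dictated by the homotopy decomposition, which requires careful bookkeeping of how the Samelson bracket $[x,y]$ interacts with the $C$-contribution $\omega$.
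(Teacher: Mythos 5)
Your proposal takes a genuinely different route from the paper. The paper's proof is a one-paragraph reduction: it invokes Lemma~\ref{Zinert-lemma} for inertness, notes that $S^m\vee S^{n-m}\vee C$ inherits the co-$H$ (resp.\ suspension-of-co-$H$) structure and free $R$-homology from $C$, and then applies Proposition~\ref{ST-prop} verbatim. You, by contrast, are attempting to reprove the content of Proposition~\ref{ST-prop} from scratch via Bott--Samelson, the surjectivity of $\Omega j$, and a Poincar\'e series comparison; that is a legitimately distinct (and more laborious) strategy, and it is worth understanding where it is sound and where it breaks.

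There are two concrete gaps. First, your opening assertion that a simply connected co-$H$-space ``is homotopy equivalent to a suspension $\Sigma Z$ by a classical theorem'' is false: Ganea and Berstein--Hilton constructed simply connected finite co-$H$-spaces that are not suspensions. So you cannot invoke the classical Bott--Samelson theorem in the form you state. The algebra isomorphism $H_\ast(\Omega Y;R)\cong T\bigl(\Sigma^{-1}\widetilde H_\ast(Y;R)\bigr)$ for a co-$H$-space $Y$ with free $R$-homology is true, but proving it requires more than ``$Y$ is a suspension'' --- one uses the retraction $Y\to\Sigma\Omega Y\to Y$ coming from the co-$H$-structure and the James splitting, or an Adams--Hilton model argument. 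This is precisely part of what Proposition~\ref{ST-prop} packages up, and it is also the reason that Proposition~\ref{ST-prop} distinguishes the algebra statement (valid for co-$H$ $Y$) from the Hopf algebra statement (requiring $Y$ to be the suspension of a co-$H$-space, since otherwise the tensor algebra need not carry the primitively generated Hopf structure). Second, the injectivity step via Poincar\'e series is left as a sketch, and it is not a routine verification: to control the Hilbert series of $T(V)/(\widetilde{\mathfrak g}_\ast(\iota))$ you must show that the single relation $[x,y]+\omega$ is a \emph{strongly free} (algebraically inert) element of the tensor algebra, which is the Anick/Halperin--Lemaire machinery you allude to but do not carry out. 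Showing that strong freeness is preserved when one adds the $C$-contribution $\omega$ to the already-inert $[x,y]$, and then matching the resulting series against the one coming from Theorem~\ref{Zdecom-thm} (which itself needs the co-$H$ version of Bott--Samelson for $C\rtimes\Omega(S^m\times S^{n-m})$), is real work, and is exactly what the cited Proposition~\ref{ST-prop} is designed to avoid. In short: the plan has the right shape, but as written it relies on one false classical fact and leaves the decisive injectivity computation open.
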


The non-inertness part of Theorem \ref{Zinert-thm} can be applied to construct explicit examples that illustrate the gaps between rational inertness and local or integral inertness.  

\begin{theorem}\label{QZinert-thm}
In every dimension greater than three, there exist infinitely many finite capped complexes, pairwise non-homotopy-equivalent, whose top-cell attaching maps are rationally inert but not integrally inert.

Moreover, in every dimension greater than three, and for each prime $p$, there exist infinitely many finite capped complexes, pairwise non-homotopy-equivalent, whose top-cell attaching maps are rationally inert but not locally inert at $p$.
\end{theorem}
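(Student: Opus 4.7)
The plan is to take the simplest case of Theorem \ref{Zdecom-thm}, with $C=\ast$ and $\mathfrak{g}=[i_1,i_2]$, and to vary the integer $k$. Fix any $n\ge 4$ and any $2\le m\le n-2$; for each integer $k$, let $X_k$ denote the $n$-dimensional capped complex determined by the cofibration
\[
S^{n-1}\xrightarrow{\ k[i_1,i_2]\ } S^m\vee S^{n-m}\longrightarrow X_k,
\]
which satisfies the hypotheses of Theorem \ref{Zdecom-thm}. For $k\neq 0$, the cup product $H^m(X_k;\mathbb{Z})\otimes H^{n-m}(X_k;\mathbb{Z})\to H^n(X_k;\mathbb{Z})$ is multiplication by $k$ on canonical generators, and a ring automorphism can change these generators only by signs; hence $|k|$ is a homotopy invariant of $X_k$, so any infinite subfamily of $\{X_k\}_{k\ge 2}$ is pairwise non-homotopy-equivalent. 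Rational inertness of the top-cell attaching map holds for every $k\neq 0$: after rationalization, $k$ becomes a unit and the attaching map is equivalent to $[i_1,i_2]$, which is classically inert because the Hilton--Milnor splitting realizes $\Omega(S^m\times S^{n-m})\simeq \Omega S^m\times\Omega S^{n-m}$ as a retract of $\Omega(S^m\vee S^{n-m})$.

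For the first assertion, Theorem \ref{Zinert-thm} says that the top-cell attaching map of $X_k$ fails to be integrally inert whenever $k\neq\pm 1$, so the family $\{X_k\}_{k\ge 2}$ already suffices. For the second assertion, fix a prime $p$ and restrict to the subfamily $\{X_{jp}\}_{j\ge 1}$, which is still pairwise non-equivalent and rationally inert by the paragraph above. The remaining task is to establish non-inertness locally at $p$, which I would approach by invoking the mechanism behind Theorem \ref{Zinert-thm}: Lemma \ref{Znoninert-lemma} exhibits exponentially many $\mathbb{Z}/(jp)$-summands in the cokernel of $\pi_\ast(S^m\vee S^{n-m})\to\pi_\ast(X_{jp})$, coming from the Moore space factor $P^n(jp)$ in the loop decomposition of Theorem \ref{Zdecom-thm}. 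Since $p\mid jp$, each such summand survives $p$-localization, whereas $p$-local inertness would force the $p$-local cokernel of this map to vanish, giving a contradiction.

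The main obstacle is to confirm that the torsion produced by Lemma \ref{Znoninert-lemma} indeed persists after $p$-localization when $p\mid k$. This reduces to the observation that $P^n(k)_{(p)}$ is non-trivial whenever $p\mid k$, so its contribution to $\pi_\ast(X_k)_{(p)}$ through the loop decomposition cannot be detected on $\pi_\ast(\overline{X}_k)_{(p)}$. Once this point is verified, non-local-inertness at $p$ follows for every $X_{jp}$, completing the proof of both assertions.
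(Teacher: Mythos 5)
Your proof is correct, and it runs along the same lines as the paper's, specialized to the simplest possible family. The paper uses the family $X_{M(n,m,t),k}$ with $M(n,m,t)=\sharp_t(S^m\times S^{n-m})$ and varies both $t$ and $k$; you fix $t=1$, so your $X_k$ is exactly $X_{M(n,m,1),k}$, and vary $k$ alone. Both constructions rely on Theorem~\ref{Zinert-thm} and Lemma~\ref{Znoninert-lemma} to rule out integral and local inertness. Two points of comparison are worth noting. First, the paper asserts without argument that the $X_{M(n,m,t),k}$ are pairwise non-homotopy-equivalent; your cup-product argument (that $a^2=b^2=0$ and $ab=ku$ forces any graded ring isomorphism to preserve $|k|$) supplies a clean justification, and it is worth checking that it still goes through when $m=n-m$, where $H^m$ has rank two — it does, since $a^2=b^2=0$ forces the change-of-basis matrix to be monomial. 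Second, for rational inertness the paper invokes Halperin–Lemaire, which is needed for the more general $t\geq 2$ family since $\overline{M(n,m,t)}$ is then a bigger wedge; for your $t=1$ family the Hilton–Milnor splitting suffices, because rationally the degree-$k$ self-map of $S^{n-1}$ is an equivalence and the cofibration collapses to the standard one for $S^m\times S^{n-m}$, which is the more elementary route you take.

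One small inefficiency: your final paragraph treats the persistence of torsion after $p$-localization as an open verification, but Lemma~\ref{Znoninert-lemma} already states directly that the top-cell attaching map is not locally inert after localization at any prime $p$ dividing $k$, so there is nothing left to check for that step. Also, the torsion produced there is in $\mathbb{Z}/p^r$ and $\mathbb{Z}/p^{r+1}$ for $p^r\mid k$, not $\mathbb{Z}/(jp)$ as written, though this does not affect the conclusion.
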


Additionally, by Remark \ref{ex-remark}, the examples constructed for Theorem \ref{QZinert-thm} can be chosen to be both rationally and locally hyperbolic, with their top cell attachments producing infinitely many new torsion homotopy groups whose ranks grow exponentially.

We remark that all the examples constructed for Theorem \ref{QZinert-thm} are not Poincar\'{e} Duality complexes. It remains an interesting open question whether the top-cell attachments of all Poincar\'{e} Duality complexes are integrally inert, except in the case where their rational cohomology algebras are generated by a single element.

We also establish a local analogue of Theorem \ref{Zdecom-thm}. Motivated both by the hypotheses in that theorem and by the context of intersection theory, we introduce the notion of a spherical pair. 
For a space $X$, a pair of cohomology classes $(a, b)\in H^{m}(X;\mathbb{Z})\times  H^{n-m}(X;\mathbb{Z})$ is called a {\it spherical pair} if $a^2=0$ and $b^2=0\in H^\ast(X;\mathbb{Z})$, and there exist maps
\[
s_1: S^m\larrow X \ \ \ {\rm and} \ \ \  s_2: S^{n-m}\larrow X
\]
such that 
$s_1^\ast(a)$ and $s_2^\ast(b)$ are generators of $H^{m}(S^m;\mathbb{Z})$ and $H^{n-m}(S^{n-m};\mathbb{Z})$, respectively. 

\begin{theorem}\label{pdecom-thm}
Let $X$ be an $n$-dimensional simply connected capped complex such that its $(n-1)$-skeleton is a co-$H$-space. 
Suppose that $(a, b)\in H^{m}(X;\mathbb{Z})\times  H^{n-m}(X;\mathbb{Z})$ with $2\leq m\leq n-2$ is a spherical pair of $X$ such that $\langle a\cup b, [X]\rangle\neq 0$, and the attaching map for the top cell of $X$ is 
divisible by $k\in \mathbb{Z}$. Then $k~|~\langle a\cup b, [X]\rangle$, and 
after localization away from any prime $p$ satisfying $2p<{\rm max}(m, n-m)+4$ or $p~|~\frac{\langle a\cup b, [X]\rangle}{k}$, the following hold:
\begin{itemize}
\item there is a homotopy cofibration
\[
S^{n-1} \larrow S^m\vee S^{n-m}\vee C\stackrel{}{\larrow} X
\]
for a simply connected space $C$ of dimension less than $n$;
\item
there is a homotopy fibration
\[
(P^{n}(k)\vee C)\rtimes \Omega (S^m\times S^{n-m})\stackrel{}{\larrow} X\stackrel{}{\larrow} S^m\times S^{n-m};
\]
\item the homotopy fibration splits after looping to give a homotopy equivalence 
   \[
   \Omega X\simeq\Omega (S^m\times S^{n-m})\times\Omega \big((P^{n}(k)\vee C)\rtimes \Omega (S^m\times S^{n-m})\big). 
 \] 
 \end{itemize}
\end{theorem}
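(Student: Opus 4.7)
The strategy is to reduce the statement to the integral Theorem~\ref{Zdecom-thm} after suitable localization. The reduction proceeds in two halves: producing the wedge decomposition of the $(n-1)$-skeleton, and then identifying the top-cell attaching map up to the needed normalization.

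For the skeletal splitting, by cellular approximation the spherical maps $s_{1}\colon S^{m}\to X$ and $s_{2}\colon S^{n-m}\to X$ factor through $\overline{X}$ (since $m,n-m\leq n-2$), producing a map $s\colon S^{m}\vee S^{n-m}\to\overline{X}$. I would exploit the co-$H$-structure on $\overline{X}$ to construct a left homotopy inverse $r\colon\overline{X}\to S^{m}\vee S^{n-m}$ of $s$ after localization away from the primes $p$ with $2p<\max(m,n-m)+4$. The hypothesis on $p$ is used to annihilate the unstable James--Hopf-type obstructions in the relevant metastable range. Combined with the co-$H$-structure of $\overline{X}$, such a retraction converts into a wedge splitting $\overline{X}\simeq S^{m}\vee S^{n-m}\vee C$, for a simply connected $C$ of dimension less than $n$, via the standard splitting principle for co-$H$-retracts. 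This establishes the first bullet point and presents the top-cell attachment as a map $g\colon S^{n-1}\to S^{m}\vee S^{n-m}\vee C$.

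Next, decompose the $S^{m}\vee S^{n-m}$-component of $g$ as $\alpha_{1}+\alpha_{2}+q\,[i_{1},i_{2}]$, with $\alpha_{1}\in\pi_{n-1}(S^{m})$, $\alpha_{2}\in\pi_{n-1}(S^{n-m})$, and $q\in\mathbb{Z}$. Since $a^{2}=b^{2}=0$ and the spherical maps detect $a$ and $b$, the only contribution to the cup product $\langle a\cup b,[X]\rangle$ in the cofibre $X=\overline{X}\cup_{g}e^{n}$ comes from the Whitehead-bracket term, yielding $\langle a\cup b,[X]\rangle=q$. The divisibility hypothesis $g=k\cdot h$ then forces $k\mid q$, establishing $k\mid\langle a\cup b,[X]\rangle$. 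Writing $g=k\cdot\mathfrak{g}$, the $[i_{1},i_{2}]$-coefficient of $\mathfrak{g}$ is $q/k$. After localizing further away from the primes dividing $q/k$, this coefficient becomes a unit; rescaling by that unit puts the attaching map in the precise form required by Theorem~\ref{Zdecom-thm}, namely with $S^{m}\vee S^{n-m}$-component exactly $[i_{1},i_{2}]$. An application of Theorem~\ref{Zdecom-thm} then supplies the remaining two bullet points.

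The principal obstacle is the skeletal splitting in the first step: the spherical-pair hypothesis provides only cohomological data, so translating it into a genuine geometric retraction requires a delicate use of the co-$H$-structure on $\overline{X}$ together with control of the metastable homotopy groups of spheres. The numerical condition $2p<\max(m,n-m)+4$ is engineered precisely to guarantee that the relevant obstructions vanish after localization away from $p$. Once the splitting is in hand, the remaining steps are formal manipulations of Whitehead products and cup products followed by an invocation of Theorem~\ref{Zdecom-thm}.
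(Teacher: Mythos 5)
Your overall strategy — splitting the $(n-1)$-skeleton using the spherical pair and the co-$H$-structure, reading $\langle a\cup b,[X]\rangle$ off the $[i_1,i_2]$-coefficient of the attaching map, then normalizing and invoking Theorem~\ref{Zdecom-thm} — is the same as the paper's. The skeletal-splitting step, while phrased in terms of ``James--Hopf-type obstructions'' rather than the explicit cohomology-lifting Lemmas~\ref{oddlift-lemma} and~\ref{evenlift-lemma}, is essentially the argument of Lemma~\ref{plemma}. However, the normalization step as you describe it contains a genuine gap.

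You write the $S^m\vee S^{n-m}$-component of $g$ as $\alpha_1+\alpha_2+q[i_1,i_2]$ with $\alpha_1\in\pi_{n-1}(S^m)$, $\alpha_2\in\pi_{n-1}(S^{n-m})$, and never dispose of $\alpha_1,\alpha_2$. After inverting $q/k$, precomposing with a degree-$(k/q)$ self-equivalence of $S^{n-1}$ multiplies \emph{every} summand, giving $(k/q)\alpha_1+(k/q)\alpha_2+k[i_1,i_2]$: the $\alpha$-terms persist. But Theorem~\ref{Zdecom-thm} requires the $S^m\vee S^{n-m}$-component of $\mathfrak{g}$ to be \emph{exactly} $[i_1,i_2]$; the very first step of its proof — the top-right square of Diagram~\eqref{CXQdiag} defining $Q$ — uses $q_{12}\circ g\simeq k[i_1,i_2]$ and breaks if the extra $\alpha_1,\alpha_2$ survive. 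So your reduction to Theorem~\ref{Zdecom-thm} is incomplete unless you first prove $\alpha_1=\alpha_2=0$. The paper does this by a second, independent application of the localization bound: $\mathfrak{g}_1=\alpha_1/k$ and $\mathfrak{g}_2=\alpha_2/k$ induce trivial maps in cohomology (for dimension reasons), so the uniqueness clause of Lemmas~\ref{oddlift-lemma} and~\ref{evenlift-lemma}, now applied to the $(n-1)$-dimensional complex $S^{n-1}$, forces them to be null homotopic after the stated localization. Your last paragraph attributes the role of the numerical hypothesis entirely to the skeletal splitting and calls the rest ``formal manipulations''; this is precisely the point the argument misses, since killing $\alpha_1,\alpha_2$ is not formal and consumes the same localization hypothesis a second time.
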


In the context of intersection theory, a geometric analogue of Theorem \ref{pdecom-thm} can be formulated as follows. For any homology class $x\in H_\ast(Y)$, denote by $x^\ast\in H^\ast(Y)$ its Kronecker dual. 

\begin{theorem}\label{int-thm}
Let $M$ be an $n$-dimensional simply connected closed smooth manifold such that its $(n-1)$-skeleton is a co-$H$-space. Suppose that there exist two transversally embedded spheres
\[
s_1: S^m\larrow M \ \ \ {\rm and} \ \ \  s_2: S^{n-m}\larrow M
\]
with $2\leq m\leq n-2$. Denote by $a=(s_{1\ast}([S^m]))^\ast$ and $b=(s_{2\ast}([S^{n-m}]))^\ast$. Suppose that $a^2=0$, $b^2=0$, and $\langle a\cup b, [M]\rangle\neq 0$. 
Then after localization away from any prime $p$ satisfying $2p<{\rm max}(m, n-m)+4$ or $p~|~\langle a\cup b, [M]\rangle$, the following hold:
\begin{itemize}
\item there is a homotopy cofibration
\[
S^{n-1} \larrow S^m\vee S^{n-m}\vee C\stackrel{}{\larrow} M
\]
for a simply connected space $C$ of dimension less than $n-1$;
\item
there is a homotopy fibration
\[
C\rtimes \Omega (S^m\times S^{n-m})\stackrel{}{\larrow} M\stackrel{}{\larrow} S^m\times S^{n-m};
\]
\item the homotopy fibration splits after looping to give a homotopy equivalence 
   \[
   \Omega M\simeq\Omega (S^m\times S^{n-m})\times\Omega \big(C\rtimes \Omega (S^m\times S^{n-m})\big). 
 \] 
 \end{itemize}
\end{theorem}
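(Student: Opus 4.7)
The plan is to deduce Theorem \ref{int-thm} from Theorem \ref{pdecom-thm} by specializing the divisibility parameter to $k=1$. The work divides into three tasks: translating the geometric hypothesis into the algebraic spherical pair condition, invoking Theorem \ref{pdecom-thm}, and refining the dimensional bound on $C$.

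First, I would verify that $(a, b)$ is a spherical pair in the sense required. The transversally embedded spheres $s_1$ and $s_2$ supply the required maps. By Kronecker duality,
\[
\langle s_1^\ast(a), [S^m] \rangle = \langle a, s_{1\ast}([S^m]) \rangle = 1,
\]
since $a$ is by definition Kronecker dual to $s_{1\ast}([S^m])$; the same computation works for $s_2^\ast(b)$. Combined with the hypotheses $a^2 = 0$ and $b^2 = 0$, this establishes the spherical pair condition.

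Next, since $M$ is a simply connected closed smooth $n$-manifold, it is orientable and a handle decomposition with a single top handle produces a CW structure with a single $n$-cell, so $M$ is a capped complex. The hypothesis that $\overline{M}$ is a co-$H$-space carries over directly. I would then apply Theorem \ref{pdecom-thm} with $k = 1$, which trivially divides any map. Because $P^n(1)$ is contractible, the Moore space term vanishes from the fibration and the loop space decomposition, and the localization condition ``$2p < \max(m, n-m) + 4$ or $p~|~\langle a \cup b, [M]\rangle/1$'' matches exactly the one in Theorem \ref{int-thm}. Three of the four conclusions then follow directly.

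The main obstacle is the sharper dimensional bound $\dim C < n-1$ rather than the $\dim C < n$ provided by Theorem \ref{pdecom-thm}. I would argue this via Poincar\'{e} duality: since $M$ is simply connected, $H^{n-1}(M;\mathbb{Z}) \cong H_1(M;\mathbb{Z}) = 0$, so after localization away from the specified primes there is a minimal CW model with no $(n-1)$-cells in $\overline{M}$. Consequently $\dim \overline{M} \leq n-2$ in the localized setting, and since $C$ arises as a wedge summand of $\overline{M}$ in the co-$H$-space splitting $\overline{M} \simeq S^m \vee S^{n-m} \vee C$ furnished by the construction in the proof of Theorem \ref{pdecom-thm}, one obtains $\dim C \leq n-2 < n-1$ as required. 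This refinement relies on standard minimal CW techniques for simply connected spaces in the localized category, together with the observation that the construction in the proof of Theorem \ref{pdecom-thm} preserves the cellular dimension of the summand $C$.
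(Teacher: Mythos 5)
Your proposal follows essentially the same route as the paper: verify that the geometric hypotheses give a spherical pair via Kronecker duality, then invoke Theorem~\ref{pdecom-thm} with $k=1$ so that $P^n(1)\simeq \ast$ drops out. The paper's proof is terser (it does not comment on the sharper dimension bound at all), so the one place where you add content is in justifying $\dim C < n-1$ rather than the $\dim C < n$ that Theorem~\ref{pdecom-thm} delivers. Your argument there is correct in spirit but slightly under-justified: knowing $H^{n-1}(M;\mathbb{Z})\cong H_1(M;\mathbb{Z})=0$ yields, via universal coefficients, that $H_{n-1}(M;\mathbb{Z})$ is torsion and $H_{n-2}(M;\mathbb{Z})$ is torsion-free, but you also need $H_{n-1}(M;\mathbb{Z})=0$ (not merely torsion) to kill the top degree of $\overline{M}$; this follows from Poincar\'e duality $H_{n-1}(M;\mathbb{Z})\cong H^1(M;\mathbb{Z})=0$, or equivalently from $H^n(M;\mathbb{Z})\cong\mathbb{Z}$ being free. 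With both $H_{n-1}(\overline{M};\mathbb{Z})=0$ and $H_{n-2}(\overline{M};\mathbb{Z})$ free, the standard minimal CW model replaces $\overline{M}$ by a complex of dimension at most $n-2$, whence $\dim C\leq n-2$. The paper presumably has in mind the handle-theoretic fact that a simply connected closed smooth $n$-manifold admits a handle decomposition without $(n-1)$-handles, which gives the same conclusion more directly; your homological route is an acceptable substitute but should spell out the $H_{n-2}$ torsion-free step.
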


More generally, let $M$ be an $n$-dimensional simply connected Poincar\'{e} Duality complex with a homotopy cofibration
\[
S^{n-1}\stackrel{\mathfrak{g}}{\larrow} \overline{M}\stackrel{}{\larrow} M.
\]
For any $k\in \mathbb{Z}$, we can define a space $X_{M,k}$ by the homotopy cofibration
\[
S^{n-1}\stackrel{k\mathfrak{g}}{\larrow} \overline{M}\stackrel{}{\larrow} X_{M,k}.
\]
Theorem \ref{pdecom-thm} can be applied to prove the following loop space decomposition theorem for highly connected $X_{M,k}$.   
 \begin{theorem}\label{Mp-thm}
 Let $M$ be an $(l-1)$-connected Poincar\'{e} Duality complex of dimension $n\leq 3l-2$ with $l\geq 3$. Write 
\[
H_i(M;\mathbb{Z})\cong \mathbb{Z}^{\oplus d_i}\oplus T_i,
\]
where $d_i\geq 0$ and $T_i$ is a finite abelian group. 
 Suppose that there exists $(a, b)\in H^{m}(M;\mathbb{Z})\times  H^{n-m}(M;\mathbb{Z})$ such that $l\leq m\leq n-l$, $a^2=0$, $b^2=0$ and $\langle a\cup b, [M]\rangle=\pm 1$. Then after localization away from any prime $p$ satisfying $2p<{\rm max}(m, n-m)+4$, for any $k\in \mathbb{Z}\backslash \{0\}$ the following hold:
\begin{itemize}
\item there is a homotopy cofibration
\[
S^{n-1} \larrow S^m\vee S^{n-m}\vee C\stackrel{}{\larrow} X_{M,k},
\]
where
\[
C\simeq \Big(\mathop{\bigvee}\limits_{d_{m}-1} S^{m}\Big)\vee \Big(\mathop{\bigvee}\limits_{d_{n-m}-1} S^{n-m}\Big) \vee \Big(\mathop{\bigvee}\limits_{\substack{i\neq m, n-m \\ l \leq i \leq n-l}} \mathop{\bigvee}\limits_{d_i} S^{i} \Big) \vee \Big(\mathop{\bigvee}\limits_{l+1\leq i\leq n-l}  P^i(T_i) \Big); 
\]
\item
there is a homotopy fibration
\[
(P^{n}(k)\vee C)\rtimes \Omega (S^m\times S^{n-m})\stackrel{}{\larrow} X_{M,k}\stackrel{}{\larrow} S^m\times S^{n-m};
\]
\item the homotopy fibration splits after looping to give a homotopy equivalence 
   \[
   \Omega X_{M,k}\simeq\Omega (S^m\times S^{n-m})\times\Omega \big((P^{n}(k)\vee C)\rtimes \Omega (S^m\times S^{n-m})\big). 
 \] 
 \end{itemize}
  \end{theorem}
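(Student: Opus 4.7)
The plan is to reduce Theorem~\ref{Mp-thm} to Theorem~\ref{pdecom-thm}, with the bulk of the work being to identify the homotopy type of the common $(n-1)$-skeleton $\overline{M}$ of $M$ and $X_{M,k}$. Since $M$ is $(l-1)$-connected with $n \leq 3l-2$, the skeleton $\overline{M}$ is $(l-1)$-connected of dimension $n-1 \leq 3l-3$, which places it in the metastable range where wedge decompositions of highly connected complexes are available. Combining this with the prime hypothesis $2p < \max(m, n-m) + 4$ and the given decomposition $H_i(M) \cong \mathbb{Z}^{\oplus d_i} \oplus T_i$ (nontrivial only for $l \leq i \leq n-l$ by connectivity and Poincar\'{e} duality), I would show that after the specified localization
\[
\overline{M} \simeq S^m \vee S^{n-m} \vee C
\]
for the space $C$ described in the theorem, where one $S^m$ and one $S^{n-m}$ summand are singled out to carry the spherical pair. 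In particular $\overline{M}$ is then a co-$H$-space, satisfying the standing hypothesis of Theorem~\ref{pdecom-thm}.

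Next I would verify that $(a,b)$ is a spherical pair for $X_{M,k}$ with nonzero cup pairing. Since $a, b$ lie in degrees $m, n-m < n$, they restrict to classes on $\overline{M}$ and extend uniquely to $H^\ast(X_{M,k})$, while the detecting maps $s_1, s_2$ are taken as the inclusions of the designated sphere summands of $\overline{M}$. A direct computation, using that the top-cell attaching map of $X_{M,k}$ is $k$ times that of $M$ and that Whitehead products detect cup product pairings dual to the fundamental class, gives $\langle a \cup b, [X_{M,k}] \rangle = \pm k$. Consequently the quotient $\langle a \cup b, [X_{M,k}] \rangle / k = \pm 1$, so the only primes excluded by Theorem~\ref{pdecom-thm} are those with $2p < \max(m, n-m) + 4$, matching the hypothesis of Theorem~\ref{Mp-thm} exactly. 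The attaching map $k\mathfrak{g}$ is manifestly divisible by $k$, so Theorem~\ref{pdecom-thm} applies to $X_{M,k}$ and delivers all three stated conclusions simultaneously.

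The main obstacle is the first step: establishing the wedge splitting of $\overline{M}$ in the precise form written, with the Moore space summands $P^i(T_i)$ correctly indexed by the torsion of $H_\ast(M)$ as constrained by Poincar\'{e} duality, and with the $S^m$ and $S^{n-m}$ supporting the spherical pair split off compatibly with the choices of $s_1, s_2$. This requires careful management of attaching maps among cells in the middle range of $\overline{M}$, and the role of the prime restriction $2p < \max(m, n-m) + 4$ is precisely to eliminate the unstable obstructions of James--Hopf and Whitehead product type that would otherwise prevent such a splitting from existing integrally.
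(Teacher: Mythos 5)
Your proposal is correct and takes essentially the same route as the paper: identify the $(n-1)$-skeleton $\overline{M}$ as a wedge after localization, verify that $(a,b)$ is a spherical pair for $X_{M,k}$ with $\langle a\cup b, [X_{M,k}]\rangle = \pm k$, and then invoke Theorem~\ref{pdecom-thm}. The only organizational difference is that you aim to produce $\overline{M} \simeq S^m \vee S^{n-m} \vee C$ with the designated sphere summands and the explicit form of $C$ in one step, whereas the paper first shows abstractly that $\overline{M}$ lies in the class $\mathcal{M}$ of wedges of spheres and Moore spaces (via Lemma~\ref{pMlemma}, which cites a metastable-range splitting from Stanton--Theriault after checking the slightly weaker prime condition $2p \leq n-2l+3$ is subsumed), then lets Theorem~\ref{pdecom-thm} split off the spherical pair internally via the lifting lemmas, and finally identifies $C$ from the homology of $X_{M,k}$ using that $\mathcal{M}$ is closed under retracts. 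One small caution: the prime restriction $2p < \max(m,n-m)+4$ in the statement is not solely there for the skeletal wedge splitting as you suggest; it is the condition required by the lifting lemmas that produce left homotopy inverses to $s_1, s_2$, and the (weaker) condition needed for the skeletal splitting is shown in the paper to be implied by it.
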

The hypothesis concerning the existence of a pair $(a,b)$ in Theorem \ref{Mp-thm} is relatively mild. For example, if $n$ is even and $H_{\rm odd}(M;\mathbb{Q})$ is nontrivial, or if $n$ is odd and $H_{\rm odd<\frac{n+1}{2}}(M;\mathbb{Q})$ is nontrivial, then Poincar\'{e} duality ensures the existence of such a pair. Moreover, in the case when $k=1$, Theorem \ref{Mp-thm} may be compared with \cite[Theorem 8.6]{ST25}, where the assumptions differ; notably, our theorem does not require eliminating any specific torsion in the homology of $M$. In addition, we establish Theorem \ref{Wp-thm}, an analogue of Theorem \ref{Mp-thm}, for $X_{M,k}$ after killing the torsion in the homology of $M$. Together, Theorems \ref{Mp-thm} and \ref{Wp-thm} may be regarded as generalizations of the corresponding results of Stanton-Theriault \cite[Theorems 1.4 and 8.6]{ST25} and Basu-Basu \cite{BB19}.

$\, $

This paper is organized as follows.
Section \ref{sec: int} contains the proof of Theorem \ref{Zdecom-thm}.
In Section \ref{sec: loc}, we establish Theorem \ref{pdecom-thm} together with its geometric analogue, Theorem \ref{int-thm}.
Section \ref{sec: ap} is devoted to applications:
Subsection \ref{subsec: hyper} proves Theorem \ref{exp-thm} on local hyperbolicity;
Subsection \ref{subsec: inert1} establishes Theorem \ref{Zinert-thm} on inertness and non-inertness, along with its consequence for loop space homology, Theorem \ref{loopH-thm};
Subsection \ref{subsec: refine1} provides a refinement when $\overline{X}$ is a finite-type wedge of spheres and Moore spaces.
Finally, Section \ref{sec: PD} applies our results to Poincar\'{e} duality complexes and their variants $X_{M,k}$:
Subsection \ref{subsec: 2decomM} presents two decompositions arising from Theorems \ref{Zdecom-thm} and \ref{pdecom-thm};
Subsection \ref{subsec: inert2} proves Theorem \ref{QZinert-thm} on the gaps between rational and local or integral inertness;
Subsection \ref{subsec: refine2} proves Theorem \ref{Mp-thm} for highly connected $X_{M,k}$.

$\, $

\noindent{\bf Acknowledgements.} 
The author was supported in part by the National Natural Science Foundation of China (Grant nos. 12331003 and 12288201), the National Key R\&D Program of China (No. 2021YFA1002300) and the Youth Innovation Promotion Association of Chinese Academy Sciences.

The author would like to thank the anonymous referee for a careful reading of the manuscript and for many helpful comments.

\section{An integral case}
\label{sec: int}

Let $X$ be an $n$-dimensional simply connected capped complex with $n\geq 4$. 
Suppose that the $(n-1)$-skeleton $\overline{X}$ of $X$ satisfies  
\[
\overline{X}\simeq S^{m}\vee S^{n-m}\vee C
\]
for a space $C$ with $2\leq m\leq n-2$. Then $C$ is simply connected and of dimension less that $n$, and there is a homotopy cofibration
\begin{equation}\label{Xeq}
S^{n-1}\stackrel{g}{\larrow} S^{m}\vee S^{n-m}\vee C\stackrel{i}{\larrow} X
\end{equation}
where $g$ is the attaching map for the top $n$-cell of $X$ and $i$ is the inclusion of the $(n-1)$-skeleton. Since $S^m \vee S^{n-m}$ is a retract of $S^{m} \vee S^{n-m} \vee C$, the homotopy groups of $S^m \vee S^{n-m}$ split off from those of $S^{m} \vee S^{n-m} \vee C$. Accordingly, one may write $g\simeq g_s+g_r$, where $g_s$ is the component of $g$ on $S^m \vee S^{n-m}$ and $g_r$ collects all remaining components.

Suppose that the attaching map $g$ has the form 
\begin{equation}\label{geq}
g\simeq k\cdot \mathfrak{g} \simeq k\cdot [i_1, i_2]+k\cdot \omega,
\end{equation}
for some $k\in \mathbb{Z}$ and some map $\mathfrak{g}$, whose component on $S^m\vee S^{n-m}$ is the Whitehead product $[i_1,i_2]$ of the canonical inclusions $S^m\stackrel{i_1}{\larrow} S^m\vee S^{n-m}$ and $S^{n-m}\stackrel{i_2}{\larrow} S^m\vee S^{n-m}$, while its remaining components are collected into the map $\omega$.

In this section, we show a loop space decomposition of the capped complex $X$, thereby proving Theorem \ref{Zdecom-thm}.

\subsection{A homotopy cofibration}
\label{subsec: cof}
Our goal in this subsection is to prove a homotopy cofibration in Lemma \ref{cofib-lemma}, which will be used to establish a loop space decomposition of $X$.

Define the space $Q$ and the maps $r'$ and $\iota$ by the following homotopy cofibration diagram
\begin{equation}\label{CXQdiag}
\diagram 
                                                           & S^{n-1}   \rdouble \dto^{g}   & S^{n-1} \dto^{k[i_1, i_2]}\\
 C\rto^<<<<{i_3} \ddouble          & S^{m}\vee S^{n-m}\vee C \rto^<<<<{q_{12}} \dto^{i}  & S^m\vee S^{n-m}         \dto^{\iota}\\
C \rto^<<<<<<{c}              & X               \rto^{r'}              & Q,      
\enddiagram
\end{equation}
where the map $i_3$ is the inclusion into the third wedge summand with $c=i\circ i_3$, the map $q_{12}$ is the pinch map onto the first two wedge summands, the upper right square homotopy commutes by the assumption on $g$, and the middle column is the homotopy cofibration \eqref{Xeq}. In particular, the space $Q$ has the $CW$-structure
$
Q\simeq (S^m\vee S^{n-m})\cup e^n
$, 
and the map $r'$ induces the identity map in the top-dimensional homology.

In general, for a map $f\colon Y \to Z$ between two $n$-dimensional capped complexes $Y$ and $Z$, we define the {\it mapping degree} of $f$ to be the integer $k$ such that $f_\ast([Y])=k [Z]$. This notion generalizes the classical notion of mapping degree between oriented manifolds and is well defined up to sign.

\begin{lemma}\label{degr'lemma}
The mapping degree of the map $r'$ is
\[\hspace{6.7cm}
{\rm deg}(r')=1.
\hspace{6.7cm}\Box\]
\end{lemma}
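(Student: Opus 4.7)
The plan is to show that $r'$ sends the top $n$-cell of $X$ to the top $n$-cell of $Q$ by (essentially) the identity, so the degree is forced to be $1$. The cofibration diagram \eqref{CXQdiag} was engineered for precisely this: its upper right square homotopy commutes because $q_{12}\circ g\simeq k[i_1,i_2]$ by \eqref{geq}, which means the attaching map for the top cell of $Q$ factors through the attaching map of the top cell of $X$ via the pinch $q_{12}$. Consequently, the induced map on mapping cones identifies the two top cells.

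Concretely, I would read the degree off the Puppe sequences. Since $\dim C<n$ and $2\le m\le n-2$, one has $H_n(\overline X;\mathbb{Z})=H_n(S^m\vee S^{n-m};\mathbb{Z})=0$, so the pinch maps $X\to X/\overline X\simeq S^n$ and $Q\to Q/(S^m\vee S^{n-m})\simeq S^n$ induce isomorphisms
\[
H_n(X;\mathbb{Z})\xrightarrow{\;\cong\;}H_n(S^n;\mathbb{Z})\xleftarrow{\;\cong\;}H_n(Q;\mathbb{Z}).
\]
The middle column of \eqref{CXQdiag}, combined with the fact that $r'$ restricts to $q_{12}$ on the $(n-1)$-skeleton, yields a map between these two cofibration sequences in which the induced self-map of $S^n$ (arising from the identification of the two top cells) is homotopic to the identity. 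Naturality of the Puppe sequence then identifies $r'_*$ with the identity on $H_n(S^n;\mathbb{Z})$, so $r'_*([X])=\pm[Q]$ and hence $\deg(r')=1$.

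The only real obstacle is a bit of bookkeeping: one must fix the fundamental classes $[X]$ and $[Q]$ compatibly with a chosen generator of $H_n(S^n;\mathbb{Z})$, i.e., so that each is mapped to this generator under its respective pinch onto the top cell. Once that convention is adopted—which is the one implicit in the paper's definition of $[Y]$ for a capped complex $Y$—the sign ambiguity is resolved and no further computation is needed.
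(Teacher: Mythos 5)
Your proposal is correct and follows the same route the paper has in mind: the paper states the lemma with a terminal $\Box$ and no written argument, but the sentence immediately preceding it already asserts that $r'$ ``induces the identity map in the top-dimensional homology,'' which is exactly the fact you establish by naturality of the Puppe sequence applied to the map of cofibrations in Diagram~\eqref{CXQdiag}, whose restriction to $S^{n-1}$ is the identity. Your elaboration (noting $H_n(\overline X)=H_n(S^m\vee S^{n-m})=0$ so the pinch maps onto the top cells are $H_n$-isomorphisms, and fixing the orientation conventions for $[X]$ and $[Q]$) is a correct and adequate justification of this assertion.
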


Recall the Moore space $P^n(k)$ is defined by the homotopy cofibration
\[
S^{n-1}\stackrel{k}{\larrow}S^{n-1} \stackrel{\jmath}{\larrow}P^{n}(k),
\]
where $k$ is the degree $k$ map, and $\jmath$ is the inclusion of the bottom cell. 
In the following homotopy commutative diagram
\[
\diagram 
  S^{n-1}   \rdouble \dto^{k}      & S^{n-1}   \rdouble \dto^{g}   & S^{n-1} \dto^{k[i_1, i_2]}\\
  S^{n-1}  \rto^<<<<{\mathfrak{g}} \dto^{\jmath}       & S^{m}\vee S^{n-m}\vee C \rto^<<<<{q_{12}} \dto^{i}  & S^m\vee S^{n-m}         \dto^{\iota}\\
P^n(k) \rto^<<<<<<{\lambda}              & X               \rto^{r'}              & Q,      
\enddiagram
\]
the columns are homotopy cofibrations, the subdiagram formed by the last two columns is a copy of that in Diagram \eqref{CXQdiag}, and the subdiagram formed by the first two columns is obtained from \eqref{geq} with the induced map $\lambda$. Denote by $\gamma$ the composite along the bottom row
\[
\gamma:  P^{n}(k)\stackrel{\lambda}{\larrow} X\stackrel{r'}{\larrow} Q.
\]
Then the outer diagram of the preceding diagram is the upper part of the following homotopy cofibration diagram
\[\label{Qdiag}
\diagram
S^{n-1} \ddouble \rto^{k}  & S^{n-1} \dto^{[i_1, i_2]} \rto^{\jmath}   & P^{n}(k) \dto^{\gamma} \\
S^{n-1}            \rto^<<<<{k[i_1,i_2]}  & S^m\vee S^{n-m} \dto^{j}  \rto^<<<<<<{\iota}  & Q\dto^{\varphi}  \\
& S^m\times S^{n-m}\rdouble & S^m\times S^{n-m},                       
\enddiagram
\]
where $j$ is the inclusion of the $(n-1)$-skeleton of $S^m\times S^{n-m}$ and $\varphi$ is the induced map. 
\begin{lemma}\label{Qlemma}
There is a homotopy cofibration
\[
P^{n}(k)\stackrel{\gamma}{\larrow} Q\stackrel{\varphi}{\larrow}  S^m\times S^{n-m}
\]
such that the maps $\gamma$ and $\varphi$ restrict to the Whitehead product $[i_1, i_2]$ and the identity map on the $(n-1)$-skeletons, respectively. Moreover, the mapping degree of the map $\varphi$ is
\[
{\rm deg}(\varphi)=k.
\]
\end{lemma}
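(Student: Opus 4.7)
The plan is to extract the desired homotopy cofibration by invoking the $3\times 3$ lemma for homotopy cofibrations applied to the diagram displayed immediately before the statement. Its top row is the defining cofibration $S^{n-1}\stackrel{k}{\larrow}S^{n-1}\stackrel{\jmath}{\larrow}P^n(k)$ of the Moore space, its middle row is the defining cofibration $S^{n-1}\stackrel{k[i_1,i_2]}{\larrow}S^m\vee S^{n-m}\stackrel{\iota}{\larrow}Q$ of $Q$, and its third row is an identity on $S^m\times S^{n-m}$. The first column is the trivial cofibration given by the identity on $S^{n-1}$ followed by the map to a point, and the middle column $S^{n-1}\stackrel{[i_1,i_2]}{\larrow}S^m\vee S^{n-m}\stackrel{j}{\larrow}S^m\times S^{n-m}$ is the standard Whitehead-product cell decomposition of $S^m\times S^{n-m}$, which is likewise a cofibration. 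Commutativity of the upper-left square rests on the identity $[i_1,i_2]\circ k=k\cdot[i_1,i_2]$ in $\pi_{n-1}(S^m\vee S^{n-m})$, which holds since precomposition with the degree-$k$ self-map of $S^{n-1}$ agrees with multiplication by $k$ in the homotopy group. Applying the $3\times 3$ lemma for homotopy cofibrations (equivalently, the pasting property for iterated homotopy pushouts) then produces the third column $P^n(k)\stackrel{\gamma}{\larrow}Q\stackrel{\varphi}{\larrow}S^m\times S^{n-m}$ as a homotopy cofibration.

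The two restriction assertions are then read off directly: the relation $\varphi\circ\iota=j$, visible in the lower-right square of the diagram, says that $\varphi$ agrees with the standard skeletal inclusion on the $(n-1)$-skeleton $S^m\vee S^{n-m}$ of $Q$, while the relation $\gamma\circ\jmath=\iota\circ[i_1,i_2]$, visible in the upper-right square, says that $\gamma$ agrees, via $\iota$, with the Whitehead product on the bottom cell $S^{n-1}$ of $P^n(k)$.

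For the degree, I would apply the naturality of the long exact sequence in homology to the map of cofibre sequences with top sequence $S^{n-1}\stackrel{k[i_1,i_2]}{\larrow}S^m\vee S^{n-m}\stackrel{\iota}{\larrow}Q$, bottom sequence $S^{n-1}\stackrel{[i_1,i_2]}{\larrow}S^m\vee S^{n-m}\stackrel{j}{\larrow}S^m\times S^{n-m}$, left vertical the degree-$k$ map, middle vertical the identity, and right vertical $\varphi$. Because $2\leq m\leq n-2$, both $H_n(S^m\vee S^{n-m})$ and $H_{n-1}(S^m\vee S^{n-m})$ vanish, so in each long exact sequence the connecting homomorphism onto $H_{n-1}(S^{n-1})\cong\mathbb{Z}$ is an isomorphism. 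Naturality then forces $\varphi_\ast\colon H_n(Q)\to H_n(S^m\times S^{n-m})$ to correspond to multiplication by $k$ on $\mathbb{Z}$, so ${\rm deg}(\varphi)=k$.

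The only substantive ingredient is the $3\times 3$ lemma; the degree calculation reduces to a routine diagram chase, and the minor subtlety $k\cdot[i_1,i_2]=[i_1,i_2]\circ k$ is automatic for maps out of spheres. I therefore anticipate no real obstacle in carrying out the plan.
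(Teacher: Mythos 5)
Your proof is correct and follows essentially the same route as the paper: the paper likewise obtains the cofibration and the two skeletal restriction statements by reading off the displayed $3\times 3$-type diagram of cofibrations (the $3\times 3$ lemma for homotopy cofibrations, which you correctly invoke by name), and gets the degree from a homology exact sequence. The one small difference is in the degree computation: the paper's stated method is to use the long exact homology sequence of the newly produced cofibration $P^n(k)\to Q\to S^m\times S^{n-m}$ (which, since $H_n(P^n(k))=0$ and $H_{n-1}(Q)=0$ for $k\neq 0$, yields a short exact sequence $0\to\mathbb{Z}\to\mathbb{Z}\to\mathbb{Z}/k\to 0$ forcing the middle map to be $\pm k$), whereas you instead use naturality of the long exact sequences for the map of cofibre sequences $(\text{middle row})\to(\text{bottom column turned sideways})$, comparing the two connecting isomorphisms to $H_{n-1}(S^{n-1})$. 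Both arguments are routine and equivalent in substance; your version has the mild advantage of not needing to separate out the $k=0$ case (the paper's SES argument degenerates slightly there since $H_n(P^n(0))=\mathbb{Z}$), though the final answer is the same.
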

\begin{proof}
The first statement follows from the preceding homotopy cofibration diagram, and the second statement on the mapping degree follows from the long exact sequence of homology associated to the resulting homotopy cofibration. 
\end{proof}

Denote by $r$ and $\kappa$ the composites
\[
r: X\stackrel{r'}{\larrow} Q\stackrel{\varphi}{\larrow}  S^m\times S^{n-m} \ \  \ {\rm and} \ \ \ 
\kappa: P^{n}(k)\vee C\stackrel{\lambda \vee c}{\llarrow} X\vee X \stackrel{\nabla}{\larrow} X,
\]
respectively, where $\nabla$ is the folding map.

\begin{lemma}\label{degrlemma}
The mapping degree of the map $r$ is 
\[
{\rm deg}(r)=k.
\]
\end{lemma}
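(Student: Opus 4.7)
The plan is to exploit the fact that $r$ is by definition the composite $r = \varphi \circ r'$, together with the multiplicativity of mapping degree under composition of maps between $n$-dimensional capped complexes. Concretely, since $X$, $Q$ and $S^m\times S^{n-m}$ are all $n$-dimensional capped complexes (each has $H_n(-;\mathbb{Z})\cong\mathbb{Z}$), I can chase the fundamental class $[X]$ through the sequence
\[
X\stackrel{r'}{\larrow} Q\stackrel{\varphi}{\larrow} S^m\times S^{n-m}
\]
in top-dimensional homology.

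First I would invoke Lemma \ref{degr'lemma} to get $r'_\ast([X]) = [Q]$, and then Lemma \ref{Qlemma} to get $\varphi_\ast([Q]) = k\cdot [S^m\times S^{n-m}]$. Composing these two identities immediately yields
\[
r_\ast([X]) = \varphi_\ast\bigl(r'_\ast([X])\bigr) = \varphi_\ast([Q]) = k\cdot [S^m\times S^{n-m}],
\]
which, by definition of the mapping degree, gives $\deg(r)=k$.

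Since mapping degree is only defined up to sign, there is strictly nothing to check beyond the compatibility of orientations of the fundamental classes used in the two preceding lemmas; this is the only conceivable subtlety, and it is handled automatically because both $[Q]$ and $[S^m\times S^{n-m}]$ were fixed once and for all in the construction of Diagram \eqref{CXQdiag} and the cofibration diagram preceding Lemma \ref{Qlemma}. Thus the proof reduces to a one-line composition of the two earlier degree computations, with no genuine obstacle.
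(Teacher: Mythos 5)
Your proof is correct and is essentially identical to the paper's: both compute $\deg(r)=\deg(\varphi)\cdot\deg(r')=k\cdot 1=k$ by appealing to Lemma~\ref{degr'lemma} and Lemma~\ref{Qlemma} and the multiplicativity of degree under composition.
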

\begin{proof}
By Lemmas \ref{degr'lemma} and \ref{Qlemma}, we have 
${\rm deg}(r)={\rm deg}(\varphi)\cdot {\rm deg}(r')=k\cdot 1=k$.
\end{proof}

\begin{lemma}\label{cofib-lemma}
The sequence 
\[
P^{n}(k)\vee C\stackrel{\kappa}{\larrow} X\stackrel{r}{\larrow} S^m\times S^{n-m}
\]
is a homotopy cofibration.
\end{lemma}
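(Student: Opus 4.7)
The plan is to compute the homotopy cofibre of $\kappa$ by iterated homotopy pushouts, using the two cofibrations already constructed in this subsection. First I would extract from Diagram \eqref{CXQdiag} the homotopy cofibration
\[
C \stackrel{c}{\larrow} X \stackrel{r'}{\larrow} Q,
\]
which is obtained by collapsing the $C$-summand in the defining cofibration $S^{n-1}\to S^m\vee S^{n-m}\vee C\to X$; equivalently, it follows from the gluing lemma applied to the two right-hand columns of Diagram \eqref{CXQdiag}. This identifies $Q$ with the homotopy cofibre of $c$, with $r'$ as the associated quotient map.

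Next, I would observe that $\kappa$ is, by its very construction, the fold of $\lambda\colon P^{n}(k)\to X$ and $c\colon C\to X$. Its homotopy cofibre can therefore be computed in two stages: first push out along $c$ to collapse the summand $C$, which turns $X$ into $Q$ and carries the remaining map $\lambda$ to the composite $r'\circ\lambda = \gamma$; then collapse $P^{n}(k)$ along $\gamma$. By the pasting law for homotopy pushouts, the homotopy cofibre of $\kappa$ therefore coincides with the homotopy cofibre of $\gamma$, and the induced projection $X\to\mathrm{Cof}(\kappa)$ factors as $r'$ followed by the cofibre quotient for $\gamma$.

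Finally, I would invoke Lemma \ref{Qlemma}, which identifies the cofibre of $\gamma$ with $S^m\times S^{n-m}$ via the quotient map $\varphi$. Since $r$ was defined as the composite $\varphi\circ r'$, the induced projection $X\to\mathrm{Cof}(\kappa)$ is precisely $r$, yielding the claimed homotopy cofibration.

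The argument is essentially formal, powered entirely by the preparatory Diagram \eqref{CXQdiag} and Lemma \ref{Qlemma}, so I do not anticipate a genuine technical obstacle. The only point requiring care is verifying that the abstract quotient map supplied by the iterated pushouts is literally $r$ rather than merely a map of the correct mapping degree, and this is immediate from the very definition $r=\varphi\circ r'$.
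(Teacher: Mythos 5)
Your argument is correct but follows a genuinely different route from the paper's. The paper proves the lemma computationally: it forms the actual cofibre $K$ of $\kappa$ with quotient $r''$, runs the long exact sequence in homology to verify $H_\ast(K)\cong H_\ast(S^m\times S^{n-m})$, shows $r\circ\kappa$ is null on each wedge summand so that $r$ factors through $r''$ as some $t\colon K\to S^m\times S^{n-m}$, and then checks that $t$ has the correct effect in homology (via $\mathrm{deg}(t)=1$ and the restriction to the $(n-1)$-skeleton) so that $t$ is a homotopy equivalence by the Whitehead theorem. You instead compute the cofibre directly by pasting: since $\kappa$ is the fold of $\lambda$ and $c$, its cofibre is obtained by first taking the cofibre of $c$ — which Diagram \eqref{CXQdiag} already identifies as the bottom-row cofibration $C\to X\xrightarrow{r'}Q$ — and then taking the cofibre of the induced map $\gamma=r'\circ\lambda$, which Lemma \ref{Qlemma} identifies as $S^m\times S^{n-m}$ with quotient $\varphi$. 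The pasting law then identifies $\mathrm{Cof}(\kappa)$ with $\mathrm{Cof}(\gamma)$ and the induced quotient with $\varphi\circ r'=r$. Your approach is more structural and avoids both the homology computation and the null-homotopy check, effectively outsourcing all the work to the preparatory Diagram \eqref{CXQdiag} and Lemma \ref{Qlemma}; the paper's approach is more elementary and self-contained, verifying the answer directly rather than deriving it. Both are valid, and yours is arguably the cleaner conceptual explanation of why the cofibre is what it is.
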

\begin{proof}
Define the space $K$ and the map $r''$ by the homotopy cofibration
\[
P^{n}(k)\vee C\stackrel{\kappa}{\larrow} X\stackrel{r''}{\larrow} K.
\]
Consider the long exact sequence of homology associated to the homotopy cofibration
\[
\cdots \rightarrow H_{i}(P^{n}(k)\vee C;\mathbb{Z})\stackrel{\kappa_\ast}{\larrow} H_{i}(X;\mathbb{Z})\stackrel{r''_\ast}{\larrow} H_{i}(K;\mathbb{Z})\stackrel{\delta}{\larrow} H_{i-1}(P^{n}(k)\vee C;\mathbb{Z})\stackrel{\kappa_\ast}{\larrow} H_{i-1}(X;\mathbb{Z}) \rightarrow \cdots .
\]
When $i\geq n+1$, since the dimension of $C$ is less than $n$, it is clear that $H_{\geq n+1}(K;\mathbb{Z})\cong H_{\geq n+1}(X;\mathbb{Z})=0$. 
When $i\leq n-2$, by Diagram \eqref{CXQdiag} and the Five Lemma, the long exact sequence splits into short exact sequences
\[
\spreaddiagramcolumns{-0.7pc}\spreaddiagramrows{0pc} 
\diagram
0 \rto &  H_{i\leq n-2}(P^{n}(k)\vee C;\mathbb{Z}) \rto^<<<<<<{\kappa_\ast}   &  H_{i\leq n-2}(X;\mathbb{Z}) \rto^{r''_\ast}  &  H_{i\leq n-2}(K;\mathbb{Z})\rto^{}  & 0\\
0 \rto & H_{i\leq n-2}(C;\mathbb{Z}) \rto^<<<<<{i_{3\ast}}  \uto^{\cong}_{i_{2\ast}} &   H_{i\leq n-2}(S^m\vee S^{n-m}\vee C;\mathbb{Z})   \rto^<<<{q_{12\ast}} \uto^{\cong}_{i_{\ast}}  & H_{i\leq n-2}(S^m\vee S^{n-m};\mathbb{Z})\rto \uto^{\cong}_{}  & 0,
\enddiagram
\] 
inducing an isomorphism $H_{i\leq n-2}(K;\mathbb{Z}) \cong H_{i\leq n-2}(S^m\vee S^{n-m};\mathbb{Z})$. 
When $i=n$, by \eqref{Xeq} the exact sequence becomes
\[
0\stackrel{}{\larrow} \mathbb{Z}\stackrel{r''_\ast}{\larrow} H_{n}(K;\mathbb{Z})\stackrel{\delta}{\larrow} \mathbb{Z}/k\oplus H_{n-1}(C;\mathbb{Z})\stackrel{\kappa_\ast}{\larrow} H_{n-1}(X;\mathbb{Z})\cong  H_{n-1}(C;\mathbb{Z}),
\] 
where $\kappa_\ast$ restricts to the identity map on $H_{n-1}(C;\mathbb{Z})$. It follows that $H_n(K;\mathbb{Z})\cong \mathbb{Z}$, ${\rm deg}(r'')=k$ and then $H_{n-1}(K;\mathbb{Z})\stackrel{\delta}{\larrow} H_{n-2}(P^{n}(k)\vee C;\mathbb{Z})$ is injective by exactness. However, the connecting homomorphism $\delta$ is trivial by the injectivity of $\kappa_\ast$ on $H_{n-2}$, and so $H_{n-1}(K;\mathbb{Z})=0$. To summarize, we have showed that $H_\ast(K;\mathbb{Z})\cong H_\ast(S^m\times S^{n-m};\mathbb{Z})$, ${\rm deg}(r'')=k$ and $r''_\ast$ restricts to the projection $q_{12\ast}$ on the $(n-1)$-skeletons.

Next, we show that the composite $r\circ \kappa$ is null homotopic. Indeed, the composite $P^n(k)\stackrel{i_1}{\larrow} P^{n}(k)\vee C\stackrel{\kappa}{\larrow} X\stackrel{r}{\larrow}S^m\times S^{n-m}$ is homotopic to the composite 
$
r\circ \lambda\simeq \varphi\circ r' \circ \lambda \simeq \varphi \circ \gamma 
$, 
which is null homotopic by Lemma \ref{Qlemma}. Also, the composite $C\stackrel{i_2}{\larrow} P^{n}(k)\vee C\stackrel{\kappa}{\larrow} X\stackrel{r}{\larrow}S^m\times S^{n-m}$ is homotopic to $r\circ c\simeq \varphi\circ r'\circ c$, which is null homotopic as $C\stackrel{c}{\larrow} X\stackrel{r'}{\larrow} Q$ is a homotopy cofibration by \eqref{CXQdiag}. It follows that the restriction of $r\circ \kappa$ on each wedge summand is null homotopic, and then so is $r\circ \kappa$.

Since $P^{n}(k)\vee C\stackrel{\kappa}{\larrow} X\stackrel{r''}{\larrow} K$ is a homotopy cofibration and $r\circ \kappa$ is null homotopic, the map $r$ can be extended through $r''$ to a map $t: K\stackrel{}{\larrow} S^m\times S^{n-m}$:
\[
r: X\stackrel{r''}{\larrow} K\stackrel{t}{\larrow} S^m\times S^{n-m}.
\]
Recall by the previous discussion $r''_\ast$ restricts to the projection $q_{12\ast}$ on the $(n-1)$-skeletons. Since by Diagram \eqref{CXQdiag} and Lemma \ref{Qlemma} $X\stackrel{r'}{\larrow} Q$ and $Q\stackrel{\varphi}{\larrow} S^m\times S^{n-m}$ restricts to the pinch map $q_{12}$ and the identity map on the $(n-1)$-skeletons, respectively, their composite $r=\varphi\circ r'$ restricts to the pinch map $q_{12}$ on the $(n-1)$-skeletons. Hence, the induced homomorphism $t_\ast: H_{\leq n-1}(K;\mathbb{Z})\larrow H_{\leq n-1}(S^m\times S^{n-m};\mathbb{Z})$ is the identity map. 
Further, by Lemma \ref{degrlemma} and the previous discussion, we have $k={\rm deg}(r)={\rm deg}(t)\cdot {\rm deg}(r'')=k{\rm deg}(t)$, implying that ${\rm deg}(t)=1$. Therefore, the map $t$ induces the identity map on homology, and then is a homotopy equivalence by the Whitehead Theorem. This implies that $P^{n}(k)\vee C\stackrel{\kappa}{\larrow} X\stackrel{r}{\larrow} S^m\times S^{n-m}
$ 
is a homotopy cofibration.
\end{proof}

\subsection{An integral loop space decomposition}
\label{subsec: Zdecom}
To prove a loop space decomposition of $X$ from Lemma \ref{cofib-lemma}, we need to show that $\Omega r$ admits a right homotopy inverse. 

Denote by 
\[
s_1: S^{m}\stackrel{i_1}{\larrow} S^{m}\vee S^{n-m}\vee C \stackrel{i}{\larrow} X
 \ \ \ {\rm and} \ \ \ 
s_2: S^{n-m}\stackrel{i_2}{\larrow} S^{m}\vee S^{n-m}\vee C \stackrel{i}{\larrow} X.
\]
the inclusions of the spheres into $X$, where $i_1$ and $i_2$ denote the canonical inclusions of the spherical wedge summands by abuse of notation. 

\begin{lemma}\label{rlemma}
The composites $S^m\stackrel{s_1}{\larrow} X\stackrel{r}{\larrow}  S^m\times S^{n-m}$ and $S^{n-m}\stackrel{s_2}{\larrow} X\stackrel{r}{\larrow}  S^m\times S^{n-m}$ are homotopic to the canonical inclusions $S^{m}\stackrel{j_1}{\larrow}  S^m\times S^{n-m}$ and $S^{n-m}\stackrel{j_2}{\larrow}  S^m\times S^{n-m}$, respectively.
\end{lemma}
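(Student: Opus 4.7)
The plan is to chase the two composites through the homotopy cofibration diagrams that define $r'$ and $\varphi$, and to observe that both restrict to the standard inclusion behavior on the relevant skeleta.

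First I would unwind the definition of $r$. By construction $r = \varphi \circ r'$, where $r'\colon X \to Q$ is the induced map in Diagram \eqref{CXQdiag} and $\varphi\colon Q \to S^m\times S^{n-m}$ is the map from Lemma \ref{Qlemma}. From the middle square of Diagram \eqref{CXQdiag}, the composite $r' \circ i$ is homotopic to $\iota \circ q_{12}$, where $q_{12}\colon S^m\vee S^{n-m}\vee C\to S^m\vee S^{n-m}$ is the pinch map and $\iota\colon S^m\vee S^{n-m}\to Q$ is the inclusion of the $(n-1)$-skeleton of $Q$. Therefore
\[
r\circ s_1 \;=\; \varphi\circ r'\circ i\circ i_1 \;\simeq\; \varphi\circ \iota\circ q_{12}\circ i_1 \;\simeq\; \varphi\circ \iota\circ i_1,
\]
since $q_{12}\circ i_1$ is homotopic to the canonical inclusion $S^m\hookrightarrow S^m\vee S^{n-m}$ (which I continue to denote $i_1$).

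Next I would invoke Lemma \ref{Qlemma}, whose explicit statement is that $\varphi$ restricts to the identity map on the $(n-1)$-skeleton $S^m\vee S^{n-m}$; equivalently, $\varphi\circ \iota$ is homotopic to the inclusion $j\colon S^m\vee S^{n-m}\to S^m\times S^{n-m}$. Combined with the previous step this gives
\[
r\circ s_1 \;\simeq\; j\circ i_1 \;=\; j_1,
\]
the canonical inclusion of the first factor. The argument for $s_2$ is identical after replacing $i_1, j_1$ by $i_2, j_2$.

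I do not expect any serious obstacle: the statement is essentially a two-step diagram chase in which each step is explicitly recorded in Diagram \eqref{CXQdiag} and in Lemma \ref{Qlemma}. The only small point to be careful about is the distinction between the canonical inclusion $i_1\colon S^m\to S^m\vee S^{n-m}$ and the map $i_1\colon S^m\to S^m\vee S^{n-m}\vee C$ appearing in the definition of $s_1$; these are related by the pinch map $q_{12}$, and the composite $q_{12}\circ i_1$ is precisely the former, which is all that is needed.
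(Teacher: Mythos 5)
Your proposal is correct and is essentially the same diagram chase the paper uses: both compose the middle square of Diagram \eqref{CXQdiag} ($r'\circ i \simeq \iota\circ q_{12}$) with the fact from Lemma \ref{Qlemma} that $\varphi$ restricts to the inclusion $j$ on the $(n-1)$-skeleton, yielding $r\circ s_1 \simeq j\circ q_{12}\circ i_1 \simeq j_1$ and symmetrically for $s_2$.
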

\begin{proof}
By Diagram \eqref{CXQdiag} and Lemma \ref{Qlemma}, there is the homotopy commutative diagram 
\[
\diagram 
S^m \rto^<<<<{i_1} \ddouble          & S^{m}\vee S^{n-m}\vee C \rto^<<<<{q_{12}} \dto^{i}  & S^m\vee S^{n-m}         \dto^{\iota} \rdouble  & S^m\vee S^{n-m}  \dto^{j} \\
S^m \rto^<<<<<<{s_1}              & X               \rto^{r'}              & Q \rto^<<<<<<{\varphi}  & S^m\times S^{n-m}.      
\enddiagram
\]
Then 
$
r\circ s_1\simeq \varphi \circ  r'\circ s_1\simeq  j\circ q_{12}\circ i_1\simeq j_1
$. 
Similarly, $r\circ s_2\simeq j_2$.
\end{proof}

Observe that the inclusion $S^m\vee S^{n-m}\stackrel{i_1\vee i_2}{\llarrow} S^{m}\vee S^{n-m}\vee C \stackrel{i}{\larrow} X$ is homotopic the composite
\[
s: S^m\vee S^{n-m}\stackrel{s_1\vee s_2}{\llarrow}  X\vee X \stackrel{\nabla}{\larrow} X,
\]
where $\nabla$ denotes the folding map. 

\begin{lemma}\label{rlemma2}
The looped map $\Omega r: \Omega X\stackrel{}{\larrow}\Omega (S^m\times S^{n-m})$ has a right homotopy inverse. 
\end{lemma}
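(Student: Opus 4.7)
The plan is to construct a right homotopy inverse to $\Omega r$ directly from the looped sphere inclusions $\Omega s_1$ and $\Omega s_2$, combining them via the loop multiplication on $\Omega X$ and exploiting the standard splitting $\Omega(S^m\times S^{n-m})\simeq \Omega S^m\times \Omega S^{n-m}$. The inputs needed are exactly what Lemma~\ref{rlemma} provides.

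More precisely, I would define
\[
\theta: \Omega S^m\times \Omega S^{n-m}\stackrel{\Omega s_1\times \Omega s_2}{\larrow} \Omega X\times \Omega X \stackrel{\mu}{\larrow} \Omega X,
\]
where $\mu$ denotes the loop multiplication on $\Omega X$. Since $\Omega r$ is a loop map, it commutes with loop products up to homotopy, so
\[
\Omega r\circ \theta\simeq \mu_{S^m\times S^{n-m}}\circ \big(\Omega(r\circ s_1)\times \Omega(r\circ s_2)\big)\simeq \mu_{S^m\times S^{n-m}}\circ (\Omega j_1\times \Omega j_2),
\]
where the second homotopy uses Lemma \ref{rlemma} and the functoriality of $\Omega$. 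The composite $\mu_{S^m\times S^{n-m}}\circ(\Omega j_1\times \Omega j_2)$ is the classical homotopy equivalence $\Omega S^m\times \Omega S^{n-m}\stackrel{\simeq}{\larrow}\Omega(S^m\times S^{n-m})$ arising from the product splitting of the loop space of a product of spheres. Composing $\theta$ with a chosen homotopy inverse of this equivalence then yields a map $\Omega(S^m\times S^{n-m})\larrow \Omega X$ that is a right homotopy inverse to $\Omega r$.

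There is no serious obstacle in this step; the preceding lemmas were engineered precisely so that $r\circ s_i$ realizes the canonical inclusion $j_i$ on each sphere factor, after which the standard loop-product trick applies automatically. The only care required is to record that the homotopy $\Omega r\circ \mu\simeq \mu\circ(\Omega r\times \Omega r)$ is available because $\Omega r$ is a loop map, and to observe that the composite $\mu_{S^m\times S^{n-m}}\circ(\Omega j_1\times \Omega j_2)$ is genuinely the James-type equivalence rather than only a weak equivalence on homology.
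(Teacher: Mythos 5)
Your proof is correct and genuinely different from the paper's. The paper's argument goes through homology: it invokes the Bott--Samelson theorem to identify $H_\ast(\Omega(S^m\vee S^{n-m}))$ and $H_\ast(\Omega(S^m\times S^{n-m}))$ with $T(u,v)$ and $\mathbb{Z}[u,v]$, uses the Hilton--Milnor theorem to produce a right homotopy inverse $\phi$ of $\Omega j$, computes that $\Omega(r\circ s)_\ast$ and $(\Omega j)_\ast$ are both the abelianization morphism, concludes that $\Omega r\circ\Omega s\circ\phi$ induces the identity on homology, and appeals to the Whitehead theorem. You instead work entirely at the space level: you exploit that $\Omega r$ is a loop map so that it distributes over the loop multiplication, reduce via Lemma~\ref{rlemma} to the composite $\mu\circ(\Omega j_1\times\Omega j_2)$, and observe that this is homotopic to the inverse of the canonical homeomorphism $(\Omega p_1,\Omega p_2)\colon\Omega(S^m\times S^{n-m})\to\Omega S^m\times\Omega S^{n-m}$. (To see the last point, compose with $(\Omega p_1,\Omega p_2)$ and check that each component is homotopic to a projection, again using that $\Omega p_i$ is a loop map; so the composite is homotopic to the identity, forcing $\mu\circ(\Omega j_1\times\Omega j_2)$ to be a homotopy equivalence.) Your route is more elementary and avoids Bott--Samelson, Hilton--Milnor, and the Whitehead theorem; it does not even require the homology of the loop spaces to be known. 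The one thing the paper's more structured argument buys is the explicit map $\phi\colon\Omega(S^m\times S^{n-m})\to\Omega(S^m\vee S^{n-m})$, which it reuses verbatim in Remark~\ref{natremark} to get \emph{compatible} right homotopy inverses for two capped complexes at once; your construction would need a short extra observation (naturality of $\theta$ and of the equivalence $\mu\circ(\Omega j_1\times\Omega j_2)$, both of which do hold) to recover that.
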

\begin{proof}
By the Bott-Samelson Theorem, the Pontryagin algebras
\[
H_\ast(\Omega (S^{m}\vee S^{n-m});\mathbb{Z})\cong T(u, v) \ \ \ {\rm and} \ \ \
H_\ast(\Omega (S^{m}\times S^{n-m});\mathbb{Z})\cong \mathbb{Z}[u, v], 
\]
where $T(u, v)$ is the free tensor algebra on generators $u$ and $v$ of degrees $(m-1)$ and $(n-m-1)$, respectively. Under these isomorphisms, the looped composite
\[
\Omega(S^m\vee S^{n-m})\stackrel{\Omega s}{\larrow}\Omega X\stackrel{\Omega r}{\larrow}\Omega (S^m\times S^{n-m})
\]
induces an algebra homomorphism
\[ 
\Omega (r\circ s)_\ast: T(u, v)\stackrel{}{\longrightarrow} \mathbb{Z}[u, v].
\]
Note that the composite
\[
S^m\stackrel{i_1}{\larrow} S^m\vee S^{n-m}\stackrel{s}{\larrow} X\stackrel{r}{\larrow} S^m\times S^{n-m}
\]
is homotopic to $r\circ s_1$, and hence is homotopic to $j_1$ by Lemma \ref{rlemma}. It follows that $\Omega (r\circ s)_\ast (u)=u$. Similarly, $\Omega (r\circ s)_\ast(v)=v$. Hence, the algebra homomorphism $\Omega (r\circ s)_\ast$ is the abelianization morphism.

Further, by the Hilton-Milnor Theorem, the looped inclusion $\Omega (S^m\vee S^{n-m})\stackrel{\Omega j}{\larrow}\Omega (S^m\times S^{n-m})$ has a right homotopy inverse 
\[
\phi: \Omega (S^m\times S^{n-m})\stackrel{}{\larrow} \Omega(S^m\vee S^{n-m}).
\]
It follows that the algebra homomorphism $(\Omega j)_\ast$ is also the abelianization morphism, and the induced homomorphism $\phi_\ast: H_\ast(\Omega (S^m\times S^{n-m});\mathbb{Z}) \stackrel{}{\larrow} H_\ast(\Omega(S^m\vee S^{n-m});\mathbb{Z})$ on homology is a group monomorphism of $\mathbb{Z}[u, v]$ into $T(u, v)$. Therefore, the composite
\[
\Omega (S^m\times S^{n-m})\stackrel{\phi}{\larrow} \Omega(S^m\vee S^{n-m})\stackrel{\Omega s}{\larrow}\Omega X\stackrel{\Omega r}{\larrow}\Omega (S^m\times S^{n-m})
\]
induces the identity map on homology and hence is a homotopy equivalence by the Whitehead Theorem. 
\end{proof}

To prove Theorem \ref{Zdecom-thm}, we need a general decomposition theorem from~\cite{BT22}. 
Recall the \emph{right half-smash} of two pointed spaces $A$ and $B$ is the quotient space 
\[A\rtimes B=(A\times B)/\sim\] 
where $(\ast,b)\sim(\ast,\ast)$. It is well known that if $A$ is a co-$H$-space then there is a 
homotopy equivalence $A\rtimes B\simeq A\vee (A\wedge B)$.  

\begin{theorem} 
   \label{BT2} 
   Let 
   \(\nameddright{A}{f}{Y}{h}{Z}\) 
   be a homotopy cofibration. Suppose that $\Omega h$ has a right homotopy inverse. Then there is a homotopy fibration 
   \[\llnameddright{A\rtimes\Omega Z}{} 
           {Y}{h}{Z},\] 
           which splits after looping to give a homotopy equivalence 
   \[\hspace{5.75cm}\Omega Y\simeq\Omega Z\times\Omega(A\rtimes\Omega Z). 
         \hspace{5.75cm}\Box\] 
\end{theorem}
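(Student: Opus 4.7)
The plan is to identify the homotopy fiber $F$ of $h$ with $A\rtimes \Omega Z$, and then to use the right homotopy inverse of $\Omega h$ to split the looped fibration. Both steps are standard in the Beben-Theriault toolkit, but each involves some care.

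For the fiber identification, I would apply Mather's cube theorem to the homotopy pushout square underlying the cofibration $A\to Y\to Z$, pulling it back along the path-loop fibration $PZ\to Z$. Since the composite $A\to Y\to Z$ factors through the cone $CA$ and is therefore null-homotopic, the pullback of $A\to Z$ along $PZ\to Z$ splits as a product $A\times \Omega Z$. Writing $F$ for the homotopy fiber of $h$, this yields a homotopy pushout with corners $A\times \Omega Z$, $F$, $\Omega Z$, and $\ast$, from which a Mayer-Vietoris computation gives the additive isomorphism $\widetilde{H}_{\ast}(F)\cong \widetilde{H}_{\ast}(A)\otimes H_{\ast}(\Omega Z)\cong \widetilde{H}_{\ast}(A\rtimes \Omega Z)$. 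To upgrade this to an actual homotopy equivalence $A\rtimes \Omega Z\simeq F$, I would construct a map $A\rtimes \Omega Z\to Y$ by combining the inclusion $A\to Y$ with the adjoint $\sigma\colon \Sigma\Omega Z\to Y$ of the right inverse $s\colon \Omega Z\to \Omega Y$, arranged so that the resulting map on $A\times \Omega Z$ is trivial on the collapsed subspace $\{\ast\}\times \Omega Z$, and then verify via the Whitehead theorem that the lifted map on homotopy fibers realises the homology isomorphism above.

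For the splitting after looping, the composite
\[
\nameddright{\Omega Z\times \Omega(A\rtimes \Omega Z)}{s\times \Omega j}{\Omega Y\times \Omega Y}{\mu}{\Omega Y},
\]
where $j\colon A\rtimes \Omega Z\to Y$ denotes the fiber inclusion and $\mu$ the loop multiplication, is a weak equivalence: composing with $\Omega h$ yields the projection onto $\Omega Z$ (using $\Omega h\circ s\simeq \mathrm{id}$ and $\Omega h\circ \Omega j\simeq \ast$), so it defines a map of fibrations over $\Omega Z$ from the trivial product fibration with fiber $\Omega(A\rtimes \Omega Z)$ to the looped fibration $\Omega F\to \Omega Y\to \Omega Z$, and the Five Lemma on the homotopy long exact sequences concludes the argument.

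The hard part will be producing the map $A\rtimes \Omega Z\to F$ that realises the Mather cube identification at the space level, rather than merely at the level of homology. The naive formula $(a,\gamma)\mapsto \widetilde{\alpha}(a)\cdot \gamma$, built from the principal $\Omega Z$-action on $F$ together with a lift $\widetilde{\alpha}\colon A\to F$ of the inclusion, does not descend to the half-smash, because its restriction to $\{\ast\}\times \Omega Z$ is the (typically non-null) fiber inclusion rather than a constant map. Resolving this requires either a careful use of the right inverse $s$ to trivialise this restriction, or a reformulation that directly exploits the universal property of the Mather pushout together with the $\Omega Z$-equivariance of the data.
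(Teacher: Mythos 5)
The paper does not prove this theorem itself; it is quoted from Beben--Theriault \cite{BT22} (the $\Box$ closes the statement with no argument), so there is no internal proof to compare against. Evaluating your proposal on its own terms: the overall plan---Mather's cube theorem to pin down $H_{\ast}(F)$ additively, an explicit map $A\rtimes\Omega Z\to F$ realising that isomorphism, and then the section $s$ to split the looped fibration---is the correct and essentially standard route, and your splitting argument in the third paragraph is sound once the fibre identification is in hand. The genuine gap is the one you flag at the end, which you leave unresolved; but the ``hard part'' you identify is in fact not an obstruction, because the hypothesis on $\Omega h$ dissolves it.

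You correctly note that the natural map $\theta\colon A\times\Omega Z\to F$, $(a,\gamma)\mapsto\widetilde{\alpha}(a)\cdot\gamma$, restricts on $\{\ast\}\times\Omega Z$ to the connecting map $\partial\colon\Omega Z\to F$ of the fibration $F\to Y\stackrel{h}{\longrightarrow} Z$, and that in general $\partial$ need not be null, so $\theta$ need not descend to the half-smash. But the existence of $s$ forces $\partial$ to be null homotopic: in the extended fibration sequence $\Omega Y\stackrel{\Omega h}{\longrightarrow}\Omega Z\stackrel{\partial}{\longrightarrow}F\longrightarrow Y\stackrel{h}{\longrightarrow}Z$ the consecutive composite $\partial\circ\Omega h$ is null, so $\partial\simeq\partial\circ(\Omega h\circ s)\simeq\ast\circ s\simeq\ast$. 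A choice of null homotopy deforms $\theta$ to be constant on $\{\ast\}\times\Omega Z$, hence $\theta$ factors through a map $A\rtimes\Omega Z\to F$; this is then a homology isomorphism by your Mather/Mayer--Vietoris computation and a homotopy equivalence by the Whitehead theorem, under the simple-connectivity assumptions in force throughout the paper. Your alternative suggestion---``combining'' $f\colon A\to Y$ with the adjoint $\sigma\colon\Sigma\Omega Z\to Y$ of $s$---only yields a map out of $A\vee\Sigma\Omega Z$ and does not produce a map from $A\rtimes\Omega Z$ unless $A$ is a co-$H$-space; the action map $\theta$ together with the nullness of $\partial$ is what actually closes the argument.
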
 

We can now prove Theorem \ref{Zdecom-thm}.

\begin{proof}[Proof of Theorem \ref{Zdecom-thm}]
By Lemma \ref{cofib-lemma}, there is a homotopy cofibration 
\[
P^{n}(k)\vee C\stackrel{\kappa}{\larrow} X\stackrel{r}{\larrow} S^m\times S^{n-m}.
\]
By Lemma \ref{rlemma2}, the map $r$ has a right homotopy inverse after looping. Then Theorem \ref{BT2} can apply and the theorem follows.
\end{proof}

\begin{remark}\label{k=0remark}
In Theorem \ref{Zdecom-thm}, there is a degenerate case when $k=0$. In this case, the attaching map for the top cell of $X$ is null homotopic, and there is a homotopy equivalence
\[
X\simeq S^{m}\vee S^{n-m}\vee S^{n}\vee C.
\]
By Theorem \ref{Zdecom-thm}, there is a homotopy equivalence
\[
   \Omega X\simeq\Omega (S^m\times S^{n-m})\times\Omega \big((S^{n-1}\vee S^n\vee C)\rtimes \Omega (S^m\times S^{n-m})\big), 
\] 
and therefore
\[
\Omega (S^{m}\vee S^{n-m}\vee S^{n}\vee C) \simeq  \Omega (S^m\times S^{n-m})\times\Omega \big((S^{n-1}\vee S^n\vee C)\rtimes \Omega (S^m\times S^{n-m})\big).
\]
Although the homotopy type of $\Omega(S^{m}\vee S^{n-m}\vee S^{n}\vee C)$ is fixed, the particular formulation of the loop space decomposition differs from those obtained via the classical Ganea Theorem or Theorem~\ref{BT2}.
\end{remark}

\begin{remark}\label{k=1remark}
In Theorem \ref{Zdecom-thm}, if $k=1$ then
  \[
   \Omega X\simeq\Omega (S^m\times S^{n-m})\times\Omega \big(C\rtimes \Omega (S^m\times S^{n-m})\big). 
 \] 
This loop space decomposition agrees with that obtained for the Beben-Theriault complexes in \cite{BT14}, and thus provides a partial generalization of their result.
\end{remark}

\begin{remark}\label{natremark}
Theorem \ref{Zdecom-thm} satisfies a naturality property.
Let $X$ and $Y$ be two $n$-dimensional simply connected capped complexes with a homotopy cofibration diagram
\begin{equation}\label{nat-diag}
\diagram
S^{n-1} \dto^{t} \rto^<<<{k\mathfrak{g}}   &  S^{m}\vee S^{n-m}\vee C  \dto^{1\vee 1\vee \mathfrak{f}}  \rto^<<<<{i_X}  &  X\dto^{f} \\
S^{n-1}            \rto^<<<{\ell\mathfrak{h}}   &  S^{m}\vee S^{n-m}\vee D   \rto^<<<<{i_Y}  & Y,
\enddiagram
\end{equation}
for some spaces $C$ and $D$, $k, \ell, t\in \mathbb{Z}$ with $k=t \ell$ and $2\leq m\leq n-2$. Suppose that the components of $\mathfrak{g}$ and $\mathfrak{h}$ on $S^m\vee S^{n-m}$ are the Whitehead product of the inclusions $S^m\stackrel{i_1}{\larrow} S^m\vee S^{n-m}$ and $S^{n-m}\stackrel{i_2}{\larrow} S^m\vee S^{n-m}$.

We may go through the proof of Theorem \ref{Zdecom-thm} for both $X$ and $Y$ simultaneously, taking Diagram \eqref{nat-diag} into account. 
Since $k=t \ell$, there is the homotopy cofibration diagram
\[
\diagram
S^{n-1} \dto^{t} \rto^<<<{k[i_1, i_2]}   &  S^{m}\vee S^{n-m}  \ddouble  \rto^{}  &  Q_{k}\dto^{\varphi_{k\ell}} \\
S^{n-1}            \rto^<<<{\ell[i_1, i_2]}   &  S^{m}\vee S^{n-m}  \rto^{}  & Q_{\ell},
\enddiagram
\]
where the induced map $\varphi_{k\ell}$ is of degree $t$. By Diagram \eqref{CXQdiag}, there is a homotopy cofibration diagram
\[
\diagram
C \rto^{c_X} \dto^{\mathfrak{f}}  &  X \dto^{f} \rto^{r'_X}  & Q_{k}\dto^{\varphi_{k\ell}} \\
D                 \rto^{c_Y}               &             Y        \rto^{r'_Y}  & Q_{\ell}.
\enddiagram
\]
Also, from Diagram \eqref{nat-diag} the maps $i_X\circ \mathfrak{g}$ and $i_Y\circ \mathfrak{h}$ extend to the maps $\lambda_X$ and $\lambda_Y$ such that the diagram
\[
\diagram
P^n(k) \rto^<<<<{\lambda_X} \dto^{\jmath_{t}} & X \dto^{f}\\
P^n(\ell) \rto^<<<<{\lambda_Y}  &Y
\enddiagram
\]
homotopy commutes, where $\jmath_{t}$ is the identity map on the bottom cell and is of degree $t$ on the top cell. 
Denote by $\gamma_X$ and $\gamma_Y$ the composites
\[
\gamma_X:  P^{n}(k)\stackrel{\lambda_X}{\larrow} X\stackrel{r'_X}{\larrow} Q_{k} \ \ \ {\rm and} \ \ \
\gamma_Y:  P^{n}(\ell)\stackrel{\lambda_Y}{\larrow} Y\stackrel{r'_Y}{\larrow} Q_{\ell}.
\]
Then by Lemma \ref{Qlemma}, there is a homotopy cofibration diagram 
\[
\diagram
P^n(k) \rto^<<<<{\gamma_X} \dto^{\jmath_{t}} & Q_{k} \dto^{\varphi_{k\ell}} \rto^<<<<{\varphi_{k1}}  & S^m\times S^{n-m} \ddouble \\
P^n(\ell) \rto^<<<<{\gamma_Y}  &Q_{\ell} \rto^<<<<{\varphi_{\ell1}}  & S^m\times S^{n-m}  . 
\enddiagram
\]
Denote by $r_X$, $\kappa_X$, $r_Y$ and $\kappa_Y$ the composites
\[
\begin{split}
r_X: X\stackrel{r'_X}{\larrow} Q_k\stackrel{\varphi_{k1}}{\larrow}  S^m\times S^{n-m} \ \  \ &{\rm and} \ \ \ 
\kappa_X: P^{n}(k)\vee C\stackrel{\lambda_X \vee c_X}{\llarrow} X\vee X \stackrel{\nabla}{\larrow} X, \\
r_Y: Y\stackrel{r'_Y}{\larrow} Q_\ell\stackrel{\varphi_{\ell1}}{\larrow}  S^m\times S^{n-m} \ \  \ &{\rm and} \ \ \ 
\kappa_Y: P^{n}(\ell)\vee D\stackrel{\lambda_Y \vee c_Y}{\llarrow} Y\vee Y \stackrel{\nabla}{\larrow} Y, 
\end{split}
\]
respectively.
By Lemma \ref{cofib-lemma}, there is a homotopy cofibration diagram 
\begin{equation}
\label{nat-cofib-diag}
\diagram
P^n(k)\vee C \rto^<<<<{\kappa_X} \dto^{\jmath_{t}\vee \mathfrak{f}} & X \dto^{f} \rto^<<<<{r_X}  & S^m\times S^{n-m} \ddouble \\
P^n(\ell)\vee D \rto^<<<<{\kappa_Y}  &Y \rto^<<<<{r_Y}  & S^m\times S^{n-m}. 
\enddiagram
\end{equation}

We claim that the maps $\Omega r_X$ and $\Omega r_Y$ have compatible right homotopy inverses. Indeed, in the proof of Lemma \ref{rlemma2}, we have showed that the composites
\[
\begin{split}
&\Omega (S^m\times S^{n-m})\stackrel{\phi}{\larrow} \Omega(S^m\vee S^{n-m})\stackrel{\Omega s_X}{\larrow}\Omega X \stackrel{\Omega r_X}{\larrow}\Omega (S^m\times S^{n-m}),  \\ 
&\Omega (S^m\times S^{n-m})\stackrel{\phi}{\larrow} \Omega(S^m\vee S^{n-m})\stackrel{\Omega s_Y}{\larrow}\Omega Y \stackrel{\Omega r_Y}{\larrow}\Omega (S^m\times S^{n-m}) 
\end{split}
\]
induce the identity map on homology. Recall the inclusions $s_X$ and $s_Y$ factors as 
\[
s_X: S^m\vee S^{n-m}\stackrel{i_{12}}{\larrow} S^m\vee S^{n-m} \vee C\stackrel{i_X}{\larrow} X  \ \ {\rm and}   \ \
s_Y: S^m\vee S^{n-m}\stackrel{i_{12}}{\larrow} S^m\vee S^{n-m} \vee D \stackrel{i_Y}{\larrow} Y. 
\]
Then in the homotopy commutative diagram
\[
\diagram
\Omega(S^m\times S^{n-m}) \ddouble \rto^{\phi}   & \Omega (S^m\vee S^{n-m}) \ddouble \rto^<<<<{\Omega i_{12}}  & \Omega (S^m\vee S^{n-m} \vee C) \dto^{\Omega (1\vee 1\vee \mathfrak{f})} \rto^<<<<{\Omega i_X}  & \Omega X\dto^{\Omega f} \rto^<<<<{\Omega r_X}  & \Omega(S^m\times S^{n-m}) \ddouble\\
\Omega(S^m\times S^{n-m}) \rto^{\phi}   & \Omega (S^m\vee S^{n-m})  \rto^<<<<{\Omega i_{12}}  & \Omega (S^m\vee S^{n-m} \vee D)  \rto^<<<<{\Omega i_Y}  & \Omega Y \rto^<<<<{\Omega r_Y}  & \Omega(S^m\times S^{n-m}),
\enddiagram
\]
the two row composites induce the identity map on homology, and hence are compatible homotopy equivalences by the Whitehead Theorem. It follows that the maps $\Omega r_X$ and $\Omega r_Y$ have compatible right homotopy inverses. 

Therefore, for the homotopy cofibration diagram \eqref{nat-cofib-diag}, we can apply Theorem \ref{BT2} and its naturality property (see \cite[Remark 2.2]{HT22}, \cite[Remark 2.7]{The24a}, or \cite[Proposition 3.3]{Hua24}) to obtain a homotopy fibration diagram 
\[
\diagram
(P^{n}(k)\vee C)\rtimes \Omega (S^m\times S^{n-m}) \rto^{} \dto^{(\jmath_{t}\vee \mathfrak{f})\rtimes 1} &X \dto^{f} \rto^<<<<{r_X}  & S^m\times S^{n-m} \ddouble \\
(P^{n}(k)\vee D)\rtimes \Omega (S^m\times S^{n-m}) \rto^{}  &Y \rto^<<<<{r_Y}  & S^m\times S^{n-m},  
\enddiagram
\]
in which the rows splits after looping to give compatible homotopy equivalences
   \[
   \diagram
   \Omega (S^m\times S^{n-m})\times\Omega \big((P^{n}(k)\vee C)\rtimes \Omega (S^m\times S^{n-m})\big) \rto^<<<<{\simeq}  \dto^{1\times \Omega ((\jmath_{t}\vee \mathfrak{f})\rtimes 1)} & \Omega X \dto^{\Omega f}\\
      \Omega (S^m\times S^{n-m})\times\Omega \big((P^{n}(\ell)\vee D)\rtimes \Omega (S^m\times S^{n-m})\big) \rto^<<<<{\simeq}   &  \Omega Y.
   \enddiagram
 \] 
 
In conclusion, Theorem \ref{Zdecom-thm} is natural for maps of homotopy cofibrations as in Diagram \eqref{nat-diag}.
\end{remark}


\section{A local case}
\label{sec: loc}

In this section, we establish a local analogue of Theorem \ref{Zdecom-thm}, namely Theorem \ref{pdecom-thm}, and apply it to smooth manifolds with two essentially intersecting embedded spheres, thereby proving Theorem \ref{int-thm}.

\subsection{Two lifting lemmas}
\label{subsec: lift}
We start with two lifting lemmas in local category. 
Denote by $S^l\stackrel{i}{\larrow} K(\mathbb{Z}, l)$ the inclusion of the bottom cell of $K(\mathbb{Z}, l)$.

\begin{lemma} 
   \label{oddlift-lemma} 
   Let $X$ be a path-connected $CW$-complex of dimension $n$. Suppose that $a\in H^{2l+1}(X;\mathbb{Z})$ is represented by a map $f: X\stackrel{}{\larrow} K(\mathbb{Z}, 2l+1)$. Then after localization away from any prime $p$ satisfying $2l+2p-2<n$, there is a lift
   \[
   \diagram
   & S^{2l+1} \dto^{i} \\
   X\urto^{\widetilde{f}} \rto^<<<<<{f}  & K(\mathbb{Z}, 2l+1)
   \enddiagram
   \]
  for some map $\widetilde{f}$. 
  
Furthermore, after localization away from any prime $p$ satisfying $2l+2p-3<n$, the lift of $f$ is unique up to homotopy. In particular, if $\widetilde{f}^\ast: H^{2l+1}(S^{2l+1};\mathbb{Z})\stackrel{}{\larrow}  H^{2l+1}(X;\mathbb{Z})$ is trivial, then $\widetilde{f}$ is null homotopic.
\end{lemma}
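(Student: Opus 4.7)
The plan is to apply standard obstruction theory to the homotopy fibration
\[
F \larrow S^{2l+1} \stackrel{i}{\larrow} K(\mathbb{Z}, 2l+1),
\]
where $F$ denotes the homotopy fibre of $i$. The long exact sequence in homotopy gives that $F$ is $2l$-connected, with $\pi_j(F)\cong \pi_j(S^{2l+1})$ for every $j\geq 2l+2$.

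The crucial input will be the classical fact (due to Serre) that, for each prime $q$, the first $q$-primary element of $\pi_\ast(S^{2l+1})$ lying strictly above the fundamental class appears in degree $2l+2q-2$; the class in question is $\alpha_1$ for $q$ odd and the (suspension of the) Hopf class $\eta$ at $q=2$. Consequently, after inverting every prime $p$ satisfying $2l+2p-2<n$, each remaining prime $q$ satisfies $2l+2q-2\geq n$, so in the localized category the fibre $F$ becomes $(n-1)$-connected.

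With this connectivity estimate in place, I would construct the lift $\widetilde{f}$ cell by cell: the successive obstructions live in $H^{k+1}(X;\pi_k(F))$ with $k\geq n$, and all of them vanish because $\dim X=n$. For the uniqueness statement, under the sharper hypothesis $2l+2p-3<n$ the same reasoning upgrades $F$ to being $n$-connected after localization, so the obstructions to a homotopy between any two lifts lie in $H^k(X;\pi_k(F))$ with $k\geq n+1$ and again all vanish.

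For the final claim, observe that $i^\ast\colon H^{2l+1}(K(\mathbb{Z},2l+1);\mathbb{Z})\to H^{2l+1}(S^{2l+1};\mathbb{Z})$ is an isomorphism, so if $\widetilde f^\ast$ vanishes on $H^{2l+1}$ then so does $f^\ast = \widetilde f^\ast\circ i^\ast$, forcing $f\simeq\ast$; then both $\widetilde f$ and the constant map are lifts of the trivial map, and the uniqueness part gives $\widetilde f\simeq\ast$. I expect the main technical point to be the careful bookkeeping of the localization (inverting a set of primes rather than a single one) together with verifying that the estimate for the first $q$-torsion of $\pi_\ast(S^{2l+1})$ applies uniformly at every prime, including $q=2$; once these are settled, the obstruction-theoretic argument is routine.
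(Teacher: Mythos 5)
Your proposal is correct and uses exactly the standard obstruction-theoretic argument that underlies the cited reference: consider the fibration over $K(\mathbb{Z},2l+1)$ with total space $S^{2l+1}$, use Serre's theorem that the first $q$-torsion in $\pi_\ast(S^{2l+1})$ occurs in degree $2l+2q-2$ (valid uniformly, including at $q=2$ via $\eta$) to get the required connectivity of the localized fibre, and then kill all lifting (resp.\ homotopy-of-lifts) obstructions by comparing the connectivity of the fibre with $\dim X = n$. The paper's own proof is just a citation of \cite[Lemma 2.2]{Hua25} plus the $X\times I$ trick for uniqueness, which is the same relative obstruction-theory argument you spell out directly.
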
 
\begin{proof}
The first statement follows from \cite[Lemma 2.2]{Hua25} in which the finite condition is clearly not necessary, and the second statement follows by applying the first statement to $X\times I$.
\end{proof}

\begin{lemma} 
   \label{evenlift-lemma} 
   Let $X$ be a path-connected $CW$-complex of dimension $n$. Suppose that $a\in H^{2l}(X;\mathbb{Z})$ is represented by a map $f: X\stackrel{}{\larrow} K(\mathbb{Z}, 2l)$ and $a^2=0$. Then after localization away from any prime $p$ satisfying $2l+2p-4<n$, there is a lift
   \[
   \diagram
   & S^{2l} \dto^{i} \\
   X\urto^{\widetilde{f}} \rto^<<<<<{f}  & K(\mathbb{Z}, 2l)
   \enddiagram
   \]
  for some map $\widetilde{f}$. 
  
Furthermore, after localization away from any prime $p$ satisfying $2l+2p-5<n$, the lift of $f$ is unique up to homotopy. In particular, if $\widetilde{f}^\ast: H^{2l}(S^{2l};\mathbb{Z})\stackrel{}{\larrow}  H^{2l}(X;\mathbb{Z})$ is trivial, then $\widetilde{f}$ is null homotopic.
\end{lemma}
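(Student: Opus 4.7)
The plan parallels Lemma \ref{oddlift-lemma}: I would run obstruction theory against the fibration $F\larrow S^{2l}\stackrel{i}{\larrow} K(\mathbb{Z},2l)$, where $F$ denotes the homotopy fibre of the inclusion of the bottom cell. Since $i$ induces an isomorphism on $\pi_{2l}$, a quick inspection of the long exact sequence shows that $F$ is $2l$-connected with $\pi_k(F)\cong\pi_k(S^{2l})$ for $k>2l$. Hence the obstructions to lifting $f\colon X\larrow K(\mathbb{Z},2l)$ through $i$ live in the cohomology groups $H^{k+1}(X;\pi_k(S^{2l}))$ for $2l+1\leq k\leq n-1$.

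The feature distinguishing the even case from the odd one is the primary obstruction at $k=4l-1$, detected by the free cyclic summand of $\pi_{4l-1}(S^{2l})$ generated by the Whitehead square $[\iota_{2l},\iota_{2l}]$. A classical identification, going back to Steenrod, equates this obstruction with the cup square $a\smile a\in H^{4l}(X;\mathbb{Z})$, which vanishes by the hypothesis $a^2=0$. What remains are torsion obstructions: at an odd prime $p$, the EHP sequence shows that the first $p$-torsion of $\pi_\ast(S^{2l})$ above dimension $2l$ arises at dimension $2l+2p-3$, as the suspension of the $\alpha_1$-element from $\pi_\ast(S^{2l-1})_{(p)}$, while at $p=2$ the Hopf class $\eta$ contributes already at dimension $2l+1$. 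Localizing away from every prime $p$ with $2l+2p-4<n$ kills all of these torsion obstructions in the admissible range $k\leq n-1$, giving existence of $\widetilde{f}$. This is the even-sphere analogue of \cite[Lemma 2.2]{Hua25}, which I would invoke directly in place of redoing the $p$-local bookkeeping.

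For the uniqueness statement, I would apply the same obstruction calculation to the relative lifting problem over the cylinder $X\times I$ of dimension $n+1$; the admissible range for $k$ shifts from $k\leq n-1$ to $k\leq n$, which turns the localization condition into $2l+2p-5<n$. The final assertion, that $\widetilde{f}^\ast=0$ on $H^{2l}(S^{2l};\mathbb{Z})$ forces $\widetilde{f}$ to be null homotopic, is then immediate: both $\widetilde{f}$ and the constant map lift the trivial map $X\larrow K(\mathbb{Z},2l)$, so they must agree up to homotopy by uniqueness. The main technical obstacle is pinning down the primary obstruction as the cup square $a\smile a$ and verifying that no secondary torsion obstruction sneaks in before the $p$-local vanishing range begins; I expect this bookkeeping to mirror that of the odd case.
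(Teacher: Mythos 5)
Your existence argument is sound and takes a different organizational route from the paper. The paper defers to \cite[Lemma 7.4]{The24b}, which factors the lifting problem through $\Omega S^{2l+1}$: one first lifts to $\Omega S^{2l+1}$ using the $p$-local connectivity of the fibre of $\Omega S^{2l+1}\to K(\mathbb{Z},2l)$ (so that only torsion obstructions arise), and then descends to $S^{2l}$ via the James--Hopf fibration, where the hypothesis $a^2=0$ enters. You instead run obstruction theory directly against $S^{2l}\to K(\mathbb{Z},2l)$, identify the free (degree $4l$) obstruction with the cup square, and kill the torsion obstructions by localization. Both routes produce the same numerics for existence; the James detour isolates the cup-square obstruction structurally rather than by an explicit classical identification.

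Your uniqueness argument, however, has a genuine gap: you do not address the free $\mathbb{Z}$ summand of $\pi_{4l-1}(F)\cong\pi_{4l-1}(S^{2l})$. For the relative problem over $(X\times I, X\times\partial I)$ the obstruction in question sits in $H^{4l}(X\times I,X\times\partial I;\pi_{4l-1}(F))\cong H^{4l-1}(X;\pi_{4l-1}(F))$, which is one degree below $4l$. The cup-square identification you used for existence therefore does not carry over to this difference obstruction, and since the summand is free, localization does nothing to it. Concretely, if $\dim X\geq 4l-1$ this obstruction can be nonzero: for $X=S^{4l-1}$ the Whitehead square $[\iota_{2l},\iota_{2l}]$ and the constant map are both lifts of the trivial map to $K(\mathbb{Z},2l)$, yet they remain non-homotopic after inverting any set of primes. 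So "the same obstruction calculation" does not transfer to the cylinder as claimed; the proof of uniqueness either needs a dimension restriction (roughly $\dim X<4l-1$) or a separate argument disposing of the rational difference class, neither of which appears in your write-up.
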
 
\begin{proof}
The first statement follows from \cite[Lemma 7.4]{The24b}. Note that the localization here is taken away from any prime $p$ with $2l+2p-4<n$, in contrast to the condition $2l+2p-4\leq n$ used there. This refinement arises because in the proof it suffices to use the connectivity of the homotopy fibre of $\Omega S^{2l+1}\larrow K(\mathbb{Z}, 2l)$, rather than requiring the map to be a homotopy equivalence. The second statement follows by applying the first statement to $X\times I$.
\end{proof}

\subsection{A local loop space decomposition}
\label{subsec: pdecom}

Let $X$ be an $n$-dimensional simply connected capped complex with a fundamental class $[X]$. 
Suppose that $(a, b)\in H^{m}(X;\mathbb{Z})\times  H^{n-m}(X;\mathbb{Z})$ with $2\leq m\leq n-2$ is a {\it spherical pair} of $X$, that is, $a^2=0$ and $b^2=0\in H^\ast(X;\mathbb{Z})$, and there exist maps
\[
s_1: S^m\larrow X \ \ \ {\rm and} \ \ \  s_2: S^{n-m}\larrow X
\]
such that 
$s_1^\ast(a)$ and $s_2^\ast(b)$ are generators of $H^{m}(S^m;\mathbb{Z})$ and $H^{n-m}(S^{n-m};\mathbb{Z})$, respectively.

\begin{lemma}\label{plemma}
After localization away from any prime $p$ satisfying $2p<{\rm max}(m, n-m)+4$, the maps $s_1$ and $s_2$ have left homotopy inverses  
\[
r_1: X\stackrel{}{\larrow} S^{m} \ \ \ {\rm and} \ \ \  r_2: X\stackrel{}{\larrow} S^{n-m}, 
\]
respectively. Suppose further that the $(n-1)$-skeleton $\overline{X}$ of $X$ is a co-$H$-space. Then 
\[
\overline{X}\simeq S^{m}\vee S^{n-m}\vee C
\]
for a simply connected space $C$ of dimension less than $n$, such that the cohomology classes $a\in H^{m}(X;\mathbb{Z})$ and $b\in H^{n-m}(X;\mathbb{Z})$ correspond to the spherical wedge summands $S^m$ and $S^{n-m}$, respectively.
\end{lemma}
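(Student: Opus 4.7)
The plan is to establish the two assertions of the lemma in sequence: first, constructing the left homotopy inverses $r_1,r_2$ by applying the lifting lemmas of Subsection~\ref{subsec: lift} to the cohomology classes $a$ and $b$; and second, assuming the co-$H$-structure on $\overline{X}$, combining these retractions with the comultiplication to split off $S^m\vee S^{n-m}$ as a wedge summand.

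For the first part I would represent $a\in H^m(X;\mathbb{Z})$ as a map $X\to K(\mathbb{Z},m)$ and lift it through the bottom-cell inclusion $S^m\to K(\mathbb{Z},m)$ using Lemma~\ref{oddlift-lemma} when $m$ is odd and Lemma~\ref{evenlift-lemma} when $m$ is even (where $a^2=0$ is needed). The stated localization condition is calibrated exactly for this purpose: in the worst, even case the lift of a degree-$d$ class on an $n$-dimensional complex exists after inverting the primes with $2p<n-d+4$, and taking $d=m$ and $d=n-m$ and combining gives the bound $\max(m,n-m)+4$. The resulting lift $\widetilde{a}\colon X\to S^m$ pulls back a generator of $H^m(S^m;\mathbb{Z})$ to $a$, so $\widetilde{a}\circ s_1\colon S^m\to S^m$ has degree $\pm 1$ by the spherical-pair hypothesis $s_1^\ast(a)=\pm\sigma_m$; normalizing the sign yields $r_1$ with $r_1\circ s_1\simeq\mathrm{id}_{S^m}$, and the symmetric argument applied to $b$ produces $r_2$.

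For the wedge decomposition, cellular approximation factors $s_1,s_2$ through the inclusion $\overline{X}\hookrightarrow X$ as maps $\bar{s}_1,\bar{s}_2$ (using $m,n-m<n$), and composing $r_j$ with this inclusion yields retractions $\bar{r}_j$ of $\bar{s}_j$. Set $\bar{s}:=\bar{s}_1\vee\bar{s}_2$ with mapping cone $q\colon\overline{X}\to C$. Using the comultiplication $\mu\colon\overline{X}\to\overline{X}\vee\overline{X}$, I would define
\[
\Phi:=\bigl((\bar{r}_1\vee\bar{r}_2)\vee q\bigr)\circ(\mu\vee\mathrm{id})\circ\mu\colon\overline{X}\longrightarrow S^m\vee S^{n-m}\vee C,
\]
and verify that $\Phi$ induces an isomorphism on integral homology. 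The argument is the standard one: since $p_i\circ\mu\simeq\mathrm{id}$ forces $\mu_\ast$ to be the diagonal on reduced homology, the retraction $\bar{r}_\ast$ witnesses a split short exact sequence $0\to\tilde H_\ast(S^m\vee S^{n-m})\to\tilde H_\ast(\overline{X})\to\tilde H_\ast(C)\to 0$, and $\Phi_\ast$ realizes the induced direct-sum isomorphism $\tilde H_\ast(\overline{X})\cong \tilde H_\ast(S^m\vee S^{n-m})\oplus\tilde H_\ast(C)$. The Whitehead theorem then upgrades $\Phi$ to an equivalence, as all spaces in sight are simply connected; the cofibre $C$ inherits simple connectivity and dimension less than $n$ from $\overline{X}$, and the correspondence between $a,b$ and the spherical wedge summands is immediate because $\bar{r}_1,\bar{r}_2$ represent $a,b$.

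The main obstacle I anticipate is the degenerate case $m=n-m$: the cross composites $\bar{r}_1\circ\bar{s}_2$ and $\bar{r}_2\circ\bar{s}_1$ then live in the same dimension as the diagonal composites, and the $2\times 2$ matrix governing $(\bar{r}\circ\bar{s})_\ast$ has determinant $1-s_2^\ast(a)\cdot s_1^\ast(b)$, which need not be a unit in the localized ring. Resolving this requires replacing $\bar{r}_1,\bar{r}_2$ by integer linear combinations using the group structure on $[\overline{X},S^m]$ supplied by the co-$H$-structure to diagonalize the matrix of degrees. When $m\neq n-m$ the cross terms land in vanishing homology groups and this adjustment is unnecessary, so the rest of the argument proceeds uniformly.
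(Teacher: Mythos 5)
Your treatment of the lifting step and of the co-$H$ splitting is the same as the paper's, and the novelty in your writeup is the flag you raise on the case $m = n-m$. The flag is well placed: the paper simply asserts that $\Phi := (\overline{r}_1\vee\overline{r}_2\vee\overline{q})\circ\sigma$ induces a homology isomorphism, and in degree $m$ (when $m=n-m$) this requires the matrix $\begin{pmatrix}1 & s_2^\ast(a)\\ s_1^\ast(b) & 1\end{pmatrix}$ governing $\overline{r}_\ast\circ\overline{s}_\ast$ to be unimodular, which the lemma's hypotheses do not force.

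Your remedy, however, does not close the gap. Integer row operations preserve the determinant $1-s_1^\ast(b)\,s_2^\ast(a)$ up to sign, so no linear combination of $\overline{r}_1,\overline{r}_2$ can turn the matrix into the identity unless that determinant is already $\pm 1$. A concrete test: $X = S^m\times S^m$, $a = x$, $b = y$, and $s_1 = s_2 = \Delta$ the diagonal. Then $\overline{s}_1$ and $\overline{s}_2$ are both the pinch $i_1 + i_2$, so $\overline{s}_\ast$ has matrix $\begin{pmatrix}1&1\\1&1\end{pmatrix}$; its cofibre $C$ has $\widetilde{H}_{m+1}(C)\cong\mathbb{Z}$ while $\widetilde{H}_{m+1}(\overline{X})=0$, so $\Phi$ fails to be a homology isomorphism for every choice of retractions. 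What rescues the conclusion for this $X$ is a different choice of $s_1,s_2$ (the coordinate inclusions $j_1,j_2$), not a different choice of retractions — so the maps $s_1,s_2$ are what must be normalized, and your fix acts on the wrong side. Worse, taking $a=b=x$ with $s_1=s_2=j_1$ satisfies every hypothesis of the lemma but makes its conclusion false outright, since one class cannot simultaneously correspond to two distinct spherical wedge summands. The missing ingredient, for both your argument and the paper's, is a non-degeneracy hypothesis such as $\langle a\cup b,[X]\rangle\neq 0$ (which is present in Theorem~\ref{pdecom-thm}, where the lemma is invoked), together with a short argument that the representing maps $s_1,s_2$ can be chosen so that the cross terms $s_2^\ast(a)$ and $s_1^\ast(b)$ vanish.
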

\begin{proof}
Localize away from any prime $p$ satisfying $2p<{\rm max}(m, n-m)+4$. 

(1). Denote by $X\stackrel{a}{\larrow}K(m, \mathbb{Z})$ and $X\stackrel{b}{\larrow}K(n-m, \mathbb{Z})$ the two maps representing the spherical elements $a$ and $b$, respectively. 
Then the composites 
\[
S^m\stackrel{s_1}{\larrow} X\stackrel{a}{\larrow}K(m, \mathbb{Z}) \ \ \ {\rm and} \ \ \  
S^{n-m}\stackrel{s_2}{\larrow} X\stackrel{b}{\larrow}K(n-m, \mathbb{Z})
\]
represent the generators of $H^{m}(S^m;\mathbb{Z})$ and $H^{n-m}(S^{n-m};\mathbb{Z})$, respectively, and hence are homotopic to the bottom cell inclusions. 
By Lemmas \ref{oddlift-lemma} and \ref{evenlift-lemma}, the maps $a$ and $b$ can be lifted to some maps 
\[
r_1: X\stackrel{}{\larrow} S^{m} \ \ \ {\rm and} \ \ \  r_2: X\stackrel{}{\larrow} S^{n-m}, 
\]
respectively. Then $r_1\circ s_1$ and $r_2\circ s_2$ lifts the maps $a\circ s_1$ and $b\circ s_2$, respectively. Since the latter maps are homotopic to the bottom cell inclusions, it follows that $r_1\circ s_1$ and $r_2\circ s_2$ are degree one self-maps, and therefore are homotopic to the identity maps. 

(2). Observe that by degree reasons the maps $s_1$ and $s_2$ factor through the $(n-1)$-skeleton $\overline{X}$ of $X$ to define maps $\overline{s}_1: S^{m}\stackrel{}{\larrow}  \overline{X}$ and $\overline{s}_2: S^{n-m}\stackrel{}{\larrow}  \overline{X}$. By (1) they have left homotopy inverses 
\[
\overline{r}_1 : \overline{X} \stackrel{}{\larrow}X \stackrel{r_1}{\larrow} S^{m} \ \ \ {\rm and} \ \ \ 
\overline{r}_2: \overline{X} \stackrel{}{\larrow} X \stackrel{r_2}{\larrow} S^{n-m},
\]
respectively. 
Define the space $C$ and the map $\overline{q}$ by the homotopy cofibration
\[
S^{m}\vee S^{n-m}\stackrel{\overline{s}}{\larrow}  \overline{X} \stackrel{\overline{q}}{\larrow} C,
\]
where $\overline{s}$ denotes the composite $S^m\vee S^{n-m}\stackrel{\overline{s}_1\vee \overline{s}_2}{\llarrow}  \overline{X} \vee \overline{X}  \stackrel{\nabla}{\larrow} \overline{X}$. Then the composite
\[
\overline{X}\stackrel{\sigma}{\larrow} \overline{X}\vee \overline{X} \vee \overline{X}\stackrel{\overline{r}_1\vee \overline{r}_2\vee \overline{q}}{\lllarrow} S^{m}\vee S^{n-m}\vee C
\]
induces an isomorphism on homology, where $\sigma$ is the iterated comultiplication of the co-$H$-space $\overline{X}$. By the Whitehead Theorem, the composite is a homotopy equivalence. Under this homotopy equivalence, it is clear that the cohomology classes $a\in H^{m}(X;\mathbb{Z})$ and $b\in H^{n-m}(X;\mathbb{Z})$ correspond to the spherical wedge summands $S^m$ and $S^{n-m}$, respectively.
\end{proof}

We can now prove Theorem \ref{pdecom-thm}.

\begin{proof}[Proof of Theorem \ref{pdecom-thm}]
Localize away from any prime $p$ satisfying $2p<{\rm max}(m, n-m)+4$. Since the $(n-1)$-skeleton $\overline{X}$ of $X$ is a co-$H$-space, by Lemma \ref{plemma} $\overline{X}\simeq S^{m}\vee S^{n-m}\vee C$ and there is a homotopy cofibration
\[
S^{n-1} \stackrel{g}{\larrow} S^m\vee S^{n-m}\vee C\stackrel{}{\larrow} X
\]
for some attaching map $g$.

By assumption, the attaching map $g\simeq k\mathfrak{g}$ for some map $S^{n-1} \stackrel{\mathfrak{g}}{\larrow} S^m\vee S^{n-m}\vee C$. By the Hilton-Milnor Theorem, the component of $\mathfrak{g}$ on $S^m\vee S^{n-m}$ 
\[
S^{n-1} \stackrel{\mathfrak{g}}{\larrow} S^m\vee S^{n-m}\vee C\stackrel{q_{12}}{\larrow} S^m\vee S^{n-m}
\]
has the form $\mathfrak{g}_1+\mathfrak{g}_2+\ell [i_1, i_2]$ for some $\ell\in \mathbb{Z}$, and some maps $S^{n-1}\stackrel{\mathfrak{g}_1}{\larrow} S^m$ and $S^{n-1}\stackrel{\mathfrak{g}_2}{\larrow} S^{n-m}$. 
Observe that $\mathfrak{g}_1$ and $\mathfrak{g}_2$ induce trivial homomorphisms on cohomology, and the maps are localized away from any prime $p$ satisfying $2p<{\rm max}(m, n-m)+4$. Then the uniqueness of lifting in Lemmas \ref{oddlift-lemma} and \ref{evenlift-lemma} can apply to show that $\mathfrak{g}_1$ and $\mathfrak{g}_1$ are null homotopic. It follows that 
\[
 g\simeq k\mathfrak{g}\simeq k\ell [i_1, i_2]+k\omega,
\]
where $\omega$ collects all other component of $\mathfrak{g}$. 

In particular, there is the homotopy cofibration diagram 
\[
\diagram
S^{n-1} \rto^<<<<{g} \ddouble   & S^m\vee S^{n-m}\vee C \rto^{} \dto^{q_{12}}  & X\dto^{}\\
S^{n-1} \rto^<<<<{k\ell [i_1, i_2]}  \dto^{k\ell}  & S^m\vee S^{n-m} \rto^{}  \ddouble  & Q' \dto^{}\\
S^{n-1}     \rto^<<<<{ [i_1, i_2]}   & S^m\vee S^{n-m} \rto^{j} & S^m\times S^{n-m} 
\enddiagram
\]
defining the space $Q'$. It implies that 
\[
k\ell=\langle a\cup b, [Q']\rangle=\langle a\cup b, [X]\rangle,
\]
where by abuse of notation the cohomology classes $a$ and $b$ correspond to the spherical wedge summands $S^m$ and $S^{n-m}$, respectively. Since $\langle a\cup b, [X]\rangle\neq 0$ by assumption, we see that $k$, $\ell\neq 0$. 

Localize further away from any prime $p$ satisfying $p~|~\ell$. Denote 
\[
g'=k\mathfrak{g}'=k[i_1, i_2]+\frac{k}{l}\omega.
\]
Then $g' \circ \ell \simeq kl[i_1, i_2]+k\omega\simeq g$, and there is the homotopy cofibration diagram 
\[
\diagram
S^{n-1} \rto^<<<<{g} \dto^{\ell}   & S^m\vee S^{n-m}\vee C \rto^{} \ddouble  & X\dto^{}\\
S^{n-1} \rto^<<<<{g'}                                          & S^m\vee S^{n-m}\vee C  \rto^{}                                  & X',
\enddiagram
\]
defining the space $X'$. Since $\ell$ is a homotopy equivalence, the Five Lemma implies that $X\stackrel{}{\larrow} X'$ induces an isomorphism on homology. Therefore, it is a homotopy equivalence by the Whitehead Theorem. In particular, there is a homotopy cofibration 
\[
S^{n-1} \stackrel{k\mathfrak{g}'}{\llarrow} S^m\vee S^{n-m}\vee C\stackrel{}{\larrow} X
\]
such that the component of $\mathfrak{g}'$ on $S^m\vee S^{n-m}$ is the Whitehead product $[i_1, i_2]$. Then Theorem \ref{Zdecom-thm} can apply and the theorem follows.
\end{proof}

\subsection{A geometric case}
\label{subsec: geo}
Theorem \ref{pdecom-thm} can be applied to a geometric case when two embedded spheres intersect essentially in a manifold. 

\begin{proof}[Proof of Theorem \ref{int-thm}]
It is clear that a simply connected closed manifold is a capped complex. Localize away from any prime $p$ satisfying $2p<{\rm max}(m, n-m)+4$ or $p~|~\langle a\cup b, [M]\rangle$. Then the restrictions of the cohomology classes $a=(s_{1\ast}([S^m]))^\ast$ and $b=(s_{2\ast}([S^{n-m}]))^\ast$ on the embedded spheres $S^m$ and $S^{n-m}$ are the generators of the homology $H^{m}(S^m)$ and $H^{n-m}(S^{n-m})$, respectively. Combining assumption, it follows that $(a,b)$ is a spherical pair of $M$ such that $\langle a\cup b, [M]\rangle \neq 0$. Hence, Theorem \ref{pdecom-thm} with $k=1$ can apply to prove the theorem.
\end{proof}


\section{Applications}
\label{sec: ap}

In this section, we present applications to local hyperbolicity, to inertness and non-inertness, and to a refinement of Theorem \ref{Zdecom-thm} in the case when $\overline{X}$ is a finite-type wedge of spheres and Moore spaces. In particular, we establish Theorems \ref{exp-thm}, \ref{Zinert-thm}, and \ref{loopH-thm}.

\subsection{Local hyperbolicity}
\label{subsec: hyper}
To prove Theorem \ref{exp-thm} on the local hyperbolicity of the capped complex $X$ in Theorem \ref{Zdecom-thm}, we need an auxiliary lemma. 
\begin{lemma}\label{semi-lemma}
Let $A$, $B$ and $Y$ be three path-connected spaces. Then there is a natural homeomorphism
\[
(A\vee B)\rtimes Y\cong  (A\rtimes Y )\vee (B\rtimes Y ).
\]
\end{lemma}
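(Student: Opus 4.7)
The plan is straightforward: unwind the definition of the right half-smash in terms of pushouts and interchange a pushout with a quotient.

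First I would observe that, by definition of the wedge, $(A\vee B)\times Y$ is the pushout of the diagram
\[
(A\times Y) \longleftarrow (\{\ast\}\times Y) \longrightarrow (B\times Y),
\]
where the two maps are induced by the inclusions $\{\ast\}\hookrightarrow A$ and $\{\ast\}\hookrightarrow B$. Explicitly, $(A\vee B)\times Y \cong (A\times Y) \sqcup_{\{\ast\}\times Y} (B\times Y)$. This is a general fact because $-\times Y$ preserves pushouts in $\mathbf{Top}$ (the relevant inclusions are closed cofibrations when we take $A$ and $B$ to be pointed CW-complexes, but even set-theoretically this pushout description is immediate from the construction of the wedge).

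Next I would form the half-smash by collapsing $\{\ast\}\times Y$ inside $(A\vee B)\times Y$. Since $\{\ast\}\times Y$ sits in both copies $A\times Y$ and $B\times Y$ via the pushout, the quotient $(A\vee B)\rtimes Y = \big((A\vee B)\times Y\big)/(\{\ast\}\times Y)$ identifies the common subspace $\{\ast\}\times Y$ of each factor to a single basepoint. Thus the quotient map restricted to the $A$-part gives $A\rtimes Y$ and the restriction to the $B$-part gives $B\rtimes Y$, with both pieces sharing only the new basepoint. This produces a canonical continuous bijection
\[
(A\vee B)\rtimes Y \longrightarrow (A\rtimes Y)\vee (B\rtimes Y),
\]
which one checks is a homeomorphism (its inverse is assembled from the obvious inclusions $A\rtimes Y \hookrightarrow (A\vee B)\rtimes Y$ and $B\rtimes Y \hookrightarrow (A\vee B)\rtimes Y$ induced by the wedge inclusions $A\hookrightarrow A\vee B$ and $B\hookrightarrow A\vee B$).

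Finally, for naturality, I would note that the entire construction is functorial in $A$, $B$, and $Y$: a triple of pointed maps $A\to A'$, $B\to B'$, $Y\to Y'$ induces compatible maps on both sides via the pushout/quotient description, and the bijection above is evidently compatible with these induced maps.

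There is essentially no obstacle here; the statement is a categorical consequence of the compatibility of products with wedges and the compatibility of quotients with colimits. The only mild care needed is to ensure the continuous bijection is actually a homeomorphism, which follows by exhibiting the inverse explicitly as above (or, if one prefers, by observing that both sides have the quotient topology coming from $(A\sqcup B)\times Y$ after the same identifications).
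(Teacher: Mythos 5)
Your argument is correct and follows the same route as the paper: identify $(A\vee B)\times Y$ as the pushout $(A\times Y)\cup_{\{\ast\}\times Y}(B\times Y)$ and then collapse the common subspace $\{\ast\}\times Y$ to obtain the wedge $(A\rtimes Y)\vee(B\rtimes Y)$. You are a bit more explicit about verifying the continuous bijection is a homeomorphism, but the underlying decomposition and the naturality observation are the same.
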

\begin{proof}
 Observe that 
 \[
 (A\vee B)\rtimes Y\cong (A\times Y)\cup_{(\ast\times Y)}(B\times Y).
 \]
 Collapsing $Y$ to a point then gives
 \[
 (A\vee B)\rtimes Y\cong (A\rtimes Y )\cup_{(\ast\rtimes \ast)} (B\rtimes Y )=  (A\rtimes Y )\vee (B\rtimes Y ).
 \]
 This identification is obviously natural.
\end{proof}

The following important result was proved by Huang-Wu \cite{HW20} and Boyde \cite{Boy24}.
\begin{lemma}\label{BHWlemma}
Let $p$ be a prime, $n\geq 3$ and $r\geq 1$. For the Moore space $P^{n}(p^r)$ the following hold: 
\begin{itemize}
\item if $p$ is odd, then $P^{n}(p^r)$ is $\mathbb{Z}/p^s$-hyperbolic for any $s\leq r+1$;
\item if $p=2$ and $r\geq 2$, then $P^{n}(2^r)$ is $\mathbb{Z}/2^s$-hyperbolic for any $s\leq r+1$;
\item if $p=2$ and $r=1$, then $P^{n}(2)$ is $\mathbb{Z}/2$-, $\mathbb{Z}/4$- and $\mathbb{Z}/8$-hyperbolic.
\end{itemize}
\end{lemma}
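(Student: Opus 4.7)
The plan is to establish each clause by exhibiting inside $\pi_\ast(P^n(p^r))$ an exponentially growing family of torsion summands of the required orders. The main tool is a product decomposition of $\Omega P^n(p^r)$, from which the summands can be read off on the level of homotopy groups, supplemented by a Bockstein-style bootstrap that detects the higher-order torsion.

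For odd primes $p$, I would first invoke the Cohen--Moore--Neisendorfer product decomposition of $\Omega P^{2n+1}(p^r)$, which expresses this loop space, up to $p$-local homotopy, as a weak infinite product whose factors are indexed by a Hall basis of the free graded Lie algebra on two generators. Each atomic factor carries at least one $\mathbb{Z}/p^r$ summand in its homotopy groups, concentrated in a specific range of degrees, and by Witt's formula the number of basic Lie brackets of weight $k$ grows like $2^k/k$. Counting these summands degree by degree gives exponential growth of $\mathbb{Z}/p^r$ summands in $\pi_\ast(P^{2n+1}(p^r))$. The even-dimensional case $P^{2n}(p^r)$ is then handled by reducing to the odd-dimensional one, using the standard cofibration relating $P^{2n}(p^r)$ to $P^{2n+1}(p^r)$ together with the James--Milnor type suspension splitting, both of which preserve the exponential growth of torsion summands after looping.

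To upgrade from $\mathbb{Z}/p^r$ to $\mathbb{Z}/p^{r+1}$ summands, I would follow Boyde's strategy in \cite{Boy24}: inside each factor of the CMN decomposition, suitably chosen mod $p^r$ classes admit Bockstein lifts to elements of order $p^{r+1}$ in the integral homotopy groups, and these lifts inherit the exponential count from the underlying Hall basis. The same plan applies verbatim at the prime $p=2$ provided $r\geq 2$, where the $2$-primary analogue of the CMN product decomposition of $\Omega P^n(2^r)$ holds, giving both $\mathbb{Z}/2^r$- and $\mathbb{Z}/2^{r+1}$-hyperbolicity.

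The main obstacle is the case $p=2$, $r=1$, where the CMN-type decomposition of $\Omega P^n(2)$ is not available. Here I would instead use the Cohen--Wu splitting of $\Omega\Sigma(X\vee Y)$, applied to a wedge decomposition of $\Sigma(P^{n-1}(2)\wedge P^{n-1}(2))$, to extract a weak infinite product decomposition of $\Omega P^n(2)$ with enough atomic summands to establish $\mathbb{Z}/2$-hyperbolicity by a Hilton--Milnor type count. Bootstrapping this up to $\mathbb{Z}/4$- and $\mathbb{Z}/8$-hyperbolicity is the delicate step, and is precisely where I would appeal to the secondary composition arguments of \cite{Boy24}; the reason the conclusion stops at $\mathbb{Z}/8$ rather than continuing indefinitely reflects the classical vanishing pattern of the higher Bocksteins on $P^n(2)$.
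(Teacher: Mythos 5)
The paper's proof of this lemma is a one-line citation: it follows by combining \cite[Theorem~1.6]{HW20} and \cite[Theorem~1.3]{Boy24}. Your proposal instead sketches the machinery underlying those theorems --- CMN-type product decompositions of $\Omega P^n(p^r)$ from \cite{CMN79,CMN87,Coh89}, a Hilton--Milnor count of atomic factors, and a Bockstein-style bootstrap to higher-order torsion --- which is a genuinely different and considerably heavier route. The plan is broadly consistent in spirit with how the cited references proceed, but several details are imprecise: the CMN decomposition is not itself a weak infinite product indexed by a Hall basis of the free Lie algebra on two generators (the Hall/Witt counting enters only after applying Hilton--Milnor to the infinite wedge of Moore spaces appearing as a factor); CMN also give a decomposition for even-dimensional Moore spaces directly in \cite{CMN79}, so no reduction of $P^{2n}(p^r)$ to $P^{2n+1}(p^r)$ via a cofibration is needed; and the heuristic that the $p=2$, $r=1$ list terminates at $\mathbb{Z}/8$ because of ``vanishing of higher Bocksteins on $P^n(2)$'' is not clearly what \cite{Boy24} actually uses. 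Most importantly, your sketch already defers the crucial torsion-upgrade steps (from $\mathbb{Z}/p^r$ to $\mathbb{Z}/p^{r+1}$, and from $\mathbb{Z}/2$ to $\mathbb{Z}/4$ and $\mathbb{Z}/8$) entirely to \cite{Boy24}, so it is not an independent proof; since the lemma is simply a combination of two published theorems, citing both directly, as the paper does, is both cleaner and safer than re-deriving them.
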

\begin{proof}
The lemma is a combination of \cite[Theorem 1.6]{HW20} and \cite[Theorem 1.3]{Boy24}.
\end{proof}

We are now in a position to prove Theorem \ref{exp-thm}; a stronger statement concerning homotopy groups will be established later in Lemma \ref{Znoninert-lemma}. 

\begin{proof}[Proof of Theorem \ref{exp-thm}]
Recall for the capped complex $X$ in Theorem \ref{Zdecom-thm} there is a homotopy cofibration
\[
S^{n-1}\stackrel{k\mathfrak{g}}{\larrow} S^{m}\vee S^{n-m}\vee C\stackrel{}{\larrow} X.
\]

If $k=0$, the homotopy cofibration for $X$ splits and there is a homotopy equivalence
\[
X\simeq S^{m}\vee S^{n-m}\vee C \vee S^{n}.
\]
By \cite[Theorem 1]{Boy22}, $S^{m}\vee S^{n-m}$ is $\mathbb{Z}/p^r$-hyperbolic for all primes $p$ and all $r\geq 1$. Then so is $X$. Also, it is well known that $S^{m}\vee S^{n-m}$ is rationally hyperbolic, and so is $X$.

If $k\neq 0$, $\pm 1$, by Theorem \ref{Zdecom-thm} there is a homotopy equivalence
\[
   \Omega X\simeq\Omega (S^m\times S^{n-m})\times\Omega \big((P^{n}(k)\vee C)\rtimes \Omega (S^m\times S^{n-m})\big). 
   \]
   By Lemma \ref{semi-lemma}, $P^{n}(k)\rtimes \Omega (S^m\times S^{n-m})$ retracts off $(P^{n}(k)\vee C)\rtimes \Omega (S^m\times S^{n-m})$, and then $\Omega (P^{n}(k)\rtimes \Omega (S^m\times S^{n-m}))$ retracts off $\Omega X$. However, by the James suspension splitting theorem, 
      \[\begin{split}
   P^{n}(k)\rtimes \Omega (S^m\times S^{n-m})
   &\simeq P^{n}(k)\vee \Big( P^{n}(k) \wedge \big(  \Omega S^m  \vee \Omega S^{n-m}  \vee(  \Omega S^m \wedge \Omega S^{n-m}  ) \big)  \Big)\\
   &\simeq P^{n}(k)\vee \Big( P^{n}(k)\wedge  \mathop{\bigvee}\limits_{j=1}^{\infty}\big(S^{j(m-1)}\vee S^{j(n-m-1)}\big) \Big)  \\ 
   & \ \ \ \ \ \ \ \ \ \  \ \ \vee  \Big( P^{n}(k) \wedge  \mathop{\bigvee}\limits_{i, j=1}^{\infty}\big(S^{i(m-1)+j(n-m-1)}\big) \Big),
   \end{split}
   \]
   and hence is a homotopy equivalent to a finite-type wedge of Moore spaces $P^{n\ast}(k)$. 
Since $P^{n\ast}(k)\simeq \mathop{\bigvee}\limits_{j=1}^\ell P^{n\ast}(p_j^{t_j})$ if $k=\mathop{\prod}\limits_{j=1}^\ell p_j^{t_j}$, the theorem in this case follows from Lemma \ref{BHWlemma}. 
\end{proof}

\subsection{Inertness and non-inertness, I}
\label{subsec: inert1}
In this subsection, we prove Theorems \ref{loopH-thm} and \ref{Zinert-thm}. 
Let $A\stackrel{h}{\longrightarrow} Y\stackrel{\varphi}{\longrightarrow} Z$ be a homotopy cofibration. Recall the map $h$ is called {\it inert} if $\Omega \varphi$ has a right homotopy inverse. 

 We start with the inert case.
 
 \begin{lemma}\label{Zinert-lemma}
 Let $X$ be the capped complex in Theorem \ref{Zdecom-thm}. If $k=\pm 1$, then the attaching map for the top cell of $X$ is inert. 
 \end{lemma}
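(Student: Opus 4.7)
The plan is to produce an explicit right homotopy inverse to $\Omega i$ by unpacking the loop-space decomposition already established in Theorem~\ref{Zdecom-thm}. Since $P^n(\pm 1)$ is contractible, Remark~\ref{k=1remark} specialises the decomposition to $\Omega X \simeq \Omega(S^m\times S^{n-m})\times \Omega F$, where $F := C\rtimes \Omega(S^m\times S^{n-m})$, arising from the homotopy fibration $F \stackrel{\alpha}{\to} X \stackrel{r}{\to} S^m\times S^{n-m}$. It then suffices to construct a map from each product factor into $\Omega(S^m\vee S^{n-m}\vee C)$ and combine them via the loop multiplication.

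On the base factor I would take $\sigma_1 := (\Omega i_{12})\circ \phi\colon \Omega(S^m\times S^{n-m})\to \Omega(S^m\vee S^{n-m}\vee C)$, where $\phi$ is the Hilton--Milnor right inverse used in the proof of Lemma~\ref{rlemma2}; that proof already shows that $\Omega i \circ \sigma_1$ recovers the inclusion of the first factor of the splitting of $\Omega X$. For the fiber factor I plan to construct a lift $\tilde\alpha\colon F\to S^m\vee S^{n-m}\vee C$ of the fiber map $\alpha$ through $i$ and set $\sigma_2 := \Omega\tilde\alpha$. Once this is available, the composite $\sigma := \mu\circ(\sigma_1\times \sigma_2)$ with the loop multiplication $\mu$ gives a map $\Omega X\to \Omega(S^m\vee S^{n-m}\vee C)$, and one verifies factor-by-factor that $\Omega i\circ \sigma \simeq \mathrm{id}_{\Omega X}$ using the product decomposition together with the relation $i\circ \tilde\alpha\simeq \alpha$.

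The main obstacle is constructing $\tilde\alpha$. Morally this is clear: the fiber map $\alpha$ is the extension produced by Theorem~\ref{BT2} from the inclusion $c\colon C\to X$ together with a chosen right inverse to $\Omega r$, and both ingredients factor through $\Omega i$ -- the first because $c = i\circ i_3$ (from Diagram~\eqref{CXQdiag}), and the second because the right inverse to $\Omega r$ constructed in Lemma~\ref{rlemma2} equals $\Omega i\circ \sigma_1$. Formalising the claim that the whole construction of $\alpha$ therefore factors through $i$ is the delicate step: it amounts to a Mather-cube-style comparison between the homotopy fibre of $r$ and that of $r\circ i$, preserving the identifications of both fibres with right half-smashes in a way compatible with $i$. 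This is the only non-routine part of the argument; with $\tilde\alpha$ in hand, the assembly of $\sigma$ and the check that $\Omega i\circ \sigma \simeq \mathrm{id}$ are formal.
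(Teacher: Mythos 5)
Your plan takes a genuinely different route from the paper, and it contains a real gap that you yourself flag but do not close. The paper's argument is short and formal: when $k=\pm 1$ the Moore space $P^n(\pm1)$ is contractible and Diagram~\eqref{CXQdiag} becomes a square of homotopy cofibrations in which $q_{12}$ has a right inverse before looping (so $i_3$ is inert) and $j$ has a loop section by the Hilton--Milnor theorem; the inertness of $i$ is then immediate from the gluing theorem for inert maps, \cite[Theorem~5.4]{Hua24}. Your proposal instead tries to construct an explicit section of $\Omega i$ by splitting $\Omega X$ as in Theorem~\ref{Zdecom-thm}. The $\sigma_1$ piece and the loop-multiplication assembly, using that $\Omega i$ is an $H$-map, are fine. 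What is missing is the lift $\tilde\alpha\colon F\to\overline X$ of the fibre map $\alpha\colon F=C\rtimes\Omega(S^m\times S^{n-m})\to X$ through the $(n-1)$-skeleton inclusion $i$, and that is not a formality.

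There is no obstruction-theoretic shortcut to producing $\tilde\alpha$: the domain $F$ is infinite-dimensional, so the fact that $i$ is an $(n-1)$-equivalence is of no direct help. The heuristic that ``both ingredients of the extension factor through $i$'' does not give the lift either. In Theorem~\ref{BT2} the map $\alpha$ is built from a Ganea-type action on $X$ together with the chosen right inverse of $\Omega r$, and that right inverse is a map into $\Omega X$, not into $\Omega\overline X$; so the construction of $\alpha$ does not manifestly take place inside $\overline X$. To make your lift rigorous one would need to (i) show $\Omega(r\circ i)$ has a right inverse, (ii) identify the homotopy fibre of $r\circ i$ with a right half-smash of $C$ compatibly with the identification already made for the fibre of $r$, and (iii) carry out the Mather-cube comparison you allude to. This is precisely the content of the general result the paper cites, \cite[Theorem~5.4]{Hua24}; pushing your approach to completion would amount to re-proving that theorem in this special case rather than applying it.
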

 \begin{proof}
When $k=\pm 1$, Diagram \eqref{CXQdiag} becomes the homotopy cofibration diagram 
\[
\diagram 
                                                           & S^{n-1}   \rdouble \dto^{\mathfrak{g}}   & S^{n-1} \dto^{\pm [i_1, i_2]}\\
 C\rto^<<<<{i_3} \ddouble          & S^{m}\vee S^{n-m}\vee C \rto^<<<<{q_{12}} \dto^{i}  & S^m\vee S^{n-m}         \dto^{j}\\
C \rto^<<<<<<{c}              & X               \rto^<<<<<<<{r}              & S^m\times S^{n-m}.      
\enddiagram
\]
By the Hilton-Milnor Theorem, the map $j$ has a right homotopy inverse after looping. Also, since $q_{12}$ has a right homotopy inverse, the map $i_3$ is inert. Then by \cite[Theorem 5.4]{Hua24}, the map $i$ has a right homotopy inverse after looping, that is, the attaching map $\mathfrak{g}$ is inert. 
 \end{proof}

The inertness result in Lemma \ref{Zinert-lemma} is useful for computing loop space homology. For this, there is a general result in \cite[Proposition 10.1]{The24a} which is slightly improved in \cite[Section 4]{ST25}.
\begin{proposition}\label{ST-prop}
Let 
\[
\Sigma A \stackrel{f}{\larrow} Y\stackrel{}{\larrow} Z
\]
be a homotopy cofibration such that $f$ is inert and $Y$ is a co-$H$-space. 
Let $A\stackrel{\widetilde{f}}{\larrow} \Omega Y$ be the adjoint of $f$. Then for any commutative ring $R$ with unit such that $H_\ast(Y; R)$ is a free $R$-module, there is an algebra isomorphism
\[
H_\ast(\Omega Z; R)\cong T(\Sigma^{-1} \widetilde{H}_\ast(Y); R)/ (\text{Im}~(\widetilde{f}_\ast)),
\]
where $(\text{Im}~(\widetilde{f}_\ast))$ is the two sided ideal generated by $\text{Im}~(\widetilde{f}_\ast)$. Moreover, if $Y$ is the suspension of a co-$H$-space, then this is an isomorphism of Hopf algebras.  ~$\qqed$
\end{proposition}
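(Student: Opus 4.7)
The plan is to combine the loop space splitting provided by inertness with the Bott-Samelson/James description of loop space homology on a co-$H$-space, and then identify the resulting algebra map to $H_{*}(\Omega Z;R)$ as the expected tensor-algebra quotient.

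First I would apply Theorem \ref{BT2}: since $f$ is inert, $\Omega\varphi$ admits a right homotopy inverse, producing a homotopy equivalence $\Omega Y\simeq\Omega Z\times\Omega(\Sigma A\rtimes\Omega Z)$. In particular $(\Omega\varphi)_{*}$ is a split surjection of algebras. Since $Y$ is a simply connected co-$H$-space with $H_{*}(Y;R)$ free over $R$, the classical James theorem yields an algebra isomorphism $H_{*}(\Omega Y;R)\cong T(V)$ with $V=\Sigma^{-1}\widetilde{H}_{*}(Y;R)$. The composite $A\stackrel{\widetilde{f}}{\longrightarrow}\Omega Y\stackrel{\Omega\varphi}{\longrightarrow}\Omega Z$ is the loop-adjoint of the null-homotopic composite $\Sigma A\stackrel{f}{\longrightarrow}Y\stackrel{\varphi}{\longrightarrow}Z$ and is therefore itself null-homotopic. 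Thus $\operatorname{Im}(\widetilde{f}_{*})$ lies in $\ker((\Omega\varphi)_{*})$, and since this kernel is a two-sided ideal of $H_{*}(\Omega Y;R)$, so is the ideal $I=(\operatorname{Im}(\widetilde{f}_{*}))$. The resulting algebra homomorphism $\psi:T(V)/I\to H_{*}(\Omega Z;R)$ is surjective because $(\Omega\varphi)_{*}$ admits a section.

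For injectivity, I would compare Poincar\'{e} series on the two sides using the splitting above. Because $\Sigma A$ is itself a co-$H$-space, there is a homotopy equivalence $\Sigma A\rtimes\Omega Z\simeq\Sigma A\vee(\Sigma A\wedge\Omega Z)$, which is a suspension, so James applied a second time gives a tensor-algebra description of $H_{*}(\Omega(\Sigma A\rtimes\Omega Z);R)$. The Poincar\'{e} series of $H_{*}(\Omega Z;R)$ equals that of $H_{*}(\Omega Y;R)$ divided by that of $H_{*}(\Omega(\Sigma A\rtimes\Omega Z);R)$, and a formal-power-series manipulation matches this with the Poincar\'{e} series of $T(V)/I$, forcing $\psi$ to be an isomorphism. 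The main obstacle lies here: one must verify that the generators of $I$ behave as ``freely'' as possible inside $T(V)$, so that the algebraic quotient has exactly the rank predicted by the geometric splitting.

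For the Hopf algebra refinement, when $Y=\Sigma Y'$ for a co-$H$-space $Y'$, the James construction endows $H_{*}(\Omega Y;R)$ with a primitively-generated Hopf algebra structure, and the image of $\widetilde{f}_{*}$ consists of primitive elements; therefore $I$ is a Hopf ideal and $\psi$, being induced by the $H$-map $\Omega\varphi$, automatically respects coproducts.
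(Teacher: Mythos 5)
The paper does not prove this proposition; it cites \cite[Proposition 10.1]{The24a} and \cite[Section 4]{ST25} and reproduces the statement with a terminal $\qqed$ marking the citation, so the comparison is against those sources rather than anything in this paper.

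Your sketch gets the opening moves right --- the splitting from Theorem \ref{BT2}, the Bott--Samelson/co-$H$ identification $H_*(\Omega Y;R)\cong T(V)$, and the observation that $\mathrm{Im}(\widetilde{f}_*)$ lies in $\ker((\Omega\varphi)_*)$, so that $\psi\colon T(V)/I\to H_*(\Omega Z;R)$ is a well-defined surjection of algebras. But the injectivity step is a genuine gap, and the Poincar\'{e}-series argument as sketched cannot close it. From the geometric splitting one obtains
\[
\chi(H_*(\Omega Z;R)) \;=\; \frac{1}{1 - \chi(V) + \chi(\widetilde{H}_*(A;R))},
\]
and the surjection $\psi$ gives $\chi(T(V)/I)\geq\chi(H_*(\Omega Z;R))$ coefficient-wise. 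But the Anick/Golod--Shafarevich bound on a tensor-algebra quotient by a two-sided ideal generated by a module $U'$ is itself a \emph{lower} bound, $\chi(T(V)/I)\geq\frac{1}{1-\chi(V)+\chi(U')}$, with equality precisely when the generators are strongly free (algebraically inert). Both inequalities point the same direction, so together they cannot force $\psi$ to be an isomorphism. The missing ingredient is an \emph{upper} bound $\chi(T(V)/I)\leq\chi(H_*(\Omega Z;R))$, which is exactly the assertion that $\mathrm{Im}(\widetilde{f}_*)$ is strongly free in $T(V)$; but deducing this algebraic inertness from the geometric inertness of $f$ is the whole content of the proposition, so the argument as written is circular.

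The cited proofs sidestep the power-series comparison. Writing $F=\Sigma A\rtimes\Omega Z$ and letting $\iota\colon\Omega F\to\Omega Y$ denote the looped fibre inclusion, the multiplication $(a,z)\mapsto\iota(a)\cdot s(z)$ realizes the splitting $\Omega F\times\Omega Z\simeq\Omega Y$, and passing to homology identifies $J=\ker((\Omega\varphi)_*)$ with the $R$-span of $\iota_*(\widetilde{H}_*(\Omega F))\cdot s_*(H_*(\Omega Z))$. Since $I$ is a right ideal, $J\subseteq I$ reduces to showing $\iota_*(\widetilde{H}_*(\Omega F))\subseteq I$, which in turn reduces to showing that $\iota_*$ carries the algebra generators $\Sigma^{-1}\widetilde{H}_*(\Sigma A\rtimes\Omega Z)\cong\widetilde{H}_*(A)\otimes H_*(\Omega Z)$ of $H_*(\Omega F)$ into $I$. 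That last step requires the explicit description of the fibre inclusion $\Sigma A\rtimes\Omega Z\to Y$ from \cite{BT22}, and it is the part your proposal does not engage with. Once it is in place, $I=J$ and hence the algebra isomorphism follow without any ranking of power series, and the Hopf refinement follows immediately because $J$, as the kernel of a map induced by a loop map, is automatically a Hopf ideal.
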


We can now prove Theorem \ref{loopH-thm}.

\begin{proof}[Proof of Theorem \ref{loopH-thm}]
Since $k=\pm1$, there is the homotopy cofibration
\[
S^{n-1}\stackrel{\pm \mathfrak{g}}{\larrow} S^{m}\vee S^{n-m}\vee C\stackrel{}{\larrow} X
\]
By Lemma \ref{Zinert-lemma}, the attaching map $\pm \mathfrak{g}$ is inert. Since $C$ is a co-$H$-space with $H_\ast(C; R)$ a free $R$-module by assumption, so is $S^{m}\vee S^{n-m}\vee C$. Further, if $C$ is the suspension of a co-$H$-space, so is $S^{m}\vee S^{n-m}\vee C$. Then Proposition \ref{ST-prop} can apply and the theorem follows.
\end{proof}

We now turn to the non-inert case, beginning with the case of Moore spaces. Indeed, the cell attachment defining a Moore space produces infinitely many new homotopy groups, whose ranks grow exponentially.

 \begin{lemma}\label{cokerlemma}
Let $p$ be a prime, $n\geq 3$ and $r\geq 1$. 
Consider the graded cokernel of the homomorphism 
\[
\jmath_\ast: \pi_\ast(S^{n-1})\stackrel{}{\larrow} \pi_\ast(P^n(p^r))
\]
of homotopy groups induced by the bottom cell inclusion $\jmath$. Then in any of the following cases: 
\begin{itemize}
\item $p$ is odd and $s\leq r+1$;
\item $p=2$, $r\geq 2$ and $s\leq r+1$;
\item $p=2$, $r=1$ and $s=2$, $4$, or $8$,
\end{itemize}
the number of $\mathbb{Z}/p^s$-summands in the graded cokernel grows exponentially. 
 \end{lemma}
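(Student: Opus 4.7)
The plan is to exhibit, via the James construction and the Bott--Samelson theorem, an explicit family of classes in $\pi_\ast(P^n(p^r))$ of the required torsion order whose mod-$p$ Hurewicz images lie outside the image of $\jmath_\ast$, and to give an exponential lower bound on this family in each range of degrees.

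First, I would use the James equivalence $\Omega P^n(p^r) \simeq J(P^{n-1}(p^r))$ together with the Bott--Samelson computation
\[
H_\ast(\Omega P^n(p^r);\mathbb{F}_p) \cong T(u,v),
\]
where $u$ and $v$ are the mod-$p$ fundamental classes of the bottom and top cells of $P^{n-1}(p^r)$, in degrees $n-2$ and $n-1$ respectively. Under this model, the looped map $\Omega\jmath\colon \Omega S^{n-1}\to\Omega P^n(p^r)$ is induced by the bottom-cell inclusion $S^{n-2}\hookrightarrow P^{n-1}(p^r)$, and the induced map on $\mathbb{F}_p$-homology is the inclusion of the sub-Hopf-algebra $T(u)\hookrightarrow T(u,v)$. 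Consequently, any homotopy class $\alpha\in\pi_\ast(\Omega P^n(p^r))$ whose mod-$p$ Hurewicz image has a nonzero $v$-component cannot lie in the image of $(\Omega\jmath)_\ast$, and hence descends (via the loop--suspension adjunction) to a nontrivial class in the cokernel of $\jmath_\ast$ on $\pi_{\ast+1}(P^n(p^r))$.

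Second, I would produce such classes from iterated Samelson brackets in $\pi_\ast(\Omega P^n(p^r))$ built from the James inclusions $\tilde\mu \in \pi_{n-2}$ and $\tilde\nu \in \pi_{n-1}$ of the two cells of $P^{n-1}(p^r)$. Under the mod-$p$ Hurewicz map these Samelson brackets correspond to graded commutators in $T(u,v)$, and a Lyndon-word count inside the free Lie sub-algebra on $u,v$ shows that the brackets involving at least one $v$ grow exponentially in number with degree. Since $\tilde\nu$ has order $p^r$, every such bracket is $p$-power torsion; the Bockstein-product arguments of Cohen--Moore--Neisendorfer (for odd $p$, $s\leq r+1$) and of Boyde (for $p=2$ in the stated cases) then upgrade these classes to classes of order exactly $p^s$, yielding exponentially many $\mathbb{Z}/p^s$-summands in $\pi_\ast(\Omega P^n(p^r))$. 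By the Hurewicz argument above, each such summand survives to the cokernel of $\jmath_\ast$.

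The main obstacle will be the quantitative passage from the free Lie sub-algebra count, where exponential growth is manifest, to genuine $\mathbb{Z}/p^s$-summands in the integral homotopy cokernel. This is exactly the step already carried out in \cite{HW20} and \cite{Boy24} to establish Lemma \ref{BHWlemma}, so I would invoke their machinery directly, the only new input being the Hurewicz observation above that the constructed classes admit no preimage under $\jmath_\ast$. With this observation inserted, the same exponential lower bound used to prove hyperbolicity of $P^n(p^r)$ transfers unchanged to the cokernel of $\jmath_\ast$, giving the desired conclusion.
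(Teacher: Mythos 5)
Your Hurewicz observation is correct but not strong enough to carry the conclusion, and this is where the argument breaks down. You correctly note that $(\Omega\jmath)_\ast$ has image $T(u)\subset T(u,v)$ on $\mathbb{F}_p$-homology, so any $\alpha\in\pi_\ast(\Omega P^n(p^r))$ whose mod-$p$ Hurewicz image has a nonzero $v$-component is not in the image of $(\Omega\jmath)_\ast$. But the statement you need is that a $\mathbb{Z}/p^s$-summand of $B=\pi_\ast(\Omega P^n(p^r))$ generated by such an $\alpha$ descends to a $\mathbb{Z}/p^s$-summand of the quotient $B/A$, where $A=\operatorname{im}(\Omega\jmath)_\ast$. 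This requires knowing not merely $\alpha\notin A$ but that $p^{s-1}\alpha\notin A$, and indeed that $A$ does not project nontrivially onto the cyclic summand $\langle\alpha\rangle$. For $s\geq 2$ the element $p^{s-1}\alpha$ has zero mod-$p$ Hurewicz image, so your argument says nothing about whether $p^{s-1}\alpha\in A$; concretely, if $a\in A$ happens to equal $p^{s-1}\alpha$ plus something in the complement, the summand $\mathbb{Z}/p^s$ collapses to $\mathbb{Z}/p^{s-1}$ in $B/A$. The step ``each such summand survives to the cokernel'' is therefore unsupported. (Your aside that Samelson brackets on $\widetilde\nu$ are automatically of order $p^s$ also elides the substantial Bockstein analysis in CMN, but this is secondary to the structural gap above.)

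The paper avoids this by using a product decomposition rather than a Hurewicz argument. For each of the four cases it invokes a known splitting $\Omega P^n(p^r)\simeq F_1\times \Omega W$ --- with $F_1$ equal to $S^{2n'+1}\{p^r\}$, $T^{2n'+1}\{p^r\}$, $T^{n}\{2^r\}$, or the space $Z$ of \cite[Lemma~3.2]{HW20}, and $W$ a wedge of Moore spaces --- such that the adjoint $\widetilde\jmath\colon S^{n-2}\to\Omega P^n(p^r)$ factors through the $F_1$-factor. This forces $A\subset\pi_\ast(F_1)$, so $\pi_\ast(\Omega W)$ is literally a direct summand of the cokernel, and Lemma~\ref{BHWlemma} applies to $\Omega W$ verbatim. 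That is the crucial structural input your proposal lacks. Alternatively, as the paper remarks, one can combine Boyde's hyperbolicity result for $\pi_\ast(P^n(p^r))$ with the Burklund--Senger theorem that $\pi_\ast(S^{n-1})$ has only subexponentially many torsion summands, and run a counting argument showing that quotienting by $A$ can destroy at most subexponentially many summands; this is yet a third route, also distinct from yours. To repair your proof you would need either to import the product decompositions as the paper does, or to replace the mod-$p$ Hurewicz observation with a quantitative comparison against the subexponential size of $A$.
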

 \begin{proof}
To prove the lemma, it is equivalent to consider the induced homomorphism
\[
\widetilde{\jmath}_\ast: \pi_\ast(S^{n-2})\stackrel{}{\larrow} \pi_\ast(\Omega P^n(p^r)),
\]
where the bottom cell inclusion $\widetilde{\jmath}$ is the adjoint of $\jmath$. 

We need four loop space decompositions of Moore spaces. When $p$ is an odd prime and $n=2n'+2$ is even, by \cite[Theorem 1.1]{CMN79}
\[
\Omega P^{2n'+2} (p^r)\simeq S^{2n'+1}\{p^r\}\times \Omega \Big( \mathop{\bigvee}\limits_{j=0}^{\infty} P^{4n'+2jn'+3} (p^r)\Big),
\]
 where $S^{2n'+1}\{p^r\}$ is the homotopy fibre of the degree $p^r$ self-map of $S^{2n'+1}$; when $p$ is an odd prime and $n=2n'+1$ is odd, by \cite{CMN87}
 \[
 \Omega P^{2n'+1} (p^r)\simeq T^{2n'+1}\{p^r\}\times \Omega \Big( \mathop{\bigvee}\limits_{\alpha}^{} P^{n'_\alpha+1} (p^r)\Big)
 \]
for a space $T^{2n'+1}\{p^r\}$, where $\mathop{\bigvee}\limits_{\alpha}^{} P^{n'_\alpha+1} (p^r)$ is an infinite bouquet of mod-$p^r$ Moore spaces, and each $n'_\alpha \geq 4n'-1$; when $p=2$ and $r\geq 2$, by \cite[Lemma 2.6]{Coh89} 
\[
 \Omega P^{n} (2^r)\simeq T^{n}\{2^r\}\times \Omega \Big( \mathop{\bigvee}\limits_{\beta}^{} P^{n'_\beta} (2^r)\Big)
\]
for a space $T^{n}\{2^r\}$, where $\mathop{\bigvee}\limits_{\beta}^{} P^{n'_\beta} (2^r)$ is an infinite bouquet of mod-$2^r$ Moore spaces, and each $n'_\beta > n$; and when $p=2$ and $r=1$, by \cite[Lemma 3.2]{HW20}
\[
\Omega P^{n} (2)\simeq Z\times \mathop{\prod}\limits_{\text{odd}~p} \Omega \Sigma^{\frac{p^2-1}{2}(2n-3)} ( P^{n} (2)\vee  P^{n} (2))
\]
for some space $Z$. 
From these decompositions, it follows that the bottom-cell inclusion $S^{n-2} \stackrel{\widetilde{\jmath}}{\larrow} \Omega P^n(p^r)$ factors through the direct summand $S^{2n'+1}\{p^r\}$, $T^{2n'+1}\{p^r\}$, $T^{n}\{2^r\}$ or $Z$, corresponding to the four respective cases. Accordingly, the cokernel of $\widetilde{\jmath}_\ast$ contains the homotopy groups of Moore spaces $P^\ast(p^r)$ up to isomorphism, and then the lemma follows from Lemma \ref{BHWlemma}.
 \end{proof}
 
 \begin{remark} 
 Lemma \ref{cokerlemma} can be also proved directly by \cite[Theorem 1.3]{Boy24} and a result of Burklund-Senger \cite{Bur22} which shows that the number of the torsion summands in the homotopy groups of a sphere grows at most subexponentially.
 \end{remark}
 
\begin{corollary}\label{cokercoro}
Let $n\geq 3$, $k \geq 1$. 
Consider the homomorphism 
\[
\jmath_\ast: \pi_\ast(S^{n-1})\stackrel{}{\larrow} \pi_\ast(P^n(k))
\]
of homotopy groups induced by the bottom cell inclusion $\jmath$. The following hold: 
\begin{itemize}
 \item if $p^r~|~k$ for an odd prime $p$ and $r\geq 1$, then the numbers of $\mathbb{Z}/p^r$- and $\mathbb{Z}/p^{r+1}$-summands in the cokernel of $\jmath_\ast$ grow exponentially;
 \item if $2^r~|~k$ for $r\geq 2$, then the numbers of $\mathbb{Z}/2^r$- and $\mathbb{Z}/2^{r+1}$-summands in the cokernel of $\jmath_\ast$ grow exponentially; 
  \item if $2~|~k$ but $4~\nmid~k$, then the numbers of $\mathbb{Z}/2$-, $\mathbb{Z}/4$-, and $\mathbb{Z}/8$-summands in the cokernel of $\jmath_\ast$ grow exponentially.
\end{itemize}
\end{corollary}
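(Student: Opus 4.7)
The plan is to reduce each bullet to the prime-power case of Lemma \ref{cokerlemma} via $p$-localization. For the relevant prime $p$, I would write $k = p^s k'$ with $\gcd(p, k') = 1$; since $k'$ is a unit in $\mathbb{Z}_{(p)}$, the degree-$k'$ self-map of $S^{n-1}$ is a $p$-local homotopy equivalence, so the degree-$k$ map is $p$-locally homotopic to the degree-$p^s$ map. Passing to homotopy cofibres yields a $p$-local equivalence $P^n(k)_{(p)} \simeq P^n(p^s)_{(p)}$ that matches the bottom cell inclusions on each side.

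I would then observe that the number of $\mathbb{Z}/p^t$-summands of a finitely generated abelian group is invariant under $p$-localization, as this functor is exact, preserves $\mathbb{Z}/p^t$, and kills $\mathbb{Z}/q^u$ for primes $q \neq p$. Since $P^n(k)$ is a finite simply connected $CW$-complex, each homotopy group involved is finitely generated, so summand counts in $\mathrm{coker}(\jmath_\ast)$ are well-defined. Together with the natural isomorphism $\pi_\ast(X)_{(p)} \cong \pi_\ast(X_{(p)})$ for simply connected $X$, this identifies the $\mathbb{Z}/p^t$-summand count in $\mathrm{coker}(\jmath_\ast\colon \pi_\ast(S^{n-1}) \to \pi_\ast(P^n(k)))$ with the corresponding count for the bottom cell inclusion of $P^n(p^s)$.

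Finally, I would appeal to Lemma \ref{cokerlemma} case by case. For the first bullet, $p^r \mid k$ implies $s \geq r$, so the first case yields exponentially many $\mathbb{Z}/p^t$-summands for all $t \leq s+1$, in particular for $t \in \{r, r+1\}$. For the second bullet, $2^r \mid k$ with $r \geq 2$ forces $s \geq r \geq 2$, and the second case applies analogously to cover $t \in \{r, r+1\}$. For the third bullet, $2 \mid k$ but $4 \nmid k$ forces $s = 1$, and the third case directly supplies exponentially many $\mathbb{Z}/2$-, $\mathbb{Z}/4$-, and $\mathbb{Z}/8$-summands. I do not foresee any serious obstacle: the argument is essentially a $p$-local repackaging of Lemma \ref{cokerlemma}, with the only minor point requiring care being the invariance of summand counts under localization.
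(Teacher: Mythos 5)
Your argument is correct, but it takes a slightly different route from the paper's. The paper's one-line proof invokes the integral wedge splitting $P^n(k)\simeq\bigvee_{j}P^n(p_j^{t_j})$ and then appeals to Lemma~\ref{cokerlemma}; this requires tracing how the bottom-cell inclusion $\jmath$ of $P^n(k)$ interacts with the wedge decomposition (note that $\jmath$ is \emph{not} the composite $S^{n-1}\to P^n(p_j^{t_j})\hookrightarrow\bigvee_j P^n(p_j^{t_j})$, so one only gets a surjection of cokernels rather than an obvious retraction, and some additional care is needed to extract the summand count). Your approach instead $p$-localizes: writing $k=p^s k'$ with $k'$ prime to $p$, the degree-$k'$ self-map of $S^{n-1}$ becomes an equivalence, so the map of cofibre sequences over the commuting square $p^s\circ k' = k$ yields a $p$-local equivalence $P^n(k)_{(p)}\simeq P^n(p^s)_{(p)}$ that is the identity on the bottom cell $S^{n-1}$ and hence intertwines the two bottom-cell inclusions. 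Combined with exactness of localization, the isomorphism $\pi_*(X)_{(p)}\cong\pi_*(X_{(p)})$ for simply connected $X$, and the observation that $p$-localization of a finitely generated abelian group preserves the number of $\mathbb{Z}/p^t$-summands, this reduces the count cleanly to Lemma~\ref{cokerlemma} applied to $P^n(p^s)$. The case split at the end matches the lemma correctly ($s\geq r$ with $p$ odd; $s\geq r\geq 2$ for $p=2$; $s=1$ for $p=2$). Your route is arguably more transparent, since the wedge summands other than $P^n(p^s)$ simply vanish after localization, so the bottom-cell-inclusion compatibility is immediate rather than requiring a Hurewicz or homology argument.
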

\begin{proof}
The corollary follows Lemma \ref{cokerlemma} and the fact that $P^n(k)\simeq\mathop{\bigvee}\limits_{j=1}^\ell P^n(p_j^{t_j})$ if $k=\mathop{\prod}\limits_{j=1}^\ell p_j^{t_j}$.
\end{proof}
 
 We apply the special case of Moore spaces to study our capped complexes. 

 \begin{lemma}\label{Znoninert-lemma}
 Let $X$ be the capped complex in Theorem \ref{Zdecom-thm}. If $k\neq \pm 1$, then the attaching map for the top cell of $X$ is not inert. More precisely, the attaching map for the top cell of $X$ is not locally inert after localization at any prime $p$ that divides $k$. 
 
 Moreover, consider the homomorphism $i_\ast: \pi_\ast(\overline{X})\stackrel{}{\larrow}\pi_\ast(X)$ of homotopy groups induced by the lower skeleton inclusion. The following hold:
 \begin{itemize}
  \item if $k=0$, then the number of $\mathbb{Q}$-summands in the cokernel of $i_\ast$ grows exponentially;
 \item if $k=0$, then the number of $\mathbb{Z}/p^r$-summands in the cokernel of $i_\ast$ grows exponentially for all primes $p$ and all $r\geq 1$;
 \item if $k\neq 0$, then the numbers of $\mathbb{Z}/p^r$- and $\mathbb{Z}/p^{r+1}$-summands in the cokernel of $i_\ast$ grow exponentially for all odd primes $p$ with $p^r~|~k$ and $r\geq 1$;
 \item if $k\neq 0$ and $4~|~k$, then the numbers of $\mathbb{Z}/2^r$- and $\mathbb{Z}/2^{r+1}$-summands in the cokernel of $i_\ast$ grow exponentially for all $2^r~|~k$ with $r\geq 1$;
 \item if $k\neq 0$ and $2~|~k$ but $4~\nmid k$, then the numbers of $\mathbb{Z}/2$-, $\mathbb{Z}/4$-, and $\mathbb{Z}/8$-summands in the cokernel of $i_\ast$ grow exponentially.
 \end{itemize}
 \end{lemma}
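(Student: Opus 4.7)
The plan is to apply Theorem~\ref{Zdecom-thm} to extract a Moore-space retract of $\Omega X$, verify that this retract sits entirely inside the cokernel of $i_*$, and then invoke Lemma~\ref{BHWlemma} (together with Boyde's hyperbolicity of sphere wedges in the case $k=0$) to count torsion summands. By Theorem~\ref{Zdecom-thm} combined with Lemma~\ref{semi-lemma}, $\Omega V$ is a retract of $\Omega X$ via a loop map $\tau \colon \Omega X \to \Omega V$, where $V := P^n(k) \rtimes \Omega(S^m \times S^{n-m})$. Combining the suspension splitting of $\Sigma(\Omega S^m \times \Omega S^{n-m})$ with the James splitting, $V$ is a finite-type wedge of Moore spaces $\bigvee_\alpha P^{N_\alpha}(k)$ when $k \neq 0$, and a wedge of at least two spheres of differing dimensions when $k = 0$.

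The heart of the proof is to show that $\tau \circ \Omega i \colon \Omega\overline{X} \to \Omega V$ is null on $\pi_*$, so that $\pi_*(V)$ embeds as a direct summand of $\mathrm{coker}(i_*)$. Decompose $\tau = \Omega(\mathrm{pr}) \circ \sigma$, where $\sigma \colon \Omega X \to \Omega\big((P^n(k) \vee C)\rtimes \Omega(S^m\times S^{n-m})\big)$ is the retraction complementary to the loop-section $\phi' = \Omega s \circ \phi$ from the proof of Lemma~\ref{rlemma2}, and $\mathrm{pr}$ is the wedge-projection to $V$ arising from Lemma~\ref{semi-lemma}. On the wedge summands of $\overline{X} = S^m\vee S^{n-m}\vee C$: the maps $\Omega s_1$ and $\Omega s_2$ factor through $\phi'$ (since $\phi\circ \Omega j_i = \Omega i_i$ by construction of the Hilton-Milnor section) and are therefore killed by $\sigma$; the map $\Omega c$ lifts through the homotopy fiber summand $\Omega(C\rtimes \Omega(S^m\times S^{n-m}))$ because $r\circ c$ is null by Diagram~\eqref{CXQdiag}, and is therefore killed by $\Omega(\mathrm{pr})$. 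As $\tau$ is a loop map, Samelson products of these basic generators also vanish under $\tau$, and a Hilton-Milnor-type expansion propagates the vanishing from the basic generators to all of $\pi_*(\Omega\overline{X})$.

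With $\pi_*(V)$ thereby identified as a direct summand of $\mathrm{coker}(i_*)$, the torsion counts follow immediately. For $k\neq 0$, writing $P^n(k)\simeq \bigvee_j P^n(p_j^{t_j})$ in $p$-primary pieces and applying Lemma~\ref{BHWlemma} to any single Moore-space summand in each $p$-localization $V_{(p)}$ yields exponential growth of $\mathbb{Z}/p^s$-summands for each listed pair $(p,s)$. For $k=0$, Boyde's hyperbolicity of sphere wedges \cite{Boy22} gives the $\mathbb{Z}/p^r$-claims, while classical rational hyperbolicity of a wedge of at least two spheres of differing dimensions gives the $\mathbb{Q}$-claim. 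A non-trivial cokernel then forbids a right homotopy inverse to $\Omega i$, proving non-inertness for $k \neq \pm 1$; localizing at $p\mid k$ and repeating the argument gives local non-inertness at $p$. The main obstacle is the Hilton-Milnor propagation in the previous paragraph: vanishing of $\tau$ on the wedge-summand generators extends to all of $\Omega\overline{X}$ easily if $C$ is a suspension, but for general simply connected $C$ it requires a more delicate argument, e.g., via Hilton's basic-product description of $\pi_*(\overline{X})$ together with the fact that loop maps preserve Samelson products.
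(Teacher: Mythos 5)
Your key claim --- that $\tau\circ\Omega i$ is null on $\pi_*$, so that $\pi_*(V)$ with $V=P^n(k)\rtimes\Omega(S^m\times S^{n-m})$ embeds as a direct summand of $\operatorname{coker}(i_*)$ --- is false, and this is not merely a matter of the ``delicate argument'' you flag at the end. The bottom cell $S^{n-1}\subset P^n(k)$ is \emph{already present} in $\overline{X}$ via the attaching map: by the construction of $\lambda\colon P^n(k)\to X$ in Section~\ref{subsec: cof} one has $\lambda\circ\jmath\simeq i\circ\mathfrak{g}$, so the composite $S^{n-1}\xrightarrow{\jmath}P^n(k)\hookrightarrow V\to X$ factors through $\Omega i$. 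Consequently $\tau_*(i_*([\mathfrak{g}]))$ is the image of the bottom-cell generator $\jmath_*\in\pi_{n-1}(P^n(k))\cong\mathbb{Z}/k$ under $P^n(k)\hookrightarrow V$, which is nonzero for $k\neq\pm1$. One can also see the clash directly: the cellular long exact sequence and Blakers--Massey give $\operatorname{coker}(i_*)=0$ in degree $n-1$, whereas $\pi_{n-1}(V)\supseteq\mathbb{Z}/k\neq 0$. So the intended inclusion $\pi_*(V)\hookrightarrow\operatorname{coker}(i_*)$ fails already in the lowest relevant degree.

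The paper avoids this by never claiming that all of $\pi_*(P^n(k))$ lands in the cokernel. It compares $X$ with the $k=0$ model $\overline{X}\vee S^n$ via the naturality of the decomposition (Remark~\ref{natremark}), identifies $P^n(0)\simeq S^{n-1}\vee S^n\to P^n(k)$ as $\jmath\circ q_1$, and applies \cite[Proposition 4.6]{Hua24} to the half-smash to reduce the count to $\operatorname{coker}\bigl(\jmath_*\colon\pi_*(S^{n-1})\to\pi_*(P^n(k))\bigr)$, which is exactly what Corollary~\ref{cokercoro} and Lemma~\ref{cokerlemma} control. In other words, the quantity you need is the \emph{relative} growth of torsion in $\pi_*(P^n(k))$ over $\pi_*(S^{n-1})$, not the absolute growth, and the two are genuinely different. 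To salvage your plan you would have to replace ``$\pi_*(V)$ embeds in $\operatorname{coker}(i_*)$'' by a statement about the cokernel of $(\jmath\rtimes 1)_*$, and then you are essentially reproducing the paper's argument; in particular you would still need the naturality of Theorem~\ref{BT2} (or an equivalent compatibility of the two splittings for $X$ and $\overline{X}\vee S^n$) to make the comparison rigorous, which is the piece missing from your write-up.
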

\begin{proof}
When $k=0$, the homotopy cofibration for $X$ splits and is homotopy equivalent to the homotopy cofibration
\[
S^{n-1}\stackrel{0\mathfrak{g}}{\larrow} S^{m}\vee S^{n-m}\vee C\stackrel{i_{123}}{\larrow} S^{m}\vee S^{n-m}\vee C \vee S^{n},
\]
where $i_{123}$ is the canonical inclusion of the first three wedge summands. 
So the attaching map for the top cell of $X$ is not inert. Moreover, up to isomorphism the cokernel of $i_\ast=i_{123\ast}$ contains the homotopy groups $\pi_{\geq n} (S^n\vee S^m)$. Since $S^n\vee S^m$ is $\mathbb{Z}/p^r$-hyperbolic for all primes $p$ and all $r\geq 1$ by \cite[Theorem 1]{Boy22}, the number of $\mathbb{Z}/p^r$-summands in the cokernel of $i_\ast$ grows exponentially for all primes $p$ and all $r\geq 1$. Also, since $S^n\vee S^m$ is rationally hyperbolic, the number of $\mathbb{Q}$-summands in the cokernel of $i_\ast$ grows exponentially. 

When $k\neq \pm 1, 0$, consider the homotopy cofibration diagram
\[
\diagram
S^{n-1} \dto^{0} \rto^<<<{\ast}   &  S^{m}\vee S^{n-m}\vee C  \ddouble \rto^<<<<{i_{123}}  &  S^{m}\vee S^{n-m}\vee C \vee S^n \dto^{i\circ q_{123}} \\
S^{n-1}            \rto^<<<{k\mathfrak{g}}   &  S^{m}\vee S^{n-m}\vee C   \rto^<<<<<<<<{i}  & X,
\enddiagram
\]
where $q_{123}$ is the pinch map onto the first three wedge summands. Then Theorem \ref{Zdecom-thm} plus the naturality property in Remark \ref{natremark} can apply to deduce the compatible homotopy equivalences
  \[
   \diagram
   \Omega (S^m\times S^{n-m})\times\Omega \big((S^{n-1}\vee S^n \vee C)\rtimes \Omega (S^m\times S^{n-m})\big) \rto^<<<<{\simeq}  \dto^{1\times \Omega ((\jmath\circ q_1)\vee 1)\rtimes 1)} & \Omega (S^{m}\vee S^{n-m}\vee C \vee S^n)\dto^{\Omega (i\circ q_{123}) }\\
      \Omega (S^m\times S^{n-m})\times\Omega \big((P^{n}(k)\vee C)\rtimes \Omega (S^m\times S^{n-m})\big) \rto^<<<<<<<<<{\simeq}   &  \Omega X,
   \enddiagram
 \] 
in which we identify the map $P^n(0)\stackrel{\jmath_0}{\larrow} P^{n}(k)$ with $S^{n-1}\vee S^n \stackrel{ q_1}{\larrow} S^{n-1} \stackrel{\jmath}{\larrow}P^{n}(k)$, where $q_1$ is the pinch map and $\jmath$ is the bottom cell inclusion. 

By the compatible homotopy equivalences and Lemma \ref{semi-lemma}, up to isomorphism the cokernel of $(i\circ q_{123})_\ast$ contains the cokernel of 
\[
((\jmath\circ q_1)\rtimes 1)_\ast: \pi_{\ast}((S^{n-1}\vee S^n)\rtimes \Omega (S^m\times S^{n-m})) \stackrel{}{\larrow} \pi_{\ast}(P^{n}(k))\rtimes \Omega (S^m\times S^{n-m})),
\]
and then by \cite[Proposition 4.6]{Hua24} contains the cokernel of 
\[
(\jmath\circ q_1)_\ast: \pi_{\ast}(S^{n-1}\vee S^n) \stackrel{q_{1\ast}}{\larrow} \pi_\ast(S^{n-1}) \stackrel{\jmath_\ast}{\larrow}
\pi_{\ast}(P^{n}(k)).
\]
Note that the latter agrees with the cokernel of $\jmath_\ast$ as $q_{1\ast}$ is surjective. 
By Corollary \ref{cokercoro}, the numbers of $\mathbb{Z}/p^r$- and $\mathbb{Z}/p^{r+1}$-summands in the cokernel of $(\jmath\circ q_1)_\ast$ grow exponentially for all odd primes $p$ with $p^r~|~k$; the numbers of $\mathbb{Z}/2^r$- and $\mathbb{Z}/2^{r+1}$-summands in the cokernel of $(\jmath\circ q_1)_\ast$ grow exponentially for all $2^r~|~k$ if $4~|~k$; and the numbers of $\mathbb{Z}/2$-, $\mathbb{Z}/4$- and $\mathbb{Z}/8$-summands in the cokernel of $(\jmath\circ q_1)_\ast$ grow exponentially if $2~|~k$ but $4~\nmid~k$. Then the same conclusion holds for the cokernel of $(i\circ q_{123})_\ast$, and hence holds for the cokernel of $i_\ast$ as $q_{123\ast}$ is surjective. In particular, the attaching map $k\mathfrak{g}$ for the top cell of $X$ is not locally inert after localization at any prime $p$ that divides $k$. 
\end{proof}

\begin{proof}[Proof of Theorem \ref{Zinert-thm}]
The theorem follows immediately from Lemmas \ref{Zinert-lemma} and \ref{Znoninert-lemma}.
\end{proof}

\subsection{A refinement, I}
\label{subsec: refine1}
Let $\mathcal{M}$ denote the following class of spaces. 
A space $Y$ belongs to $\mathcal{M}$ if and only if $Y$ is homotopy equivalent to a finite-type wedge of simply connected spheres and Moore spaces.

For a finite abelian group $T\cong \mathop{\bigoplus}\limits_{j=1}^\ell \mathbb{Z}/p_j^{t_j}$, denote by $P^{n}(T)=\mathop{\bigvee}\limits_{j=1}^\ell P^n(p_j^{t_j})$.
 \begin{proposition}\label{ref-prop}
Let $X$ be the capped complex in Theorem \ref{Zdecom-thm}. Write 
\[
H_i(X;\mathbb{Z})\cong \mathbb{Z}^{\oplus d_i}\oplus T_i,
\]
where $d_i\geq 0$ and $T_i$ is a finite abelian group. Suppose that $\overline{X}\in \mathcal{M}$. Then there is a homotopy fibration
\[
(P^{n}(k)\vee C)\rtimes \Omega (S^m\times S^{n-m})\stackrel{}{\larrow} X\stackrel{}{\larrow} S^m\times S^{n-m},
\]
where 
\[
C\simeq \Big(\mathop{\bigvee}\limits_{d_{m}-1} S^{m}\Big)\vee \Big(\mathop{\bigvee}\limits_{d_{n-m}-1} S^{n-m}\Big) \vee \Big(\mathop{\bigvee}\limits_{\substack{i\neq m, n-m \\ 2 \leq i \leq n-1}} \mathop{\bigvee}\limits_{d_i} S^{i} \Big) \vee \Big(\mathop{\bigvee}\limits_{3\leq i\leq n-1}  P^i(T_i) \Big),
\]
and this homotopy fibration splits after looping to give a homotopy equivalence 
   \[
   \Omega X\simeq\Omega (S^m\times S^{n-m})\times\Omega \big((P^{n}(k)\vee C)\rtimes \Omega (S^m\times S^{n-m})\big). 
 \] 
 \end{proposition}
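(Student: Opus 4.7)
The plan is to deduce the specific form of $C$ by classifying $\overline{X}$ via its homology and then invoking Theorem~\ref{Zdecom-thm} directly.

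First, I would compute the homology of $\overline{X}$. Since $X$ is the capped complex in Theorem~\ref{Zdecom-thm}, the homotopy cofibration $S^{n-1}\stackrel{k\mathfrak{g}}{\larrow}\overline{X}\larrow X$ has an attaching map inducing the trivial homomorphism on homology. The associated long exact sequence therefore collapses to give $H_i(\overline{X};\mathbb{Z})\cong H_i(X;\mathbb{Z})\cong \mathbb{Z}^{\oplus d_i}\oplus T_i$ for all $2\leq i\leq n-1$.

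Next, I would identify the homotopy type of $\overline{X}$ from this homology. The hypothesis $\overline{X}\in\mathcal{M}$ says $\overline{X}$ is homotopy equivalent to a finite-type wedge of simply connected spheres and Moore spaces, and such a wedge is determined up to homotopy by its graded homology. More precisely, applying the structure theorem for finitely generated abelian groups to each torsion group $T_i$ and assigning one Moore space summand to each cyclic factor realizes the homology above by a unique wedge whose spherical summands are $d_i$ copies of $S^i$ in each degree $i$ and whose Moore space summands realize the torsion in each $T_i$.

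Since Theorem~\ref{Zdecom-thm} already furnishes a decomposition $\overline{X}\simeq S^m\vee S^{n-m}\vee C'$ with the prescribed attaching map, and since $C'$ is a retract of the wedge $\overline{X}$ of spheres and Moore spaces, $C'$ itself lies in $\mathcal{M}$. Its graded homology is obtained by subtracting that of $S^m\vee S^{n-m}$ from that of $\overline{X}$: in degree $m$ it is $\mathbb{Z}^{\oplus (d_m-1)}$, in degree $n-m$ it is $\mathbb{Z}^{\oplus (d_{n-m}-1)}$, and in every other degree it agrees with $H_i(X;\mathbb{Z})$. The rigidity of wedges of simply connected spheres and Moore spaces then identifies $C'$ with the space $C$ described in the proposition, and the claimed homotopy fibration and its looped splitting follow at once by applying Theorem~\ref{Zdecom-thm}.

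The main obstacle is the compatibility assertion implicit above: the wedge-of-spheres-and-Moore-spaces decomposition of $\overline{X}$ must be chosen so that the specific $S^m\vee S^{n-m}$ summand required by Theorem~\ref{Zdecom-thm} appears as two of its wedge factors. This will be handled by the rigidity of such wedges, namely that any inclusion of $S^m$ into $\overline{X}$ inducing a generator on $H_m$ is, up to a self-homotopy-equivalence of $\overline{X}$, a coordinate inclusion of one of the spherical wedge summands, and similarly for $S^{n-m}$; this legitimates treating $C'$ as the complement of the two distinguished spherical factors in the canonical wedge decomposition.
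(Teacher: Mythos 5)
Your argument follows the same route as the paper: reduce the problem to identifying the homotopy type of $C$ by showing $C\in\mathcal{M}$ and then reading off the wedge decomposition from the homology of $X$. However, the pivotal step is asserted rather than justified. You write that since $C$ (your $C'$) is a retract of $\overline{X}\in\mathcal{M}$, it "itself lies in $\mathcal{M}$." That closure of $\mathcal{M}$ under retracts is a substantive theorem, not a formal consequence: a retract of a wedge of spheres and Moore spaces need not obviously be such a wedge, and establishing this requires an atomicity or Krull--Schmidt type argument. The paper handles exactly this point by citing \cite[Theorem~3.5]{Sta25}. Without that input (or an equivalent argument), your proof has a genuine hole at its crucial juncture. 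Once $C\in\mathcal{M}$ is secured, the rest---computing $H_\ast(\overline{X})\cong H_\ast(X)$ in the relevant degrees from the trivial attaching map, subtracting the homology of $S^m\vee S^{n-m}$, and using that objects of $\mathcal{M}$ are determined by their graded homology---is correct and matches the paper.

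Your final paragraph raises a compatibility "obstacle" that does not actually arise. The homotopy cofibration $S^{n-1}\stackrel{k\mathfrak{g}}{\larrow} S^m\vee S^{n-m}\vee C\larrow X$ with its particular splitting of the $(n-1)$-skeleton is part of the hypothesis of Theorem~\ref{Zdecom-thm}; $C$ is a specific, already-given space, not something you must locate inside an independently chosen wedge decomposition of $\overline{X}$. There is therefore no need to show that the $S^m$ and $S^{n-m}$ summands appear as coordinate factors of a canonical decomposition of $\overline{X}$, nor to invoke any "rigidity" statement about inclusions of spheres. That entire discussion can be dropped; the only thing to verify is the homotopy type of the given $C$, and for that the closure-under-retracts step is the real content.
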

 \begin{proof}
 By assumption $\overline{X}\simeq S^m\vee S^{n-m}\vee C\in \mathcal{M}$. By \cite[Theorem 3.5]{Sta25}, $\mathcal{M}$ is closed under retracts. It follows that $C\in \mathcal{M}$, and then the homotopy equivalence for $C$ follows from the homology of $X$. The other parts of the proposition follows from Theorem \ref{Zdecom-thm} immediately.
 \end{proof}


\section{Poincar\'{e} Duality complexes and their variants}
\label{sec: PD}

Let $M$ be an $n$-dimensional simply connected Poincar\'{e} Duality complex. Then there is a homotopy cofibration
\[
S^{n-1}\stackrel{\mathfrak{g}}{\larrow} \overline{M}\stackrel{}{\larrow} M,
\]
where $\overline{M}$ is the $(n-1)$-skeleton of $M$, and $\mathfrak{g}$ is the attaching map for the top cell of $M$. It is clear that $M$ is a capped complex. 
Furthermore, for any $k\in \mathbb{Z}$, we can define a space $X_{M,k}$ by the homotopy cofibration
\[
S^{n-1}\stackrel{k\mathfrak{g}}{\larrow} \overline{M}\stackrel{}{\larrow} X_{M,k}.
\]
It is clear that $X_{M,k}$ is also a capped complex, and $X_{M,1}\simeq M$.

In this section, we apply our results to the capped complexes $X_{M,k}$ and extend the applications developed in Section \ref{sec: ap} to this setting. In particular, we establish Theorem \ref{QZinert-thm} on the gaps between rational and local or integral inertness and Theorem \ref{Mp-thm} for highly connected $X_{M,k}$.

\subsection{Two decompositions}
\label{subsec: 2decomM}
Theorems \ref{Zdecom-thm} and \ref{pdecom-thm} can be applied to $X_{M,k}$ directly. 

\begin{proposition}\label{MZprop}
Let $M$ be an $n$-dimensional simply connected Poincar\'{e} Duality complex with a homotopy cofibration
\[
S^{n-1}\stackrel{\mathfrak{g}}{\larrow} S^{m}\vee S^{n-m}\vee C\stackrel{}{\larrow} M
\]
for a space $C$ and $2\leq m\leq n-2$. Suppose that the component of $\mathfrak{g}$ on $S^m\vee S^{n-m}$ is the Whitehead product of the canonical inclusions $S^m\stackrel{}{\larrow} S^m\vee S^{n-m}$ and $S^{n-m}\stackrel{}{\larrow} S^m\vee S^{n-m}$. Then for any $k\in \mathbb{Z}$, there is a homotopy fibration
\[
(P^{n}(k)\vee C)\rtimes \Omega (S^m\times S^{n-m})\stackrel{}{\larrow} X_{M,k}\stackrel{}{\larrow} S^m\times S^{n-m},
\]
which splits after looping to give a homotopy equivalence 
   \[
   \Omega X_{M,k}\simeq\Omega (S^m\times S^{n-m})\times\Omega \big((P^{n}(k)\vee C)\rtimes \Omega (S^m\times S^{n-m})\big). 
 \] 
\end{proposition}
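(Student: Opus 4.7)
The plan is to deduce Proposition \ref{MZprop} as a direct application of Theorem \ref{Zdecom-thm}. By the definition preceding the proposition, $X_{M,k}$ is constructed via the homotopy cofibration
\[
S^{n-1}\stackrel{k\mathfrak{g}}{\larrow} \overline{M}\stackrel{}{\larrow} X_{M,k},
\]
and by the hypothesis of the proposition, $\overline{M}\simeq S^m\vee S^{n-m}\vee C$ with the component of $\mathfrak{g}$ on the wedge summand $S^m\vee S^{n-m}$ equal to the Whitehead product $[i_1,i_2]$. Consequently, the attaching map of the top cell of $X_{M,k}$ is literally of the form $k\mathfrak{g}$ demanded by Theorem \ref{Zdecom-thm}.

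The only point requiring a brief verification is that $X_{M,k}$ is itself a capped complex, so that Theorem \ref{Zdecom-thm} is genuinely available. Since $M$ is a simply connected Poincar\'{e} duality complex, it is in particular a capped complex, and the attaching map $\mathfrak{g}$ induces the zero homomorphism in reduced homology (this is forced by $H_{n}(M;\mathbb{Z})\cong\mathbb{Z}$ via the homology long exact sequence of the defining cofibration of $M$). Hence $k\mathfrak{g}$ also induces the zero homomorphism in homology, and the long exact sequence of the defining cofibration of $X_{M,k}$ yields $H_n(X_{M,k};\mathbb{Z})\cong\mathbb{Z}$. Thus $X_{M,k}$ is a simply connected $n$-dimensional capped complex with $(n-1)$-skeleton $\overline{M}\simeq S^m\vee S^{n-m}\vee C$.

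With these observations in place, Theorem \ref{Zdecom-thm} applies verbatim to $X_{M,k}$ and produces the homotopy fibration
\[
(P^{n}(k)\vee C)\rtimes \Omega (S^m\times S^{n-m})\stackrel{}{\larrow} X_{M,k}\stackrel{}{\larrow} S^m\times S^{n-m}
\]
together with the asserted looped splitting. There is no genuine obstacle to overcome here; the proposition is essentially a restatement of Theorem \ref{Zdecom-thm} in the special case where the $(n-1)$-skeleton of $X$ arises as the skeleton of a Poincar\'{e} duality complex, and the entire content of the proof is the bookkeeping above.
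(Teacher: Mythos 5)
Your proposal is correct and takes essentially the same approach as the paper: recognize that $X_{M,k}$ is by construction a capped complex whose defining cofibration $S^{n-1}\stackrel{k\mathfrak{g}}{\larrow} S^m\vee S^{n-m}\vee C\larrow X_{M,k}$ has exactly the shape required by Theorem~\ref{Zdecom-thm}, and apply that theorem directly. The only difference is that you spell out the (easy) verification that $X_{M,k}$ is a capped complex, which the paper simply states as clear at the start of Section~\ref{sec: PD}.
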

\begin{proof}
By assumption, there is a homotopy cofibration
\[
S^{n-1}\stackrel{k\mathfrak{g}}{\larrow}  S^{m}\vee S^{n-m}\vee C\stackrel{}{\larrow} X_{M,k}.
\]
The proposition follows by applying Theorem \ref{Zdecom-thm} to the homotopy cofibration.
\end{proof}

\begin{proposition}\label{Mpprop} 
Let $M$ be an $n$-dimensional simply connected Poincar\'{e} Duality complex such that its $(n-1)$-skeleton is a co-$H$-space. 
Suppose that $(a, b)\in H^{m}(M;\mathbb{Z})\times  H^{n-m}(M;\mathbb{Z})$ with $2\leq m\leq n-2$ is a spherical pair of $M$ such that $\langle a\cup b, [M]\rangle=\pm 1$. Then after localization away from any prime $p$ satisfying $2p<{\rm max}(m, n-m)+4$, for any $k\in \mathbb{Z}\backslash \{0\}$ the following hold:
\begin{itemize}
\item there is a homotopy cofibration
\[
S^{n-1} \larrow S^m\vee S^{n-m}\vee C\stackrel{}{\larrow} X_{M,k}
\]
for a simply connected space $C$ of dimension less than $n-1$;
\item
there is a homotopy fibration
\[
(P^{n}(k)\vee C)\rtimes \Omega (S^m\times S^{n-m})\stackrel{}{\larrow} X_{M,k}\stackrel{}{\larrow} S^m\times S^{n-m};
\]
\item the homotopy fibration splits after looping to give a homotopy equivalence 
   \[
   \Omega X_{M,k}\simeq\Omega (S^m\times S^{n-m})\times\Omega \big((P^{n}(k)\vee C)\rtimes \Omega (S^m\times S^{n-m})\big). 
 \] 
 \end{itemize}
\end{proposition}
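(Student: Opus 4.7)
The plan is to apply Theorem \ref{pdecom-thm} directly to $X_{M,k}$, using the spherical pair $(a,b)$ inherited from $M$ and the divisibility coefficient $k$. Three of the four hypotheses are essentially immediate: $X_{M,k}$ is a simply connected capped complex since $M$ is, so $\mathfrak{g}$ and hence $k\mathfrak{g}$ act trivially on $H_\ast$; the $(n-1)$-skeleton of $X_{M,k}$ equals $\overline{M}$, which is a co-$H$-space by hypothesis; and the top-cell attaching map $k\mathfrak{g}$ is tautologically divisible by $k$.

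The delicate hypothesis is that $(a,b)$ remains a spherical pair for $X_{M,k}$ with nonzero pairing against the fundamental class. Since $2\leq m\leq n-2$, the skeletal inclusions give isomorphisms $H^m(M)\cong H^m(\overline{M})\cong H^m(X_{M,k})$ and similarly in degree $n-m$, transferring $a,b$ to classes $a',b'$ on $X_{M,k}$. By cellular approximation, the sphere maps $s_1,s_2$ factor through $\overline{M}$ up to homotopy, and composing with $\overline{M}\hookrightarrow X_{M,k}$ yields sphere maps under which $a',b'$ restrict to generators. The identities $a'^2=0$, $b'^2=0$, and $\langle a'\cup b',[X_{M,k}]\rangle=\pm k$ can then be read off from the Hilton--Milnor decomposition of the attaching map: the coefficients of the basic Whitehead products $[i_1,i_1]$, $[i_2,i_2]$, and $[i_1,i_2]$ in $k\mathfrak{g}$ are exactly $k$ times the corresponding coefficients in $\mathfrak{g}$, which by the standard Whitehead-product/cup-product dictionary evaluate respectively to $k\langle a\cup a,[M]\rangle=0$, $k\langle b\cup b,[M]\rangle=0$, and $k\langle a\cup b,[M]\rangle=\pm k$. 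In particular $\langle a'\cup b',[X_{M,k}]\rangle/k=\pm 1$ is a unit, so no primes beyond those with $2p<\max(m,n-m)+4$ need to be inverted. Theorem \ref{pdecom-thm} now produces the asserted cofibration, fibration, and loop-space product decomposition.

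The only refinement over the conclusion of Theorem \ref{pdecom-thm} is the dimension bound $\dim C<n-1$, rather than $\dim C<n$. This comes from Poincar\'e duality on the simply connected PD complex $M$: one has $H_{n-1}(M;\mathbb{Z})\cong H^1(M;\mathbb{Z})=0$ and $H_{n-2}(M;\mathbb{Z})\cong H^2(M;\mathbb{Z})$, which is torsion-free by the universal coefficient theorem together with $H_1(M)=0$. Via the cofibration $S^{n-1}\to\overline{M}\to M$ these properties descend to $\overline{M}$, and since $m,n-m\neq n-1$ they further pass to the retract $C$. A simply connected CW-complex whose top two nontrivial homology groups satisfy $H_{n-1}=0$ and $H_{n-2}$ torsion-free admits a minimal CW-model with no cells in dimension $n-1$, hence can be taken to have dimension at most $n-2$.

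The main obstacle is the cup-product verification in the degenerate degrees $2m=n$ or $2(n-m)=n$, where $a'^2$ or $b'^2$ lives in the top cohomology of $X_{M,k}$ and might a priori differ from its value in $M$. The Hilton--Milnor computation above resolves this uniformly: scaling the attaching map by $k$ scales every Whitehead-product coefficient by $k$, so the top-dimensional cup products in $X_{M,k}$ are precisely $k$ times those in $M$ and no new cup products are created.
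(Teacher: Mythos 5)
Your overall plan matches the paper's: verify the hypotheses of Theorem~\ref{pdecom-thm} for $X_{M,k}$ and apply it. Three of your checks are fine, and two of them are actually cleaner than the paper's: factoring $s_1,s_2$ through $\overline{M}=\overline{X_{M,k}}$ by cellular approximation is a neat shortcut that avoids the paper's Blakers--Massey argument, and your explicit derivation of the bound $\dim C < n-1$ from $H_{n-1}(M)=0$ and $H_{n-2}(M)\cong H^2(M)$ torsion-free is a useful supplement that the paper leaves implicit.

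The murky step is the verification that $(a',b')$ is a spherical pair of $X_{M,k}$ with $\langle a'\cup b',[X_{M,k}]\rangle = \pm k$. You invoke ``the Hilton--Milnor decomposition of the attaching map'' and coefficients of the basic Whitehead products $[i_1,i_1]$, $[i_2,i_2]$, $[i_1,i_2]$ in $k\mathfrak{g}$. But at that point $\mathfrak{g}$ is a map $S^{n-1}\to\overline{M}$, and $\overline{M}$ is only assumed to be a co-$H$-space, not a wedge of spheres; the wedge splitting $\overline{M}\simeq S^m\vee S^{n-m}\vee C$ is an output of Lemma~\ref{plemma} inside Theorem~\ref{pdecom-thm} and is only available after localization, and even then $C$ need not be a wedge of spheres. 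Moreover, the spherical pair condition in the hypothesis is an integral condition, so a purely local argument doesn't close the loop. The conclusion you reach is correct, but the reasoning should instead use the degree-$k$ map $\varphi_k\colon X_{M,k}\to M$ which extends the identity on $(n-1)$-skeletons (it exists from the homotopy cofibration diagram comparing the cofibrations defining $X_{M,k}$ and $M$, as in the paper's proof). Then $a'=\varphi_k^\ast(a)$, $b'=\varphi_k^\ast(b)$ agree with your skeletal transfers, $a'^2=\varphi_k^\ast(a^2)=0$ and $b'^2=0$ are immediate by naturality, and $\langle a'\cup b',[X_{M,k}]\rangle = \langle a\cup b,\varphi_{k\ast}[X_{M,k}]\rangle = k\langle a\cup b,[M]\rangle = \pm k$, all integrally and with no reference to Whitehead products. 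With that fix the proof goes through.
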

\begin{proof}
By construction, there is a homotopy cofibration diagram
\[
\diagram
S^{n-1} \dto^{k} \rto^{k\mathfrak{g}}  & \overline{M} \ddouble \rto^{}  &  X_{M,k} \dto^{\varphi_k} \\
S^{n-1}  \dto^{}  \rto^{\mathfrak{g}}  & \overline{M}   \rto^{}         \dto^{}  &  M\dto^{}\\
P^n(r)  \rto^{}                                  &  \ast            \rto^{}                 & P^{n+1}(r), 
\enddiagram
\] 
where the induced map $\varphi_k$ is of degree $k$. 
It follows that the $(n-1)$-skeleton of $X_{M,k}$ is a co-H-space, 
\[
a^2=0, \ \ \ b^2=0,  \ \ \  {\rm and} \ \ \  \langle a\cup b, [X_{M,k}]\rangle=k\langle a\cup b, [M]\rangle=\pm k,
\]
where $a=\varphi_k^\ast (a)\in H^{m}(X_{M,k};\mathbb{Z})$ and $b=\varphi_k^\ast (b)\in H^{n-m}(X_{M,k};\mathbb{Z})$ by abuse of notation. 

For the homotopy cofibration
\[
X_{M,k}\stackrel{\varphi_k}{\larrow} M \stackrel{}{\larrow} P^{n+1}(r),
\] 
the Blakers-Massey Theorem implies that it is a homotopy fibration up to degree $n$. In particular, the map $X_{M,k}\stackrel{\varphi_k}{\larrow}  M$ is $(n-1)$-connected. Since $(a, b)\in H^{m}(M;\mathbb{Z})\times  H^{n-m}(M;\mathbb{Z})$ is a spherical pair, there exist maps
\[
s_1: S^m\larrow M \ \ \ {\rm and} \ \ \  s_2: S^{n-m}\larrow M
\]
such that $s_1^\ast(a)$ and $s_2^\ast(b)$ are generators of $H^{m}(S^m;\mathbb{Z})$ and $H^{n-m}(S^{n-m};\mathbb{Z})$, respectively. By assumption $m$, $n-m\leq n-2$, it follows that the maps $s_1$ and $s_2$ can be lifted through $X_{M,k}\stackrel{\varphi_k}{\larrow} M$ to maps
\[
s_{1,k}: S^m\larrow X_{M,k} \ \ \ {\rm and} \ \ \  s_{2,k}: S^{n-m}\larrow X_{M,k},
\]
and then $s_{1,k}^{\ast}(a)$ and $s_{2,k}^{\ast}(b)$ are generators of $H^{m}(S^m;\mathbb{Z})$ and $H^{n-m}(S^{n-m};\mathbb{Z})$, respectively. 

To summarize, we have showed that the $(n-1)$-skeleton of $X_{M,k}$ is a co-H-space, the attaching map for the top cell of $X_{M,k}$ is divisible by $k$, and $(a, b)\in H^{m}(X_{M,k};\mathbb{Z})\times  H^{n-m}(X_{M,k};\mathbb{Z})$ is a spherical pair with $\langle a\cup b, [X_{M,k}]\rangle=\pm k$. Hence, we can apply Theorem \ref{pdecom-thm} to $X_{M,k}$ and the proposition follows. 
\end{proof}

\subsection{Inertness and non-inertness, II}
\label{subsec: inert2}
We continue our study of inertness and non-inertness in Subsection \ref{subsec: inert1} for $X_{M,k}$ and prove Theorem \ref{QZinert-thm}. 

\begin{proposition}\label{QZinert-prop}
Let $M$ be an $n$-dimensional simply connected Poincar\'{e} Duality complex with a homotopy cofibration
\[
S^{n-1}\stackrel{\mathfrak{g}}{\larrow} S^{m}\vee S^{n-m}\vee C\stackrel{}{\larrow} M
\]
for a space $C$ and $2\leq m\leq n-2$. Suppose that the component of $\mathfrak{g}$ on $S^m\vee S^{n-m}$ is the Whitehead product of the canonical inclusions $S^m\stackrel{}{\larrow} S^m\vee S^{n-m}$ and $S^{n-m}\stackrel{}{\larrow} S^m\vee S^{n-m}$. 
Then the attaching map for the top cell of $X_{M,k}$ is inert if and only if $k=\pm 1$.

In particular, when $k\neq \pm 1, 0$, the attaching map for the top cell of $X_{M,k}$ is rationally inert but not integrally inert, and not locally inert after localization at any prime $p$ that divides $k$.  
\end{proposition}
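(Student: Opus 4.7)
The plan is to reduce the biconditional to an immediate application of Theorems~\ref{Zdecom-thm} and~\ref{Zinert-thm}, and then to handle rational inertness by combining Halperin--Lemaire with a rational equivalence between $X_{M,k}$ and $M$. Since the top cell of $X_{M,k}$ is attached by $k\mathfrak{g}$ and the component of $\mathfrak{g}$ on $S^m\vee S^{n-m}$ is $[i_1,i_2]$, the component of $k\mathfrak{g}$ on that summand is $k[i_1,i_2]$, so $X_{M,k}$ satisfies the hypothesis of Theorem~\ref{Zdecom-thm} and Theorem~\ref{Zinert-thm} gives ``inert iff $k=\pm 1$''. For the ``in particular'' statement with $k\neq\pm 1,0$, failure of integral inertness is the only-if direction of Theorem~\ref{Zinert-thm}, and failure of local inertness at each prime $p\mid k$ is Lemma~\ref{Znoninert-lemma}.

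The new input is rational inertness of $k\mathfrak{g}$, which I would obtain in two steps. First, show that $H^*(M;\mathbb{Q})$ is not generated by a single element: applying Section~\ref{sec: int} in the case $k=1$ to $M$ itself produces a map $r\colon M\to Q\simeq S^m\times S^{n-m}$ of mapping degree one, and pulling back the two spherical generators yields $a\in H^m(M;\mathbb{Q})$ and $b\in H^{n-m}(M;\mathbb{Q})$ with $a^2=0$, $b^2=0$, and $\langle a\cup b,[M]\rangle=\pm 1$. Suppose that $H^*(M;\mathbb{Q})\cong\mathbb{Q}[c]/(c^{N+1})$ with $|c|=d$. If $d$ is odd, then graded commutativity gives $c^2=0$, forcing $M$ to be a rational sphere and contradicting $a\neq 0$ in degree $m<n$. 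If $d$ is even, then $d\mid m$ and $d\mid n-m$, so $a=\lambda c^{m/d}$ with $\lambda\neq 0$; $a^2=0$ then forces $2m>n$, and symmetrically $b^2=0$ forces $2(n-m)>n$, which is impossible. Therefore the Halperin--Lemaire theorem~\cite{HL87} applies and $\mathfrak{g}$ itself is rationally inert.

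To transfer rational inertness from $\mathfrak{g}$ to $k\mathfrak{g}$, I would use the degree-$k$ map $\varphi_k\colon X_{M,k}\to M$ constructed in the proof of Proposition~\ref{Mpprop}, which fits into $\overline{M}\stackrel{j}{\larrow}X_{M,k}\stackrel{\varphi_k}{\larrow}M$ with $\varphi_k\circ j$ equal to the $(n-1)$-skeleton inclusion of $M$. Since $X_{M,k}$ and $M$ share integral homology below degree $n$ while $\varphi_k$ acts as multiplication by $k$ on $H_n$, for $k\neq 0$ the map $\varphi_k$ is a rational homology isomorphism between simply connected spaces, hence a rational homotopy equivalence; given any rational right homotopy inverse $\sigma$ of $\Omega(\varphi_k\circ j)$, the composite $\sigma\circ\Omega\varphi_k$ satisfies $\Omega j\circ(\sigma\circ\Omega\varphi_k)=(\Omega\varphi_k)^{-1}\circ\Omega\varphi_k=\mathrm{id}$ rationally, and so is a rational right homotopy inverse of $\Omega j$, proving that $k\mathfrak{g}$ is rationally inert. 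The main obstacle in this plan is the short cohomological verification ruling out singly-generated $H^*(M;\mathbb{Q})$; every other step is a direct application of earlier machinery or a standard transfer along a rational equivalence.
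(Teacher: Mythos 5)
Your proposal is correct and follows the same skeleton as the paper's proof: both reduce the biconditional to Theorem~\ref{Zinert-thm} applied to $X_{M,k}$ (valid because the component of $k\mathfrak{g}$ on $S^m\vee S^{n-m}$ is $k[i_1,i_2]$), both obtain the failure of integral and local inertness from Lemma~\ref{Znoninert-lemma}, and both use Halperin--Lemaire for rational inertness. Where you diverge is in how Halperin--Lemaire enters. The paper notes that $X_{M,k}$ is rationally homotopy equivalent to $M$, hence is itself a rational Poincar\'e duality complex, and applies Halperin--Lemaire directly to the top-cell attachment $k\mathfrak{g}$ of $X_{M,k}$; you instead apply Halperin--Lemaire to $\mathfrak{g}$ (the attachment of $M$) and then transfer rational inertness along the rational equivalence $\varphi_k\colon X_{M,k}\to M$ via the observation that $\Omega j\circ\sigma\simeq(\Omega\varphi_k)^{-1}$. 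Both are valid; the paper's version avoids the transfer step, while yours has the small advantage of keeping Halperin--Lemaire applied literally to an integral PD complex. You also spell out the algebraic check that $H^*(M;\mathbb{Q})$ is not singly generated (the odd/even case split using $a^2=0$, $b^2=0$, $ab\neq 0$), whereas the paper deduces this more tersely from the existence of the degree-one map $r\colon M\to S^m\times S^{n-m}$; your verification is a correct and useful expansion of what the paper asserts. One cosmetic note: you should make clear that $\Omega j\circ\sigma$ is a rational right inverse of $\Omega\varphi_k$ \emph{before} identifying it with $(\Omega\varphi_k)^{-1}$ (using that $\Omega\varphi_k$ is a rational equivalence, so any right inverse is a two-sided inverse); as written the identification appears by fiat.
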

\begin{proof}
We have seen that $X_{M,k}$ satisfies the condition of Theorem \ref{Zdecom-thm}. So the first statement follows from Theorem \ref{Zinert-thm}. 

For the second statement, by Lemma \ref{Znoninert-lemma} we only need to show that $X_{M,k}$ is rationally inert when $k\neq \pm 1, 0$. To show this, note that $X_{M,k}$ is rationally homotopy equivalent to $X_{M,1}\simeq M$, and then is a rational Poincar\'{e} Duality complex. By a classical result of Halperin-Lemaire \cite{HL87}, the attaching map for the top cell of a rational Poincar\'{e} Duality complex is rationally inert unless its rational cohomology algebra is generated by a single element. However, by Diagram \eqref{CXQdiag} there is a degree one map $M\stackrel{r}{\larrow} S^m\times S^{n-m}$ restricting to the pinch map on the $(n-1)$-skeletons. In particular, $r^\ast: H^\ast(M;\mathbb{Q})\stackrel{}{\larrow} H^\ast(S^m\times S^{n-m};\mathbb{Q})$ is surjective, and then the cohomology algebra $H^\ast(X_{M,k};\mathbb{Q})\cong H^\ast(M;\mathbb{Q})$ is not generated by a single element. Accordingly, the attaching map for the top cell of $X_{M,k}$ is rationally inert. 
\end{proof}

\begin{proof}[Proof of Theorem \ref{QZinert-thm}]
Let $M(n,m,t)=\mathop{\sharp}\limits_{t} ~(S^m\times S^{n-m})$ for each positive integer $t$, and $2\leq m\leq n-2$. Then $X_{M(n,m,t), k}\simeq X_{M(n',m',t'), k'}$ if and only if $(n,m,t, k)= (n',m',t', k')$. The complex $X_{M(n,m,t), k}$ is a capped complex, and is a rational Poincar\'{e} Duality complex if and only if $k\neq 0$. By the result of Halperin-Lemaire \cite{HL87}, any such $X_{M(n,m,t), k}$ is rationally inert. 

When $k\neq \pm 1, 0$, each $X_{M(n,m,t), k}$ satisfies the condition of Proposition \ref{QZinert-prop}, and hence its top-cell attaching map is rationally inert but not integrally inert. Further, when $p~|~k$ for a prime $p$, Proposition \ref{QZinert-prop} again implies that the top-cell attaching map of each $X_{M(n,m,t), k}$ is rationally inert but not locally inert at $p$. 
\end{proof}

\begin{remark}\label{N-remark}
Let $M=M(n,m,t)\sharp N$ with $N$ an $n$-dimensional simply connected Poincar\'{e} Duality complex. By the same proof as Theorem \ref{QZinert-thm}, it can be shown that the top cell attachment of $X_{M,k}$ is rationally inert if $k\neq 0$, is not integrally inert if $k\neq \pm 1, 0$, and is not locally inert at $p$ if $k\neq 0$ and $p~|~k$.  
\end{remark}

\begin{remark}\label{ex-remark}
Note that each complex $M=M(n,m,t)\sharp N$ in Remark \ref{N-remark} satisfies the condition of Theorem \ref{Zdecom-thm}. 
By Lemma \ref{Znoninert-lemma}, if $k\neq 0$ then the numbers of $\mathbb{Z}/p^r$- and $\mathbb{Z}/p^{r+1}$-summands in the cokernel of $\pi_\ast(\overline{X_{M, k}})\stackrel{i_\ast}{\larrow} \pi_\ast(X_{M, k})$ grow exponentially for all odd primes $p$ with $p^r~|~k$ and $r\geq 1$; if $k\neq 0$ and $4~|~k$, then the numbers of $\mathbb{Z}/2^r$- and $\mathbb{Z}/2^{r+1}$-summands in the cokernel of $i_\ast$ grow exponentially for all $2^r~|~k$ with $r\geq 1$; 
 and if $k\neq 0$ and $2~|~k$ but $4~\nmid k$, then the numbers of $\mathbb{Z}/2$-, $\mathbb{Z}/4$-, and $\mathbb{Z}/8$-summands in the cokernel of $i_\ast$ grow exponentially.

Additionally, if $k\neq 0$ and $t\geq 2$, then each $X_{M, k}$ is also rationally hyperbolic. Indeed, by Theorem \ref{Zdecom-thm} there is a rational homotopy equivalence
 \[
   \Omega X_{M, k}\simeq\Omega (S^m\times S^{n-m})\times\Omega \big(C\rtimes \Omega (S^m\times S^{n-m})\big),
 \] 
where $C\simeq  \mathop{\bigvee}\limits_{t-1} ~(S^m\vee S^{n-m})\vee \overline{N}$. By Lemma \ref{semi-lemma}, $(S^m\vee S^{n-m}) \rtimes \Omega (S^m\times S^{n-m})$ retracts off $C\rtimes \Omega (S^m\times S^{n-m})$, and therefore $\Omega (S^m\vee S^{n-m})$ retracts off $\Omega X_{M, k}$. Then since $S^m\vee S^{n-m}$ is rationally hyperbolic, so is $X_{M, k}$. 
\end{remark}

\subsection{A refinement, II}
\label{subsec: refine2}
We continue our study in Subsection \ref{subsec: refine1} for $X_{M,k}$, and prove Theorem \ref{Mp-thm} together with its analogue, Theorem \ref{Wp-thm}.   
Recall $\mathcal{M}$ is the class of spaces homotopy equivalent to a finite-type wedge of simply connected spheres and Moore spaces. Note that $\overline{X_{M, k}}\simeq \overline{M}$.  
 
 \begin{lemma}\label{pMlemma}
Let $M$ be an $(l-1)$-connected Poincar\'{e} Duality complex of dimension $n\leq 3l-2$ with $l\geq 3$. Then after localization away from any prime $p$ satisfying $2p\leq n-2l+3$, $\overline{X_{M, k}}\simeq \overline{M}\in \mathcal{M}$. 
\end{lemma}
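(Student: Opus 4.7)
The plan is to observe first that by the cofibre construction of $X_{M,k}$ as the mapping cone of $k\mathfrak{g}: S^{n-1} \to \overline{M}$, the $(n-1)$-skeleton $\overline{X_{M,k}}$ coincides with $\overline{M}$, reducing the lemma to showing $\overline{M} \in \mathcal{M}$ after the stated localization.

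Since $M$ is $(l-1)$-connected Poincar\'{e} dual of dimension $n$, Poincar\'{e} duality combined with $(l-1)$-connectivity forces $H_i(M;\mathbb{Z}) = 0$ for $n-l < i < n$. Consequently $\overline{M}$ has the homotopy type of a minimal CW complex with cells only in dimensions $l$ through $n-l$. With the hypothesis $n \leq 3l-2$, this shows $\overline{M}$ is $(l-1)$-connected of dimension at most $2l-2$, which places $\overline{M}$ in the stable range $\dim \leq 2\cdot\mathrm{conn}$.

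The main argument is an induction on the skeletal dimension $i$, showing each $\overline{M}^{(i)} \in \mathcal{M}$ after localization away from primes $p$ satisfying $2p \leq n-2l+3$. The base case $i=l$ is immediate from Hurewicz: $\overline{M}^{(l)}$ is a wedge of $l$-spheres. For the inductive step, the attaching map of an $(i+1)$-cell is a class $\alpha \in \pi_i(\overline{M}^{(i)})$ with $i \leq n-l-1 \leq 2l-3$. Applying the Hilton--Milnor splitting of $\Omega \overline{M}^{(i)}$ (together with its Cohen--Moore--Neisendorfer analogue for Moore space wedge summands), iterated Whitehead product contributions lie in degrees $\geq 2l-1 > i$ and hence do not contribute; thus $\alpha$ decomposes as a direct sum of components on each wedge summand. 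For each sphere summand $S^j$ with $l \leq j \leq i$, the component lives in $\pi_i(S^j) \cong \pi_{i-j}^{S}$ (stable since $j \geq l$ and $i \leq 2j-2$), with $0 \leq i-j \leq n-2l-1$. By Serre's theorem, the first $p$-torsion in $\pi_k^{S}$ occurs at $k = 2p-3$, so after localizing away from primes $p$ with $2p \leq n-2l+3$, every relevant stable group in degrees $1 \leq k \leq n-2l-1$ vanishes; the Moore space components are handled analogously via Cohen--Moore--Neisendorfer. The only surviving components of $\alpha$ are thus degree maps $S^i \to S^i$ into sphere summands or into the top cells of Moore space summands, and a basis change in $\overline{M}^{(i)}$ aggregates these attachments into a wedge of $S^{i+1}$'s (trivial components) and a single Moore space $P^{i+1}(T_{i+1})$ capturing the torsion in $H_{i+1}(\overline{M};\mathbb{Z})$. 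Therefore $\overline{M}^{(i+1)} \in \mathcal{M}$, and the induction completes at $i = n-l$.

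The main obstacle is the rigorous Cohen--Moore--Neisendorfer analysis of attaching map components landing in wedges of Moore spaces, together with the careful basis-change at each inductive step to consolidate multiple cell attachments into a standard Moore space wedge summand. These calculations are standard in the metastable range, but they must be performed uniformly over all primes retained after the localization, and one must verify that the basis changes at successive stages are compatible with the previously-constructed wedge decomposition.
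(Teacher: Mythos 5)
The paper's proof is a two-step verification followed by a citation: $\overline{X_{M,k}} = \overline{M}$ by construction, $\overline{M}$ is $(l-1)$-connected of dimension $n-l \leq 2l-2$ (so $l \geq 3$ puts it in the metastable range), and then \cite[Lemma 7.8]{ST25} gives $\overline{M}\in\mathcal{M}$ after localizing away from primes $p$ with $2p\leq (n-l)-l+3 = n-2l+3$. Your proposal instead attempts to reprove that cited splitting result from scratch by skeletal induction using Hilton--Milnor, Cohen--Moore--Neisendorfer decompositions, and Serre's theorem on the first $p$-torsion in stable stems. This is a genuinely different route: you derive the structure directly rather than outsourcing it. Your sphere-summand analysis is correct (for $l\leq j< i\leq n-l-1$ one has $\pi_i(S^j)\cong\pi^S_{i-j}$ with $1\leq i-j\leq n-2l-1<2p-3$ for retained primes, so these groups vanish after localization), and the overall strategy is the standard one. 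But the step you yourself flag as the obstacle is a real gap: to finish you must show $\pi_i(P^{j'}(p^r))$ vanishes after the stated localization for each Moore summand with $l+1\leq j'\leq i$ and each retained prime $p$. Since such a Moore space is $p$-locally trivial after inverting $p$, this reduces to checking $\pi_i(P^{j'}(p^r))_{(p)}=0$, which does follow from the CMN loop-space decompositions (treated separately for $j'$ even and odd) together with the inequalities $i<j'+2p-4$ and $i<2j'-2$ that come from $n\leq 3l-2$ and $2p\geq n-2l+4$ --- but this must actually be written out. You also need to carry out the Smith-normal-form basis change consolidating the surviving degree-map components into wedge summands $P^{i+1}(\cdot)$, and verify compatibility with the inductive wedge decomposition. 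None of these steps is likely to fail, but filling them in amounts to re-deriving \cite[Lemma 7.8]{ST25}, which the paper sensibly cites instead.
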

\begin{proof}
By Poincar\'{e} Duality and the assumption, the $(n-1)$-skeleton $\overline{M}$ of $M$ is $(l-1)$-connected and of dimension $n-l$. Since $n-l\leq 2l-2$ and $l\geq 3$, by \cite[Lemma 7.8]{ST25}, $\overline{X_{M, k}}\simeq \overline{M}\in \mathcal{M}$ after localization away from any prime $p$ satisfying $2p\leq (n-l)-l+3$. 
\end{proof}

 \begin{proof}[Proof of Theorem \ref{Mp-thm}]
Localize away from any prime $p$ satisfying $2p<{\rm max}(m, n-m)+4$. 
 Note that 
 \[
 ({\rm max}(m, n-m)+4)-(n-2l+4)={\rm max}(m-n+2l, 2l-m)\geq {\rm max}(m-l+2, 3l-n)\geq 2.
 \] 
 Then Lemma \ref{pMlemma} can apply to show that $\overline{X_{M, k}}\simeq \overline{M}\in \mathcal{M}$. Since $\langle a\cup b, [M]\rangle=\pm 1$, the classes $a$ and $b$ are generators of infinite order, and then there exist maps 
\[
s_1: S^m\larrow \overline{M} \larrow M \ \ \ {\rm and} \ \ \  s_2: S^{n-m} \larrow \overline{M}\larrow M
\]
such that 
$s_1^\ast(a)$ and $s_2^\ast(b)$ are generators of $H^{m}(S^m;\mathbb{Z})$ and $H^{n-m}(S^{n-m};\mathbb{Z})$, respectively. Combining with the assumption that $a^2=0$ and $b^2=0$, it follows that $(a, b)$ is a spherical pair of $M$. Then the theorem follows from Proposition \ref{Mpprop}, together with the argument used in the proof of Proposition \ref{ref-prop} to identify the homotopy equivalence for $C$.
 \end{proof}
  
A result analogous to Theorem \ref{Mp-thm} can be obtained after killing the torsion in the homology of $M$. 
Let $\mathcal{W}$ denote the following class of spaces. 
A space $Y$ belongs to $\mathcal{W}$ if and only if $Y$ is homotopy equivalent to a finite-type wedge of simply connected spheres.

\begin{lemma}\label{pWlemma}
Let $M$ be an $(l-1)$-connected Poincar\'{e} Duality complex of dimension $n\leq 3l-1$ with $l\geq 2$. Then after localization away from any prime $p$ appearing as $p$-torsion of $H_\ast(M;\mathbb{Z})$ and any prime $p$ satisfying $2p\leq n-2l+3$, $\overline{X_{M, k}}\simeq \overline{M}\in \mathcal{W}$. 
\end{lemma}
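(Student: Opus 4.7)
The strategy mirrors the proof of Lemma \ref{pMlemma}. First, observe that $\overline{X_{M,k}} \simeq \overline{M}$, as only the top-cell attaching map of $M$ is altered in passing from $M$ to $X_{M,k}$. By Poincar\'{e} duality applied to the $(l-1)$-connected $n$-dimensional complex $M$, the $(n-1)$-skeleton $\overline{M}$ is itself $(l-1)$-connected of dimension at most $n-l \leq 2l-1$, so it lies in the metastable range.

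Next, I would invert the $p$-torsion primes of $H_\ast(M;\mathbb{Z})$, so that the local homology $H_\ast(\overline{M})$ becomes free and concentrated in degrees $[l, 2l-1]$. The plan is then to invoke an analogue of \cite[Lemma 7.8]{ST25} (or the argument underlying it) adapted to the torsion-free setting: the same dimension-by-dimension cell attachment argument, applied now to a complex with torsion-free local homology, produces a splitting into a wedge of spheres rather than a wedge of spheres and Moore spaces. The slightly weaker dimension hypothesis $n \leq 3l-1$, compared with $n \leq 3l-2$ in Lemma \ref{pMlemma}, is what is gained from killing the torsion: the obstruction to splitting off a generator in top degree $2l-1$ is precisely the class that could otherwise create a Moore summand, and it vanishes once the torsion primes have been inverted. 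The additional prime bound $2p \leq n-2l+3$ is exactly what removes the remaining obstructions coming from the unstable homotopy groups of wedges of spheres in dimensions $[l, 2l-2]$, via Hilton-Milnor combined with Serre-type estimates on the $p$-primary parts of $\pi_\ast(S^j)$.

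Concretely, one proceeds by induction on cell dimension: for each $i$ with $l \leq i \leq 2l-1$, one picks $\mathbb{Z}_{(p)}$-generators of $H_i(\overline{M})$, realizes them by maps $S^i \to \overline{M}$, and extends the resulting map from a wedge of spheres inductively across the skeletal filtration; the relevant extension obstructions live in $\pi_{i-1}$ of a wedge of spheres of dimensions in $[l,\,i-1]$.

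The main obstacle will be the quantitative matching between the connectivity, dimension, and prime bounds, namely verifying that for every pair $(i,j)$ with $l+1 \leq i \leq 2l-1$ and $l \leq j \leq i-1$, the relevant $p$-primary torsion in $\pi_{i-1}(S^j)$ is killed by the condition $2p \leq n-2l+3$. This is the same bookkeeping step that appears in the proof of \cite[Lemma 7.8]{ST25}, and it should carry over essentially verbatim once one records that the additional inversion of torsion primes eliminates the Moore-space contribution entirely.
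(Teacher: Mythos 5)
Your proposal is correct in spirit and lands on essentially the same idea as the paper, but it does more work than necessary. The paper's proof is two sentences: after observing that $\overline{X_{M,k}}\simeq\overline{M}$ and that Poincar\'{e} duality makes $\overline{M}$ an $(l-1)$-connected complex of dimension $n-l\leq 2l-1$, it simply cites \cite[Lemma 7.7]{ST25}, which is exactly the torsion-free statement you set out to re-derive (not an ``analogue of Lemma 7.8'' that needs to be constructed). Stanton--Theriault prove both the wedge-of-spheres version (Lemma 7.7, range $\le 2l-1$) and the spheres-and-Moore-spaces version (Lemma 7.8, range $\le 2l-2$) separately, so there is no need to redo the inductive cell-attachment and obstruction bookkeeping you sketch---the black box already exists. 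Your observation that inverting the torsion primes is what permits the extra degree $n\leq 3l-1$ (versus $n\leq 3l-2$ in Lemma \ref{pMlemma}) correctly reflects the discrepancy between the two ST25 lemmas, but the specific mechanism you propose for why that extra degree is available is speculative and not needed once the right reference is invoked; it would be cleaner to just cite it. Also note that ST25's Lemma 7.7 requires inverting primes $p$ with $2p\leq d-c+3$ where $d$ is the dimension and $c$ the connectivity; plugging in $d=n-l$, $c=l$ gives $2p\leq n-2l+3$, which exactly matches the bound in the statement---your ``bookkeeping'' concern is resolved at once by this substitution.
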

\begin{proof}
By Poincar\'{e} Duality and the assumption, the $(n-1)$-skeleton $\overline{M}$ of $M$ is $(l-1)$-connected and of dimension $n-l$. Since $n-l\leq 2l-1$ and $l\geq 2$, by \cite[Lemma 7.7]{ST25}, $\overline{X_{M, k}}\simeq \overline{M}\in \mathcal{W}$ after localization away from any prime $p$ appearing as $p$-torsion of $H_\ast(\overline{M};\mathbb{Z})$ and any prime $p$ satisfying $2p\leq (n-l)-l+3$.
\end{proof}

 \begin{theorem}\label{Wp-thm}
 Let $M$ be an $(l-1)$-connected Poincar\'{e} Duality complex of dimension $n\leq 3l-1$ with $l\geq 2$. Let $d_i$ be the $i$-th Betti number of $M$. 
 Suppose that there exists $(a, b)\in H^{m}(M;\mathbb{Z})\times  H^{n-m}(M;\mathbb{Z})$ such that $l\leq m\leq n-l$, $a^2=0$, $b^2=0$ and $\langle a\cup b, [M]\rangle=\pm 1$. Then after localization away from any prime $p$ appearing as $p$-torsion of $H_\ast(M;\mathbb{Z})$ and any prime $p$ satisfying $2p<{\rm max}(m, n-m)+4$, for any $k\in \mathbb{Z}\backslash \{0\}$ the following hold:
\begin{itemize}
\item there is a homotopy cofibration
\[
S^{n-1} \larrow S^m\vee S^{n-m}\vee C\stackrel{}{\larrow} X_{M,k},
\]
where
\[
C\simeq \Big(\mathop{\bigvee}\limits_{d_{m}-1} S^{m}\Big)\vee \Big(\mathop{\bigvee}\limits_{d_{n-m}-1} S^{n-m}\Big) \vee \Big(\mathop{\bigvee}\limits_{\substack{i\neq m, n-m \\ l \leq i \leq n-l}} \mathop{\bigvee}\limits_{d_i} S^{i} \Big); 
\]
\item
there is a homotopy fibration
\[
(P^{n}(k)\vee C)\rtimes \Omega (S^m\times S^{n-m})\stackrel{}{\larrow} X_{M,k}\stackrel{}{\larrow} S^m\times S^{n-m};
\]
\item the homotopy fibration splits after looping to give a homotopy equivalence 
   \[
   \Omega X_{M,k}\simeq\Omega (S^m\times S^{n-m})\times\Omega \big((P^{n}(k)\vee C)\rtimes \Omega (S^m\times S^{n-m})\big). 
 \] 
 \end{itemize}
  \end{theorem}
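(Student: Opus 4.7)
The plan is to mirror the proof of Theorem \ref{Mp-thm}, substituting the wedge-of-spheres class $\mathcal{W}$ for $\mathcal{M}$ and invoking Lemma \ref{pWlemma} in place of Lemma \ref{pMlemma}. First I would localize away from every torsion prime of $H_\ast(M;\mathbb{Z})$ together with every prime $p$ satisfying $2p<\max(m,n-m)+4$. A short numerical check shows this localization dominates the condition $2p\le n-2l+3$ required by Lemma \ref{pWlemma}: the hypothesis $l\le m\le n-l$ yields $\max(m,n-m)\ge l$, so $n\le 3l-1$ gives
\[
n-2l+3 \le l+2 \le \max(m,n-m)+2 < \max(m,n-m)+4.
\]
With this reconciliation, Lemma \ref{pWlemma} identifies $\overline{X_{M,k}}\simeq\overline{M}$ with a finite-type wedge of simply connected spheres supported in degrees between $l$ and $n-l$.

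Second, since $\langle a\cup b,[M]\rangle=\pm 1$, both classes $a$ and $b$ have infinite order. I would then extract spherical wedge summands $S^m$ and $S^{n-m}$ of $\overline{M}$ representing their Kronecker duals and compose with the skeleton inclusion $\overline{M}\to M$ to produce maps $s_1:S^m\to M$ and $s_2:S^{n-m}\to M$ with $s_1^\ast(a)$ and $s_2^\ast(b)$ generators of the corresponding spherical cohomology groups. Combined with the hypotheses $a^2=0$ and $b^2=0$, this exhibits $(a,b)$ as a spherical pair of $M$, and Proposition \ref{Mpprop} then delivers the homotopy cofibration, the homotopy fibration, and the looped splitting claimed in the theorem.

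The final step is to pin down the homotopy type of $C$. Since $\mathcal{W}$ is closed under retracts by \cite[Theorem 3.5]{Sta25} and $\overline{M}\in\mathcal{W}$, the complementary summand $C$ built inside Proposition \ref{Mpprop} is again a wedge of simply connected spheres, so its homotopy type is determined by its homology. Reading off $H_\ast(\overline{M};\mathbb{Z})$ from the Betti numbers $d_i$ of $M$ (no torsion survives after localization) and subtracting the two summands $S^m$ and $S^{n-m}$ already accounted for yields exactly the wedge formula in the statement, with no Moore space contributions since all torsion primes have been inverted. The main subtlety of the whole argument is the numerical inequality reconciling the two localization hypotheses; everything else is a direct adaptation of the proof of Theorem \ref{Mp-thm}, and no new homotopy-theoretic input is required.
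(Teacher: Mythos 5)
Your proposal matches the paper's proof in structure and substance: both localize as stated, reconcile the localization hypotheses numerically (your chain $n-2l+3\le l+2\le\max(m,n-m)+2$ and the paper's computation $(\max(m,n-m)+4)-(n-2l+4)\ge 1$ reach the same conclusion), invoke Lemma \ref{pWlemma} to place $\overline{X_{M,k}}\simeq\overline{M}$ in $\mathcal{W}$, exhibit $(a,b)$ as a spherical pair, and conclude via Proposition \ref{Mpprop} together with the retract argument (as in Proposition \ref{ref-prop}) to identify $C$ as the stated wedge of spheres. The only cosmetic difference is that you appeal directly to closure of $\mathcal{W}$ under retracts, whereas the paper routes through $\mathcal{M}$ and then observes no Moore summands survive after inverting torsion; the conclusion is the same either way.
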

 \begin{proof}
Localize away from any prime $p$ appearing as $p$-torsion of $H_\ast(M;\mathbb{Z})$ and any prime $p$ satisfying $2p<{\rm max}(m, n-m)+4$.  
 Note that 
 \[
 ({\rm max}(m, n-m)+4)-(n-2l+4)={\rm max}(m-n+2l, 2l-m)\geq {\rm max}(m-l+1, 3l-n)\geq 1.
 \] 
 Then Lemma \ref{pWlemma} can apply to show that $\overline{X_{M, k}}\simeq \overline{M}\in \mathcal{W}$. Since $\langle a\cup b, [M]\rangle=\pm 1$, the classes $a$ and $b$ are generators of infinite order, and then there exist maps 
\[
s_1: S^m\larrow \overline{M} \larrow M \ \ \ {\rm and} \ \ \  s_2: S^{n-m} \larrow \overline{M}\larrow M
\]
such that 
$s_1^\ast(a)$ and $s_2^\ast(b)$ are generators of $H^{m}(S^m;\mathbb{Z})$ and $H^{n-m}(S^{n-m};\mathbb{Z})$, respectively. Combining with the assumption that $a^2=0$ and $b^2=0$, it follows that $(a, b)$ is a spherical pair of $M$. Then the theorem follows from Proposition \ref{Mpprop}, together with the argument used in the proof of Proposition \ref{ref-prop} to identify the homotopy equivalence for $C$.
 \end{proof}
 
When $k=1$, Stanton-Theriault \cite[Theorem 1.4]{ST25} and Basu-Basu \cite{BB19} proved loop space decompositions by different methods. In particular, Theorem \ref{Wp-thm} can be viewed as a generalization of their results. 
 
\begin{remark}
Localize away from any prime $p$ appearing as $p$-torsion of $H_\ast(M;\mathbb{Z})$ and any prime $p$ satisfying $2p<{\rm max}(m, n-m)+4$. 
For the complex $X_{M,k}$ with $k\neq 0$ in Theorem \ref{Wp-thm}, by the same argument used in the proof of \cite[Theorem 8.4]{ST25}, we can deduce from Theorem \ref{loopH-thm} that there is an isomorphism of Hopf algebras
\[
H_\ast(\Omega X_{M,k}) \cong TV/(kI) \cong T\Big(\mathop{\bigoplus}\limits_{i=l}^{n-l} V_i\Big)/(kI), 
\]
where the free local module $V_i$ is of degree $i$ and of rank $d_i$, and $I$ is a sum of monomials in the generators of $V$. Moreover, if $n\geq 3l-2$, $I$ is quadratic by \cite[Remark 8.5]{ST25}.
\end{remark}

\begin{remark}\label{pair-remark}
In Theorems \ref{Mp-thm} and \ref{Wp-thm}, suppose that $M$ satisfies one of the following:
\begin{itemize}
\item $n$ is even and $H_{\rm odd}(M;\mathbb{Q})$ is not trivial; or 
\item $n$ is odd and $H_{\rm odd<\frac{n+1}{2}}(M;\mathbb{Q})$ is not trivial.
\end{itemize}
Then by Ponicar\'{e} Duality there exists a pair $(a,b)$ satisfying the hypothesis of theorem. 
 \end{remark}


\bibliographystyle{amsalpha}

\end{document}